 \newcommand{\C}{\mathbb{C}}
\newcommand{\N}{\mathbb{N}}
 \newcommand{\Q}{\mathbb{Q}}
 \newcommand{\R}{\mathbb{R}}
 \newcommand{\Z}{\mathbb{Z}}
\newcommand{\cC}{\mathcal{C}}
\newcommand{\cI}{\mathcal{I}}
\newcommand{\cO}{\mathcal{O}}
\renewcommand{\a}{\alpha}
\renewcommand{\b}{\beta}
\renewcommand{\d}{\delta}
\newcommand{\e}{\varepsilon}
\newcommand{\f}{\varphi}
\newcommand{\g}{\gamma}
\newcommand{\la}{\lambda}
\newcommand{\om}{\omega}
\newcommand{\Om}{\Omega}
\newcommand{\p}{\psi}
\newcommand{\ie}{{\rm i.e.\ }}
\newcommand{\winter}{\wedge\dots\wedge}
\newcommand{\hto}{\hookrightarrow}
\newcommand{\ddc}{dd^c}
\newcommand{\BC}{\mathrm{BC}}
\DeclareMathOperator{\env}{P}
\newcommand{\lvol}{\underline{\mathrm{vol}}}
\newcommand{\uvol}{\overline{\mathrm{vol}}}
\DeclareMathOperator{\one}{\mathbf{1}}
\DeclareMathOperator{\supp}{supp}
\DeclareMathOperator{\vol}{vol}
\DeclareMathOperator{\PSH}{PSH}
\newcommand{\dbar}{\overline{\partial}}
\renewcommand{\div}{\mathrm{div}}
\newcommand{\D}{\Delta}
\newcommand{\HBC}{H_{\mathrm{BC}}}
\numberwithin{equation}{section}       
\newtheorem{prop} {Proposition} [section]
\newtheorem{thm}[prop] {Theorem} 
\newtheorem{defi}[prop] {Definition}
\newtheorem{lem}[prop] {Lemma}
\newtheorem{cor}[prop]{Corollary}
\newtheorem{prop-def}[prop]{Proposition-Definition}
\newtheorem*{thmA}{Theorem A} 
\newtheorem*{thmB}{Theorem B} 
\newtheorem*{thmC}{Theorem C} 
\newtheorem*{thmD}{Theorem D}
\newtheorem{exam}[prop]{Example}
\newtheorem{rmk}[prop]{Remark}
\theoremstyle{remark}
\newtheorem*{ackn}{Acknowledgment}
\title[Volumes of Bott--Chern classes]{Volumes of Bott--Chern classes}
\date{\today}
\author{S{\'e}bastien Boucksom \and Vincent Guedj \and Chinh H. Lu}
\address{Sorbonne Universit\'e and Universit\'e Paris Cit\'e\\
CNRS\\
IMJ-PRG\\
F-75005 Paris\\
France}
\email{sebastien.boucksom@imj-prg.fr}
\address{Institut Universitaire de France \& Institut de Mathématiques de Toulouse\\
  Université de Toulouse; CNRS, UPS\\
  118 route de Narbonne, F-31400 Toulouse\\
  France}
\email{vincent.guedj@math.univ-toulouse.fr}
\address{Univ Angers, CNRS, LAREMA, SFR MATHSTIC\\
  2 Bd de Lavoisier, 49000 Angers\\
  France}
\email{hoangchinh.lu@univ-angers.fr}
\subjclass[2010]{32W20, 32U05, 32Q15, 35A23}
\begin{document}

\begin{abstract} 
We study the volumes of transcendental and possibly non-closed Bott--Chern $(1,1)$-classes on
an arbitrary compact complex manifold $X$. We show that the latter belongs to the 
class $\cC$ of Fujiki if and only if it has {\it the bounded mass property}
-i.e. its Monge-Amp\`ere volumes have a uniform upper-bound- and there exists a closed Bott--Chern class with positive volume. 
This yields a positive answer to a conjecture of Demailly--P\u{a}un--Boucksom. 
To this end we extend to the hermitian context the notion of non-pluripolar products of currents, 
allowing for the latter to be merely  {\it quasi-closed} and {\it quasi-positive}.
We establish a  quasi-monotonicity property of Monge-Amp\`ere masses, and
moreover show the existence of solutions to degenerate complex Monge-Amp\`ere equations in
big classes, together with uniform a priori estimates. This extends to the hermitian context 
fundamental results of Boucksom--Eyssidieux--Guedj--Zeriahi.
\end{abstract}

\maketitle

\setcounter{tocdepth}{1}
\tableofcontents
%
%
%
%
\section*{Introduction}
%

Complex Monge-Amp\`ere equations are powerful tools in complex geometry, as demonstrated in foundational works ranging from Yau’s resolution of the Calabi conjecture \cite{Yau78} to the construction of singular canonical metrics by Eyssidieux-Guedj-Zeriahi \cite{EGZ09}. 
 As illustrated in \cite{BEGZ10}, the Bedford-Taylor pluripotential theory \cite{BT76,BT87,Kol98} is highly robust, extendable to the broad context of big cohomology classes. This extension has led to many profound applications in K\"ahler geometry over the past decade.
 
  Following Yau's celebrated work, the Hermitian Monge-Amp\`ere equation was explored by Cherrier \cite{Cher87}, who solved it in some particular cases. The general case solved by  Tosatti and Weinkove \cite{TW10} twenty years later underscores the difficulty of translating techniques from the K\"ahler setting to the Hermitian context. 
  In recent years, Hermitian pluripotential theory has become central to the advancement of hermitian geometry, as evidenced by 
  \cite{DK12, KN19, GL22,GL23}. This discussion underlines the necessity of extending the results of \cite{BEGZ10} to the Hermitian context, which is the primary objective of our paper.

\smallskip

We start by extending the non-pluripolar products of currents, allowing for the latter to be merely 
{\it quasi-closed} and {\it quasi-positive}.

\begin{thmA}
Let $(X,\omega_X)$ be a compact Hermitian manifold of dimension $n$  such that $\uvol(\omega_X)<+\infty$. 
For any $p=1,\dots,n$, there exists a unique symmetric, multilinear pairing 
$$
(T_1,\dots,T_p)\mapsto  T_1\winter T_p 
$$
defined on $p$-tuples of quasi-closed, quasi-positive currents and with values in nonpluripolar $(p,p)$-currents, such that:
\begin{itemize}
\item[(i)] if the $T_i$'s have bounded potentials, then the pairing is the Bedford-Taylor Monge-Amp\`ere product; 
\item[(ii)] for any tuple of $(1,1)$-forms $\theta_1,\dots,\theta_p$, the operator 
$$
(\f_1,\dots,\f_p)\mapsto \bigwedge_i(\theta_i+ \ddc\f_i), 
$$
defined on all tuples of quasi-psh functions, is local in the plurifine topology. 
\end{itemize}
Moreover, 
$
T_1,\dots,T_p\ge 0\Longrightarrow  T_1\winter T_p \geq 0. 
$
\end{thmA}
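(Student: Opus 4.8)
The plan is to construct the pairing by the standard "capacity/plurifine localization" recipe adapted to the hermitian, non-closed setting, then verify uniqueness. Write $T_i = \theta_i + \ddc\f_i$ with $\f_i$ quasi-psh and $\theta_i$ a smooth (not necessarily closed) form; quasi-positivity of $T_i$ means $\theta_i+\ddc\f_i\ge -C\om_X$ for some constant, and quasi-closedness means $d\theta_i$ is controlled by $\om_X$ in a suitable sense. First I would define, on the plurifine-open set where all the $\f_i$ are locally bounded, the truncated product by Bedford--Taylor:
\[
\one_{\bigcap_k\{\f_k>-j\}}\;\bigwedge_i\bigl(\theta_i+\ddc\max(\f_i,-j)\bigr).
\]
By the plurifine locality of the Bedford--Taylor operator (which holds verbatim for non-closed forms, since it is a purely local statement about the $\ddc$ of bounded psh functions) these truncations are consistent as $j\to\infty$, so they glue to a well-defined positive measure/current on $\{\f_k>-\infty\ \forall k\}$; extend by zero across the pluripolar set $\bigcup_k\{\f_k=-\infty\}$. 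The resulting current puts no mass on pluripolar sets by construction, giving property~(i) and half of property~(ii) immediately, as well as positivity when all $T_i\ge 0$ (each truncated term is then a genuine positive Bedford--Taylor product).

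Next I would promote this to a multilinear \emph{pairing} on tuples of quasi-closed quasi-positive currents, not just those of the form $\theta+\ddc\f$ with $\theta$ smooth. The point is that every quasi-closed quasi-positive $(1,1)$-current $T$ can be written $T = \theta_T + \ddc\f_T$ where $\theta_T$ is a fixed smooth representative of (a substitute for) its Bott--Chern class and $\f_T$ is quasi-psh; one checks the construction above is independent of the chosen smooth representative $\theta_T$, because changing $\theta_T$ by $\ddc u$ with $u$ smooth changes $\f_T$ by $-u$ and leaves each truncated Bedford--Taylor product unchanged on the bounded locus (again by plurifine locality). Symmetry and multilinearity are inherited from the corresponding properties of the Bedford--Taylor product on the bounded loci, after a routine check that the relevant plurifine-open sets match up; linearity in each slot requires writing $\theta_i+\ddc(\f_i+\f_i')$ and comparing truncations, using that $\max(\f+\f',-j)$ and $\max(\f,-j)+\max(\f',-j)$ agree on a plurifine-open set whose complement is negligible.

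For uniqueness, suppose $(T_1,\dots,T_p)\mapsto T_1\winter T_p$ is any pairing with properties~(i)--(ii). Fix the tuple and its representatives $\theta_i+\ddc\f_i$. Property~(ii) says the operator is plurifine-local in the $\f_i$; applying this with $\f_i$ replaced by $\max(\f_i,-j)$ on the plurifine-open set $\bigcap_k\{\f_k>-j\}$ forces the pairing to agree there with $\bigwedge_i(\theta_i+\ddc\max(\f_i,-j))$, which by property~(i) is the Bedford--Taylor product. Letting $j\to\infty$ and using that the pairing takes values in non-pluripolar currents (hence is determined by its restriction to $\{\f_k>-\infty\ \forall k\}$) pins it down to the current constructed above.

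\textbf{Main obstacle.} The hard part will be establishing that the glued truncations define a current with \emph{locally finite mass} — i.e.\ that the monotone limit over $j$ does not blow up — in the non-closed setting, where one cannot integrate by parts against a global closed form to get a cohomological mass bound. I expect this to be handled by a local Chern--Levine--Nirenberg type estimate: on a small coordinate ball one bounds $\int \chi\,\bigwedge_i(\theta_i+\ddc\max(\f_i,-j))$ by $C\prod_i\|\max(\f_i,-j)\|_{L^\infty(\supp\chi')}$ using that $\theta_i$ and $d\theta_i$ are bounded there, together with the quasi-positivity normalization; the error terms coming from $d\theta_i\ne 0$ are absorbed because they involve at most $p-1$ factors of $\ddc(\cdot)$ and are therefore controlled by the same induction. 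This is exactly the place where the finiteness hypothesis $\uvol(\om_X)<\infty$ and quasi-closedness enter, and where the argument genuinely departs from \cite{BEGZ10}.
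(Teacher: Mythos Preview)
Your construction via truncation and plurifine gluing is essentially the paper's approach, and your uniqueness argument is correct. However, the mass bound --- which you correctly flag as the crux --- is handled by the wrong mechanism.

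A local Chern--Levine--Nirenberg estimate of the form you describe will bound $\int \chi\,\bigwedge_i(\theta_i+\ddc\max(\f_i,-j))$ by a constant times a product of $L^\infty$-norms of the $\max(\f_i,-j)$, and these norms grow like $j$. So the bound blows up as $j\to\infty$, which is useless for showing the glued current has finite mass. The ``error terms'' coming from $d\theta_i\neq 0$ are not the issue; the issue is that even in the closed case the CLN inequality depends on the sup-norm of the potentials, and what saves you there is Stokes' theorem, not CLN.

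The paper's argument is global and uses the hypothesis $\uvol(\om_X)<\infty$ in a direct way: by multilinearity one reduces to the case $\theta_i=\om_X$ and $\f_i\in\PSH(\om_X)$; then setting $\p:=\tfrac1n\sum_i\f_i\in\PSH(\om_X)$ one has
\[
\int \one_K\,T_1\wedge\cdots\wedge T_p\wedge\om_X^{n-p}\ \le\ n^n\int \one_K (\om_X+\ddc\p)^n\ =\ n^n\int \one_K(\om_X+\ddc\max\{\p,-t\})^n
\]
for $t\gg 1$ (by plurifine locality on $K$), and the right-hand side is at most $n^n\uvol(\om_X)$, independent of $K$. This is where the hypothesis actually enters; there is no local substitute.

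A secondary point: your claim that ``each truncated term is then a genuine positive Bedford--Taylor product'' when $T_i\ge 0$ skips a step. Since $\theta_i$ need not be $\ge 0$, the constant $-j$ is not $\theta_i$-psh, so $\theta_i+\ddc\max(\f_i,-j)$ is not obviously a positive current, and the product $\bigwedge_i(\theta_i+\ddc\max(\f_i,-j))$ is a priori only a signed current. The paper establishes positivity on the plurifine open set $\bigcap_i\{\f_i>-j\}$ by locally approximating each $\theta_i$ by a closed form $\theta_i'$ to within $\e\om_X$, writing $\theta_i'+\e\om=\ddc(\tau_i+\e\rho)$ for local potentials, and observing that $\tau_i+\e\rho+\f_i$ is then genuinely psh; one lets $\e\to 0$ at the end.
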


A $(1,1)$-current $T$ is quasi-closed if $dT$ is a smooth form.
It is quasi-positive if $T\geq -A \omega_X$, for some constant $A$. 
When $\theta=\theta_1=...=\theta_n$ and $u=u_1=...=u_n$, we simply write 
$$(\theta+dd^c u)^n=\bigwedge_i(\theta_i+ \ddc u_i).$$ 

Our proof is partially inspired by the work of Dinew-Ko{\l}odziej \cite{DK12}, which is further elaborated in \cite{GL22}. Due to the lack of local potentials of the forms $\theta_i$, we locally add strictly plurisubharmonic functions
to employ Bedford-Taylor’s construction of the Monge-Amp\`ere  measure. Specifically, 
we approximate the unbounded potentials through truncation and then consider the Bedford-Taylor product
\[
\bigwedge_i(\theta_i+ \ddc \max(\f_i,-t)).
\] 

However, this procedure results in currents of order zero with coefficients that are signed measures instead of positive ones, unless
$\theta_i\geq 0$. The key point is that when restricted to the plurifine open set $\cap_{i=1}^p(\f_i >-t)$ these signed measures are,
in fact, positive. 
To ensure our construction functions properly, we critically assume 
 that $X$ has the {\it bounded mass property}, meaning it satisfies
$$
\uvol(\omega_X):=\sup \left\{ \int_X (\omega_X+ \ddc \f)^n \; ; \; 
\f \in {\mathcal C}^{\infty}(X,\omega_X) 
\text{ with } \omega_X+ \ddc \f>0 \right\}<+\infty. 
$$

The condition $\uvol(\omega_X) <+\infty$ is independent of the positive form 
$\omega_X$, and  it is a bimeromorphic invariant \cite[Theorem A]{GL22}. In particular 
$\uvol(\omega_X) <+\infty$ if $X$ belongs to the class ${\mathcal C}$ of Fujiki.
Several conditions have been provided in \cite{AGL23} to ensure the finiteness of $\uvol(\omega_X)$. 
To date, we are not aware of any examples where $\uvol(\omega_X)$ is infinite.

\medskip

Our second main result establishes the following crucial quasi-monotonicity property of Monge-Amp\`ere masses,
which extends \cite{WN19,DDL2} to the hermitian setting.

\begin{thmB}
	Assume $\uvol(\omega_X)<+\infty$, and fix $\theta$  a smooth $(1,1)$-form 
	such that $\pm \theta \leq C \omega_X$.
If $\f,\p\in\PSH(\theta)$ satisfy $\f\le\p+O(1)$, then 
$$
\int_X (\theta+ \ddc\f)^n \le\int_X (\theta+ \ddc\p)^n+\D_\theta,
$$
where 
$
\D_\theta:=\sup_{\f,\p\in\PSH(C\om_X)\cap L^\infty(X)}\left\{\int_X(\theta+ \ddc\f)^n-\int_X(\theta+ \ddc\p)^n\right\}. 
$
\end{thmB}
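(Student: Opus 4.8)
The plan is to prove the inequality first for bounded potentials, where it is a tautology, and then to descend to the general case by a single truncation argument built on Theorem~A.

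\emph{Bounded case.} Suppose $\f,\p\in\PSH(\theta)\cap L^{\infty}(X)$. Since $\pm\theta\le C\om_X$ forces $\PSH(\theta)\subseteq\PSH(C\om_X)$, the pair $(\f,\p)$ is admissible in the supremum defining $\D_\theta$, and therefore $\int_X(\theta+\ddc\f)^n-\int_X(\theta+\ddc\p)^n\le\D_\theta$ at once; note that the hypothesis $\f\le\p+O(1)$ plays no role here. All the content of the statement is thus in the passage to unbounded potentials.

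\emph{Reduction.} As $\ddc(\f-C)=\ddc\f$, we may replace $\f$ by $\f-C$ and assume $\f\le\p$. For $s>0$ set $\f_s:=\max(\f,-s)$ and $\p_s:=\max(\p,-s)$; these lie in $\PSH(C\om_X)\cap L^{\infty}(X)$ but in general \emph{not} in $\PSH(\theta)$, so that $(\theta+\ddc\f_s)^n$ is a priori only a signed measure — one which, however, is positive on the plurifine-open set $\{\f>-s\}$, where by the plurifine locality of Theorem~A it coincides with $(\theta+\ddc\f)^n\ge0$. By the construction of the non-pluripolar product in Theorem~A one has $\int_X(\theta+\ddc\f)^n=\sup_{s>0}\int_{\{\f>-s\}}(\theta+\ddc\f)^n$, whence, using locality and additivity of the signed measure over $X=\{\f>-s\}\sqcup\{\f\le-s\}$,
\[
\int_X(\theta+\ddc\f)^n=\sup_{s>0}\Bigl(\int_X(\theta+\ddc\f_s)^n-(\theta+\ddc\f_s)^n(\{\f\le-s\})\Bigr),
\]
and similarly, since $(\theta+\ddc\p)^n\ge0$, $\ \int_X(\theta+\ddc\p)^n\ge\int_X(\theta+\ddc\p_s)^n-(\theta+\ddc\p_s)^n(\{\p\le-s\})$ for every $s$. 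Feeding in the bounded-case inequality $\int_X(\theta+\ddc\f_s)^n\le\int_X(\theta+\ddc\p_s)^n+\D_\theta$, one sees that the theorem will follow once we establish, for every $s>0$, the comparison of ``escape masses''
\[
(\theta+\ddc\p_s)^n(\{\p\le-s\})\ \le\ (\theta+\ddc\f_s)^n(\{\f\le-s\}).
\]

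\emph{Proof of the escape-mass comparison; the main obstacle.} On the plurifine-open set $\{\p<-s\}\subseteq\{\f<-s\}$ both $\f_s$ and $\p_s$ equal the constant $-s$, so by plurifine locality each side of the displayed inequality restricts there to $\int_{\{\p<-s\}}\theta^{\wedge n}$; the discrepancy is therefore concentrated on the ``new'' region $\{\f<-s\le\p\}$ and on the plurifine level sets $\{\f=-s\}$ and $\{\p=-s\}$, precisely where the truncated products need not be positive. Controlling this discrepancy is the heart of the matter and the main difficulty: it is the Hermitian incarnation of the key estimate underlying the monotonicity theorems of \cite{WN19,DDL2}. The plan is to prove it by approximation from above, replacing near these sets the constant truncation level $-s$ by the \emph{$\theta$-plurisubharmonic} level $\p-r$ — so that one works with genuine elements of $\PSH(\theta)$, hence with honestly \emph{positive} Monge--Amp\`ere products (Theorem~A) — and then to run a comparison/domination argument based on the positivity and plurifine locality of Theorem~A, letting $r\to+\infty$ and absorbing the level-set contributions by a perturbation of the truncation level. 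Throughout, the hypothesis $\uvol(\om_X)<+\infty$ is exactly what makes $\D_\theta$ and all the masses above finite, so that these manipulations are legitimate.
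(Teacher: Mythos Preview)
Your reduction is correct up to the ``escape-mass comparison''
\[
(\theta+\ddc\p_s)^n(\{\p\le -s\})\ \le\ (\theta+\ddc\f_s)^n(\{\f\le -s\}),
\]
but this is exactly where the entire difficulty lies, and you do not prove it. The final paragraph is a plan, not an argument: you propose replacing the flat cutoff $-s$ by a $\theta$-psh level $\p-r$, but you do not explain how to compare the resulting positive measures, nor how to pass to the limit. Two concrete obstacles you have not addressed: (a) the quantities on both sides are \emph{signed}, so even formulating the comparison on the level sets $\{\f=-s\}$, $\{\p=-s\}$ requires care; (b) if you set $\widetilde\f_r:=\max\{\f,\p-r\}$, this need not coincide with $\p-r$ outside any compact set (you only know $\f\le\p$, not $\f\le\p-r$), so you cannot directly invoke a ``bounded / agree near the boundary'' argument.

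The paper sidesteps the signed-measure bookkeeping entirely. It first treats the special case where $\f=\p$ off a compact $K$ on which both are bounded: then for $t\gg1$ the truncations $\f_t,\p_t$ agree off $K$ and one gets $\int_X(\theta+\ddc\f)^n-\int_X(\theta+\ddc\p)^n=\int_X(\theta+\ddc\f_t)^n-\int_X(\theta+\ddc\p_t)^n\le\D_\theta$ by pure plurifine locality. For the general case it introduces $\f_{\e,t}:=\max\{(1+\e)\f,\p-t\}\in\PSH(\theta+C\e\om_X)$; the factor $(1+\e)$ is precisely what forces $\f_{\e,t}=\p-t$ outside a compact (this is the fix for obstacle (b)), so the special case applies to the pair $(\f_{\e,t},\p)$ with $\theta$ replaced by $\theta+C\e\om_X$. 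One then lets $\e\to0$ and $t\to\infty$ using lower semicontinuity of non-pluripolar masses (Proposition~\ref{prop:lscmass}) together with $\D_{\theta+C\e\om_X}=\D_\theta+O(\e)$. Your sketch gestures at the $\p-r$ truncation but misses the $(1+\e)$-trick and the role of lower semicontinuity, which are what make the limit arguments go through.
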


Stokes' theorem shows that $\Delta_{\theta}=0$ if $\theta$ is closed.  In particular,  the total mass of $T^n$ is the same for all currents $T\in \{\theta\}$ with minimal singularities. It is thus natural to set 
\[
\vol(\{\theta\})=\int_X T_{\min}^n. 
\]
This, together with \cite{BEGZ10}, shows that our volume notion is compatible with the one introduced by Boucksom in \cite{B02}. 

The situation is more subtle when $\theta$ is not closed, as positive currents  with minimal singularities do not necessarily have the same Monge-Amp\`ere mass (unless $dd^c \theta=0$ and $dd^c \theta^2=0$, see \cite{GL22} and references therein).
When $\{\theta\}$ is {\it big} we thus introduce
$$
\lvol(\{\theta\}):=\lvol(T_{min})
\; \; \text{ and } \; \; 
\uvol(\{\theta\}):=\uvol(T_{min}),
$$
with
$$
\lvol(T):=\inf_S\int_X S^n,\quad \uvol(T):=\sup_S\int_X S^n, 
$$
where $S$ ranges over all quasi-closed positive $(1,1)$-currents in the same $\ddc$-class as $T$ and with equivalent singularities. 
By definition, the upper and lower volumes of $T$ only depend on its $\ddc$-class and singularity class, and satisfy 
$$
0\le\lvol(T)\le\uvol(T)<\infty, 
$$
as $X$ has the bounded mass property (see Lemma~\ref{lem:bdmass2}). Note also that 
\begin{equation*} 
T>0\Longrightarrow \uvol(T)>0. 
\end{equation*}
The analogous result for $\lvol(T)$ is unclear in general, even when $T$ is smooth (see~\S\ref{sec:posvol}).

\smallskip

We establish in Section \ref{sec:vol} key properties of these  volumes,
and notably show that $\lvol$ is a continuous function (Theorem \ref{thm:bigvol}).
Indeed another motivation of our paper stems from the  Grauert-Riemenschneider conjecture \cite{GR70},  which asks whether 
the existence of a semi-positive holomorphic line bundle $L \rightarrow X$  with $c_1(L)^n>0$
implies that $X$ is Moishezon, i.e. bimeromorphically equivalent to a projective manifold. 
This conjecture has been solved positively by Siu \cite{Siu84}  (see also \cite{Dem85}). 
One can  relax the semi-positivity assumption and replace
the condition $c_1(L)^n>0$ by the positivity of the volume $\vol(L)>0$. Recall that
$\vol(L)$ measures the asymptotic growth of the space of holomorphic sections,
$$
\vol(L)=\limsup_{k \rightarrow +\infty} \frac{n !}{k^n} h^0(X,kL).
$$
Demailly and P\u{a}un have further proposed a transcendental version of this conjecture
  \cite[Conjecture 0.8]{DP04}: given a nef class $\alpha \in H^{1,1}_{BC}(X,\R)$
      with $\alpha^n>0$, they conjectured that $\alpha$ should contain a K\"ahler current; 
       in particular $X$ should belong the  class ${\mathcal C}$.
      Recall that the Bott-Chern cohomology group $H_{BC}^{1,1}(X,\R)$
    is the quotient of   closed real smooth $(1,1)$-forms,
   by the image of  ${\mathcal C}^{\infty}(X,\R)$
   under the $\ddc$-operator.
   The following answer to \cite[Conjecture 0.8]{DP04} 
has been proposed in \cite[Theorem C]{GL22}:
$$
\alpha\text{ contains a K\"ahler current }
\Longleftrightarrow
\uvol(\omega_X) <+\infty.
$$    

The existence of a class $\alpha \in H^{1,1}_{BC}(X,\R)$ that is 
nef and satisfies $\alpha^n>0$ is however not granted on an arbitrary manifold in the Fujiki class
(see \cite{JM22}). It is thus natural to extend the conjecture
of Demailly-P\u{a}un, by removing the nef assumption,
as was proposed by Boucksom in \cite[Conjecture 4.1]{B02}.  

\smallskip

Theorem \ref{thm:bigvol} applied to closed forms provides a characterization of the Fujiki class,
and yields  the following  answer to the Boucksom-Demailly-P\u{a}un conjecture. 

\begin{thmC}
Let $(X,\omega_X)$ be a compact hermitian manifold.
The following  are equivalent:
\begin{enumerate}
\item $X$ belongs to the class ${\mathcal C}$ of Fujiki.
\item $\uvol(\omega_X)<+\infty$ and there exists a   class $\alpha \in H_{BC}^{1,1}(X,\R)$
such that $\vol(\alpha)>0$.
\end{enumerate}
\end{thmC}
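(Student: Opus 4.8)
The plan is to deduce this from the continuity of $\lvol$ (Theorem~\ref{thm:bigvol}) specialized to closed forms, treating the two implications separately. I expect $(1)\Rightarrow(2)$ to be soft: a K\"ahler modification of $X$ produces a K\"ahler current on $X$, whose Bott--Chern class has positive volume. The substantial direction is $(2)\Rightarrow(1)$, where the point is to upgrade the numerical hypothesis $\vol(\alpha)>0$ to the existence of an actual K\"ahler current in $\alpha$; once this is done, $X\in\cC$ by the classical fact that a compact complex manifold carrying a K\"ahler current is bimeromorphic to a K\"ahler manifold (cf.\ \cite{DP04}).

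For $(1)\Rightarrow(2)$, since $X\in\cC$ I would fix a modification $\mu\colon\hat X\to X$ with $\hat X$ compact K\"ahler together with a K\"ahler form $\hat\omega$ on $\hat X$. Finiteness of $\uvol(\omega_X)$ then follows from its bimeromorphic invariance \cite[Theorem~A]{GL22} and the triviality $\uvol(\hat\omega)=\int_{\hat X}\hat\omega^n<\infty$ on the K\"ahler side. For the positive-volume class, set $T_0:=\mu_*\hat\omega$ and $\alpha:=\{T_0\}\in H^{1,1}_{\BC}(X,\R)$: since $\hat\omega\ge\varepsilon\,\mu^*\omega_X$ for small $\varepsilon>0$ by compactness and $\mu_*\mu^*\omega_X=\omega_X$, the closed positive current $T_0$ satisfies $T_0\ge\varepsilon\omega_X$, and it has bounded local potentials --- hence minimal singularities in $\alpha$ --- because $\hat\omega$ does and $\mu$ is a modification. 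As $\alpha$ is closed we have $\D_\theta=0$, so Theorem~B yields that all minimal-singularity representatives of $\alpha$ carry the same Monge-Amp\`ere mass and $\vol(\alpha)=\int_X T_0^n$; and since $\mu$ is an isomorphism over the complement of a pluripolar set carrying no mass of the Bedford--Taylor measure $T_0^n$, one gets $\int_X T_0^n=\int_{\hat X}\hat\omega^n>0$.

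For $(2)\Rightarrow(1)$, let $T_{\min}\in\alpha$ have minimal singularities, so $\vol(\alpha)=\int_X T_{\min}^n>0$. First I would regularize $T_{\min}$ following Demailly to obtain closed currents $T_j\in\alpha$ with analytic singularities and $T_j\ge-\varepsilon_j\omega_X$, $\varepsilon_j\downarrow0$; their Monge-Amp\`ere masses are well defined by Theorem~A (each $T_j$ is quasi-closed and quasi-positive) and, using the quasi-monotonicity of Theorem~B (here $\D_\theta=0$), they can be taken to converge to $\vol(\alpha)>0$ --- this convergence of masses along Demailly approximants in the Hermitian setting being precisely what Theorem~\ref{thm:bigvol} provides. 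Next, for $j$ large, I would resolve the singularities of $T_j$ by a modification $\mu_j\colon\widetilde{X}_j\to X$, writing $\mu_j^*T_j=\theta_j+[D_j]$ with $\theta_j$ smooth closed and $D_j$ an effective $\R$-divisor; then $\beta_j:=\theta_j+2\varepsilon_j\mu_j^*\omega_X$ is a quasi-closed, quasi-positive (indeed semipositive) form on $\widetilde{X}_j$ with $\int_{\widetilde{X}_j}\beta_j^n>\tfrac12\vol(\alpha)>0$. Finally I would solve a complex Monge-Amp\`ere equation with smooth positive right-hand side in this class, using the solvability and uniform a priori estimates established in this paper (which extend \cite{TW10,BEGZ10}); this produces a K\"ahler current on $\widetilde{X}_j$, so $\widetilde{X}_j\in\cC$ and therefore $X\in\cC$. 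Throughout one uses $\uvol(\omega_X)<\infty$: it is the standing hypothesis of Theorems~A--B and is inherited by the modifications by \cite[Theorem~A]{GL22}.

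The main obstacle will be the step ``$\vol(\alpha)>0\Rightarrow\alpha$ contains a K\"ahler current'' inside $(2)\Rightarrow(1)$. In the K\"ahler case this is obtained by perturbing $\alpha$ with a K\"ahler class and invoking \cite{BEGZ10}, but here no closed positive form on $X$ is available a priori --- that is essentially the conclusion we are after --- so the regularization and Monge-Amp\`ere arguments must be run intrinsically on $\alpha$ and on its modifications, controlling Monge-Amp\`ere masses in the absence of closedness; this is exactly the role of the Hermitian non-pluripolar products (Theorem~A) and of the quasi-monotonicity of masses (Theorem~B). A secondary, more routine point is the bimeromorphic invariance of $\vol(\alpha)=\int_X T_{\min}^n$, which legitimizes passing to a resolution and is itself part of Theorem~\ref{thm:bigvol}.
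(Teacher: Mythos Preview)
Your handling of $(1)\Rightarrow(2)$ is more elaborate than necessary (and the claim that $\mu_*\hat\omega$ has bounded local potentials is not obvious and in fact not needed), but the direction is soft and your argument can be made to work. The paper simply invokes bimeromorphic invariance of the bounded mass property, and observes that a Fujiki manifold carries a big closed class, which automatically has positive volume.

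The real issue is your $(2)\Rightarrow(1)$. You correctly identify the target---show that $\alpha$ contains a K\"ahler current---but your proposed mechanism does not deliver it. After regularizing and resolving you arrive at a smooth closed form $\theta_j$ on $\widetilde X_j$ with $\theta_j\ge -\e_j\mu_j^*\om_X$ and $\int\theta_j^n>0$, and then you propose to ``solve a complex Monge--Amp\`ere equation with smooth positive right-hand side\ldots; this produces a K\"ahler current.'' It does not: solving $(\beta_j+\ddc u)^n=c\,f\,dV$ gives a positive current with prescribed Monge--Amp\`ere measure, not one bounded below by a Hermitian form. Note also that your $\beta_j=\theta_j+2\e_j\mu_j^*\om_X$ is only semipositive (since $\mu_j^*\om_X$ degenerates on the exceptional locus) and is not closed, so you are neither in the Tosatti--Weinkove setting nor in a position to conclude strict positivity. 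In effect, the implication ``closed, almost-semipositive form with positive top power $\Rightarrow$ K\"ahler current'' that you are assuming is precisely the Demailly--P\u{a}un/Boucksom conjecture you are trying to prove.

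The paper's route is much shorter and avoids this gap entirely: one applies Theorem~\ref{thm:bigvol} directly. Since $\alpha$ is closed, $\vol(\alpha)=\lvol(\alpha)$, and Theorem~\ref{thm:bigvol} asserts that $\lvol$ vanishes outside the big cone; hence $\vol(\alpha)>0$ forces $\alpha$ big, i.e.\ $\alpha$ contains a K\"ahler current, and $X$ is Fujiki (e.g.\ via Proposition~\ref{prop:Fuj}). The Monge--Amp\`ere solvability does enter, but only inside the proof of Theorem~\ref{thm:bigvol}, where it is combined with the Lamari--Chiose Hahn--Banach trick to produce the required lower bound $\int_X\theta\wedge\gamma\ge\delta$ against all $\ddc$-closed positive $(n-1,n-1)$-forms $\gamma$. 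You invoke Theorem~\ref{thm:bigvol} only for ``convergence of masses along Demailly approximants,'' which is not what it says; the statement you need from it is exactly ``$\lvol>0\Rightarrow$ big.'' Once you use that, your steps 1--3 become superfluous.
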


In the final Section \ref{sec:solvMA} we extend some of the main results of \cite{BEGZ10}
by solving complex Monge-Amp\`ere equations associated to a (non-closed) big form $\theta$,
and by establishing uniform a priori estimates in this context, elaborating on \cite{GL23}.
We say that a form $\theta$ is {\it big} if there exists a $\theta$-psh function
$\rho$ with analytic singularities such that $\theta+dd^c \rho $ dominates a hermitian form.
We fix such a big form, and we no longer assume that $X$ has the bounded mass property.
We then extend the key result \cite[Theorem B]{BEGZ10} as follows.

\begin{thmD}
Assume $0 \leq f \in L^p(X)$ with $p>1$ and $\|f\|_p>0$.  Then there exists $(\varphi,c)\in \PSH(X,\theta)\times (0,+\infty)$ 
such that   $\sup_X \f=0$,
	$$
V_{\theta} -C \leq \varphi \leq V_{\theta} \; \; \text{ and } \; \; 	(\theta+dd^c \varphi)^n = c f \omega_X^n,
 $$
  where
	\begin{itemize}
	\item the constant $c>0$ is uniquely determined by $f,X,\theta, \omega_X$, and
	\item $C>0$ is a uniform constant that only depends on $(X,\omega_X)$, $\theta$, $p$ and $\|f\|_p$. 
	\end{itemize}  
\end{thmD}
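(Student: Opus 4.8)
The plan is to follow the continuity method / variational scheme of \cite{BEGZ10}, working with the model potential $V_\theta$ as the reference and exploiting that $\theta$ is big via the analytic-singularity potential $\rho$. I would proceed in four stages.

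\emph{Stage 1: Regularization of $f$ and a Calabi--Yau step in the big class.} First I would solve the equation for bounded, strictly positive data. Replace $f$ by $f_j:=\max(f,1/j)$, renormalize so that $\int_X f_j\,\omega_X^n$ is comparable to $\int_X\omega_X^n$, and use the big form $\theta$ together with a small Kähler perturbation. Concretely, fix $\varepsilon>0$ small so that $\theta+\varepsilon\,dd^c\rho$ still dominates a hermitian form $\omega_\varepsilon$ off the singular locus of $\rho$; on the model side one works with $\theta_\varepsilon:=\theta$ but with potentials bounded below by $V_\theta-C$. The existence of a bounded solution $(\varphi_j,c_j)$ of $(\theta+dd^c\varphi_j)^n=c_jf_j\,\omega_X^n$, normalized by $\sup_X\varphi_j=0$, follows by combining the Tosatti--Weinkove solution of the Hermitian Monge--Ampère equation \cite{TW10} on a resolution where $\theta$ becomes the sum of a semipositive big class and an effective divisor, with Theorem~A to make sense of the non-pluripolar product; the normalizing constant $c_j$ is forced by integrating against the total mass and lies in a fixed compact subinterval of $(0,+\infty)$ by the bounded mass property applied on the resolution (here one uses that $\vol(\{\theta\})>0$ since $\theta$ is big).

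\emph{Stage 2: Uniform $L^\infty$ estimate.} This is the technical heart, and where I expect the main obstacle to lie: one must show $\varphi_j\ge V_\theta-C$ with $C$ depending only on $(X,\omega_X),\theta,p,\|f\|_p$ and \emph{not} on $j$. I would run the Kołodziej-type $L^p$ a priori estimate in the Hermitian, big setting, following \cite{GL23}: pass to a log-resolution $\pi\colon\widetilde X\to X$ so that $\pi^*\theta+dd^c\rho'=\widetilde\omega+[D]$ with $\widetilde\omega$ semipositive big and $D$ effective; then $\pi^*\varphi_j-\rho'$ is $\widetilde\omega$-psh, the measure $cf\,\pi^*\omega_X^n$ acquires an extra factor from the Jacobian which is still in $L^{p'}$ for some $p'>1$ after shrinking $p$, and one applies the comparison-of-capacities argument (Kołodziej's iteration, or the simplified pluripotential approach of Guedj--Lu) to bound $\sup(V_{\widetilde\omega}-(\pi^*\varphi_j-\rho'))$. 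The Hermitian correction terms $dd^c\theta$, $dd^c\theta^2$ are absorbed using the quasi-monotonicity of Monge--Ampère masses (Theorem~B) to keep the relevant volumes bounded along the scheme; this replaces the Stokes-theorem identities used in the Kähler case.

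\emph{Stage 3: Passage to the limit.} With the uniform estimate in hand, the $\varphi_j$ are relatively compact in $L^1(X)$ and (up to a subsequence) decrease, after a standard maximum trick, to some $\varphi\in\PSH(X,\theta)$ with $V_\theta-C\le\varphi\le V_\theta$ and $\sup_X\varphi=0$; the constants $c_j$ converge to some $c>0$. Convergence of the non-pluripolar products $(\theta+dd^c\varphi_j)^n\to(\theta+dd^c\varphi)^n$ on the ample locus follows from the monotone-convergence / plurifine-locality properties packaged in Theorem~A (one only needs convergence where the potentials stay bounded away from $V_\theta-C$, i.e.\ on $\{\varphi>V_\theta-C\}$, then extends by continuity of the total mass). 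Hence $(\theta+dd^c\varphi)^n=cf\,\omega_X^n$ as measures putting no mass on pluripolar sets, which is the required equation since $f\in L^p$ forces the right-hand side to be non-pluripolar.

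\emph{Stage 4: Uniqueness of $c$ and dependence of $C$.} Uniqueness of $c$: integrate the equation against the (pluripolar-null) total mass and use that $\int_X(\theta+dd^c\varphi)^n=\vol(\{\theta\})$ is a fixed number depending only on the $dd^c$-class and singularity type of $\theta$ (by Theorem~B applied to minimal-singularity currents, $\Delta_\theta$ does not enter for the model potential up to the controlled defect), so $c=\vol(\{\theta\})/\int_X f\,\omega_X^n$; any two solutions share this $c$. Finally, tracking the constants through Stages 1--2 shows $C$ depends only on $(X,\omega_X),\theta,p,\|f\|_p$, as claimed. The one point requiring care is that the resolution $\pi$ and the perturbation data are fixed once and for all in terms of $\theta$, so all auxiliary constants are indeed uniform in $f$.
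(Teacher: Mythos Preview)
Your Stage~4 contains a genuine error. You argue that $c=\vol(\{\theta\})/\int_X f\,\omega_X^n$ by integrating the equation, but in the Hermitian setting the total mass $\int_X(\theta+dd^c\varphi)^n$ is \emph{not} a cohomological invariant: for non-closed $\theta$ it depends on $\varphi$, and Theorem~B only gives quasi-monotonicity up to the defect $\Delta_\theta$. So two solutions $\varphi_1,\varphi_2$ with minimal singularities need not have the same Monge--Amp\`ere mass, and your formula for $c$ is meaningless. The paper establishes uniqueness of $c$ via the domination principle (specifically Corollary~\ref{cor:MAcst}): if $(\theta+dd^c u)^n\le c(\theta+dd^c v)^n$ with $u-v$ bounded and both $\ge\rho$, then $c\ge 1$; applying this in both directions forces any two normalizing constants to coincide. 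This domination principle, proved in \S4.1 by a local balayage/envelope argument, is the real engine and you never invoke it.

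Beyond this gap, your overall route differs substantially from the paper's. You propose to regularize $f$, solve on a resolution via Tosatti--Weinkove, and run a Ko\l odziej capacity estimate for the $L^\infty$ bound. The paper instead \emph{first} solves the twisted equation $(\theta+dd^c u_\lambda)^n=e^{\lambda u_\lambda}f\,dV$ (Theorem~\ref{thm:MAAY}), where uniqueness and the $L^\infty$ bound come directly from the domination principle together with an explicit subsolution (Lemma~\ref{lem: subsolution}, which builds $v$ with $(\theta+dd^c v)^n\ge m f\,dV$ by a clever barrier $v=\eta+(1+\eta-u_1)^{-1}$). Theorem~D is then obtained by letting $\lambda=1/j\to 0$: the constants $c_j=e^{j^{-1}\sup u_j}$ are controlled above by integrating against a Gauduchon metric and below by the subsolution, and convergence of the equations is handled not by generic monotone convergence but by sandwiching between the envelope $\env_\theta^\star(\inf_{k\ge j}v_k)$ and $\sup^\star_{k\ge j}v_k$, then closing the gap with the domination principle again. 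Note also that \S4 does \emph{not} assume the bounded mass property; the Monge--Amp\`ere measures are Bedford--Taylor products on the Zariski open set $\Omega$, so your appeals to Theorems~A and~B are not needed (and in Stage~2 would not by themselves supply the $L^\infty$ estimate).
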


Here $V_{\theta}=\sup \{ u, u \in PSH(\theta) \; \text{ with } \; u \leq 0 \}$ is a $\theta$-psh function with minimal singularities.
As in \cite[Section 6]{BEGZ10}, we also show the existence of solutions to Monge-Amp\`ere equations twisted by an exponential (Theorem \ref{thm:MAAY}). However instead of deducing Theorem \ref{thm:MAAY} from Theorem D by a fixed point argument, we first prove
Theorem \ref{thm:MAAY} and then deduce Theorem D by a deformation argument.

\smallskip

The uniqueness of the solution $\f$  is an open problem in the Hermitian setting, even when $\theta$ is a Hermitian form
(see \cite{KN19} for a positive result when $f$ is uniformly bounded away from zero).
Showing higher regularity of the solutions in the Zariski open set $\Omega=(\rho>-\infty)$,
 -when $f$ is smooth-
is an important open problem which is also largely open in the K\"ahler case
(\cite{BEGZ10} treats the case when the class $\{\theta\}$ is moreover nef).

\begin{ackn} 
The authors are partially supported by the fondation Charles Defforey  and the
Institut Universitaire de France. The third named author acknowledges partial support from the PARAPLUI ANR-20-CE40-0019 project and the Centre Henri Lebesgue ANR-11-LABX-0020-01.
\end{ackn}

%

\section{Quasi-closed and quasi-positive currents}

Throughout the article, $X$ denotes a compact complex manifold, of complex dimension $n$. We also pick a reference Hermitian $(1,1)$-form $\om_X>0$. The purpose of this section is to review some basic properties of quasi-psh functions, quasi-closed quasi-positive $(1,1)$-currents, and their $dd^c$-classes.

%
%
\subsection{Quasi-closed currents and $\ddc$-classes}
For each $p,q\in\N$, denote by $\Om^{p,q}(X)$ the Fr\'echet space of smooth $(p,q)$-forms on $X$. 

\begin{defi} We define the \emph{Bott--Chern space} $\BC^{p,q}(X)$ of (possibly non-closed) \emph{$\ddc$-classes of bidegree $(p,q)$} as the cokernel of $\ddc\colon\Om^{p-1,q-1}(X)\to\Om^{p,q}(X)$, \ie 
$$
\BC^{p,q}(X):=\frac{\Om^{p,q}(X)}{\ddc\Om^{p-1,q-1}(X)}. 
$$
\end{defi}
We denote by $\{\theta\}\in\BC^{p,q}(X)$ the $\ddc$-class of a $(p,q)$-form $\theta$. When $p=q$, the real operator $\ddc$ induces a map $\ddc\colon\Om^{p-1,p-1}(X,\R)\to\Om^{p,p}(X,\R)$ between spaces of real forms, and we denote by 
$$
\BC^{p,p}(X,\R)\subset\BC^{p,p}(X)
$$
its cokernel. Setting $\bar d\{\theta\}:=d\theta$ for any $(p,q)$-form $\theta$ defines a linear map
\begin{equation}\label{equ:bard}
\bar d\colon\BC^{p,q}(X)\to\Om^{p+1,q}(X)\oplus\Om^{p,q+1}(X),
\end{equation}
whose kernel coincides with the usual \emph{Bott--Chern cohomology space} 
$$
\HBC^{p,q}(X)=\frac{\Om^{p,q}(X)\cap\ker d}{\ddc\Om^{p-1,q-1}(X)}\hto\BC^{p,q}(X). 
$$
When $X$ is K\"ahler (or even Fujiki), the latter coincides with the Dolbeault space $H^{p,q}(X)$. 
 
We endow the (infinite dimensional) complex vector space $\BC^{p,q}(X)$ with the quotient topology. Thus $\{\theta_j\}\to\{\theta\}$ in $\BC^{p,q}(X)$ iff $\theta_j\to\theta$ smoothly for an appropriate choice of representatives. Note that~\eqref{equ:bard} is continuous in this topology. 

\begin{lem}\label{lem:Frechet} The space $\HBC^{p,q}(X)$ is finite dimensional, and $\BC^{p,q}(X)$ is a (Hausdorff) Fr\'echet space.
\end{lem}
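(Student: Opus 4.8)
The plan is to treat the two assertions separately, since they rely on different inputs. For the finite-dimensionality of $\HBC^{p,q}(X)$, I would invoke the standard elliptic-theory argument: on the compact manifold $X$, fix a Hermitian metric and consider the fourth-order Bott--Chern Laplacian $\Delta_{BC}$ (the Kodaira--Spencer--Schweitzer operator whose kernel on $(p,q)$-forms represents $\HBC^{p,q}$). This operator is elliptic and self-adjoint, hence has finite-dimensional kernel by the usual Hodge-theoretic package for elliptic operators on compact manifolds; the resulting harmonic space is isomorphic to $\HBC^{p,q}(X)$. Alternatively, and perhaps more in keeping with the paper's elementary spirit, one can deduce finite-dimensionality purely from the finite-dimensionality of the Dolbeault groups $H^{p,q}(X)$ via the exact sequences relating Bott--Chern, Dolbeault, and de Rham cohomology (or just note $\HBC^{p,q}(X)$ fits into a complex of finite-dimensional spaces). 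Either way this step is classical and I would cite it rather than reprove it.

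For the Fréchet assertion, the key point is that $\ddc\Om^{p-1,q-1}(X)$ is a \emph{closed} subspace of the Fréchet space $\Om^{p,q}(X)$: the quotient of a Fréchet space by a closed subspace is again Fréchet (and in particular Hausdorff), and this is what gives both claims at once. So the heart of the matter is closedness of the image of $\ddc$. Here I would again use ellipticity: $\Delta_{BC}$ being elliptic on the compact manifold $X$, one has a Hodge-type orthogonal decomposition $\Om^{p,q}(X) = \mathcal{H}^{p,q} \oplus \ddc(\cdots) \oplus (\text{image of the formal adjoint})$, from which $\ddc\Om^{p-1,q-1}(X)$ is seen to be closed with the harmonic space as a topological complement — indeed $\BC^{p,q}(X) \cong \mathcal{H}^{p,q} \oplus (\text{the adjoint-image piece})$ as Fréchet spaces, and the latter direct summand is closed. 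Concretely: a limit in $\Om^{p,q}(X)$ of forms $\ddc\psi_j$ projects to $0$ in the (finite-dimensional, hence complemented) harmonic part and in the adjoint piece, so lies in $\ddc\Om^{p-1,q-1}(X)$.

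I would then assemble these: closedness of $\ddc\Om^{p-1,q-1}(X)$ gives that $\BC^{p,q}(X) = \Om^{p,q}(X)/\ddc\Om^{p-1,q-1}(X)$ is Hausdorff and Fréchet (quotients of Fréchet by closed subspaces are Fréchet, a standard fact in functional analysis), and the finite-dimensionality of $\HBC^{p,q}(X) = \ker\bar d$ follows from the harmonic representation. The main obstacle — really the only non-formal input — is the closed-range property for $\ddc$ on $(p-1,q-1)$-forms; everything else is bookkeeping with elliptic Hodge theory and Fréchet-space generalities. I would make sure to state clearly which elliptic operator is being used (the Bott--Chern Laplacian, as in Schweitzer or Kodaira--Spencer), since $\ddc = i\partial\bar\partial$ is itself not elliptic and one cannot apply $\bar\partial$-Hodge theory directly.
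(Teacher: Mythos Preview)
Your proposal is correct, but the paper takes a slightly more economical route. Like you, the paper treats the finite-dimensionality of $\HBC^{p,q}(X)$ as classical (citing Demailly's book, ultimately relying on the ellipticity of the Bott--Chern Laplacian you describe). The difference is in how closedness of $\ddc\Om^{p-1,q-1}(X)$ is obtained: rather than invoking the full Hodge decomposition for $\Delta_{BC}$, the paper simply observes that $\ddc\Om^{p-1,q-1}(X)$ sits inside the closed subspace $\Om^{p,q}(X)\cap\ker d$ with finite codimension (this is exactly the finite-dimensionality of $\HBC^{p,q}(X)$), and then uses the soft functional-analytic fact that the image of a continuous linear map between Fr\'echet spaces with finite-dimensional cokernel is automatically closed (a consequence of the open mapping theorem). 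Thus closedness is deduced \emph{from} finite-dimensionality rather than established alongside it via the orthogonal decomposition.

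Your approach buys a more explicit topological splitting $\BC^{p,q}(X)\cong\mathcal{H}^{p,q}\oplus(\text{adjoint piece})$, which is conceptually nice but not needed here. The paper's approach is shorter and uses strictly less input: only the dimension statement, not the full Hodge-theoretic package. Both are perfectly valid.
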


\begin{proof} The first point is well-known, see for instance~\cite[Theorem~VI.12.4]{DemBook}. It equivalently says that $\ddc\Om^{p-1,q-1}(X)$ has finite codimension in the closed subspace $\Om^{p,q}(X)\cap\ker d$ of the Fr\'echet space $\Om^{p,q}(X)$. Thus $\ddc\Om^{p-1,q-1}(X)$ is closed in $\Om^{p,q}(X)\cap\ker d$, and hence in $\Om^{p,q}(X)$, which proves the second point. 
\end{proof}

As is well-known, Bott--Chern cohomology can also be described in terms of currents. This is more generally the case 
for $\BC^{p,q}(X)$. To this end, it will be convenient to introduce the following terminology. 

\begin{defi} We say that a current $T$ on $X$ is \emph{quasi-closed} if $dT$ is smooth.
\end{defi}
In particular, any smooth form is quasi-closed. 
\begin{lem}\label{lem:quasi} A $(p,q)$-current $T$ on $X$ is quasi-closed iff it admits a decomposition 
$$
T=\theta+\ddc U
$$
where $\theta$ is a smooth $(p,q)$-form and $U$ a $(p-1,q-1)$-current. Furthermore, any other such decomposition is of the form 
$$
T=(\theta+\ddc\g)+\ddc(U-\g)
$$ 
for a smooth $(p-1,q-1)$-form $\g$. 
\end{lem}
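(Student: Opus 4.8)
The plan is to prove Lemma~\ref{lem:quasi} in two directions, the "if" being trivial and the "only if" being the heart of the matter, which reduces to the solvability of a $\ddc$-equation for currents modulo smooth forms, i.e. to the statement that the natural map from the Bott--Chern \emph{currents} complex to $\BC^{p,q}(X)$ is surjective.

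First I would dispose of the easy implications. If $T=\theta+\ddc U$ with $\theta$ smooth and $U$ a current, then $dT=d\theta+d\ddc U=d\theta$ since $d\ddc U=0$ for the distributional operators (here $\ddc$ is a real multiple of $\partial\bar\partial$, so $d\ddc=0$ identically on currents); hence $dT$ is smooth and $T$ is quasi-closed. For the ambiguity statement, suppose $\theta+\ddc U=\theta'+\ddc U'$ with $\theta,\theta'$ smooth. Then $\eta:=\theta-\theta'=\ddc(U'-U)$ is a smooth form which is $\ddc$-exact \emph{as a current}. One must then invoke the regularity of the Bott--Chern complex: a smooth form that is $\ddc U$ for some current $U$ is also $\ddc\g$ for some smooth form $\g$. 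Granting this, $\theta'=\theta-\ddc\g$ and $U'=U-\g+(\text{a }\ddc\text{-closed, hence... })$; more carefully, from $\ddc(U'-U)=\ddc\g$ one gets $\ddc(U'-U-\g)=0$, but that does not force $U'-U-\g=0$. Rather, the honest statement is: \emph{some} decomposition of $T$ with smooth form part $\theta'=\theta-\ddc\g$ is $T=(\theta+\ddc\g)+\ddc(U-\g)$, which is exactly what is asserted; and any two decompositions of $T$ with the same smooth part differ by adding a $\ddc$-closed current to $U$ — but the lemma only claims the displayed form, so it suffices to produce, for a given alternative smooth representative $\theta'$, the smooth $\g$ with $\theta'=\theta+\ddc\g$ and then observe $U'=U-\g+W$ with $\ddc W=0$; since the lemma statement writes $U-\g$, I read it as asserting existence of such a decomposition, and I would phrase the proof accordingly.

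The core of the "only if" direction is: given $T$ with $dT$ smooth, produce a smooth form $\theta$ and a current $U$ with $T=\theta+\ddc U$. The standard tool is that Bott--Chern cohomology of currents agrees with that of smooth forms, together with a $\ddc$-Poincaré-type lemma. Concretely, $dT$ is a smooth form which is $d$-exact as a current (it equals $dT$), hence $d$-exact as a smooth form, and in fact $\partial$- and $\bar\partial$-exact; using the $\partial\bar\partial$-lemma for currents on a general compact complex manifold is \emph{not} available, so instead I would argue as follows. Write $T$ as a sum over a finite SNC-type cover or, cleaner, invoke the quasi-isomorphism between the complex of currents and the complex of smooth forms (fine resolutions of the same sheaf): the class of $T$ in the current-level $\BC$ group maps to a class in $\BC^{p,q}(X)$ represented by a smooth form $\theta$, and $T-\theta$ maps to zero, i.e. $T-\theta=\ddc U$ for a current $U$. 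To make this precise I would use that $\ddc\Om^{p-1,q-1}(X)$ is closed of finite codimension in $\Om^{p,q}(X)\cap\ker d$ (Lemma~\ref{lem:Frechet}) and the analogous soft/fine-sheaf statement at current level; the comparison map being an isomorphism on cohomology is the content one cites from~\cite{DemBook}. Then $\theta$ is the desired smooth form and $U$ the desired current.

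The main obstacle is that there is no $\partial\bar\partial$-lemma on a general compact complex manifold, so one cannot simply solve $\ddc U=T-\theta$ by elementary means; everything must be routed through the sheaf-theoretic identification of Bott--Chern cohomology computed by currents versus by smooth forms, i.e. through the fact that the inclusion of smooth forms into currents is a filtered quasi-isomorphism for the relevant double complex. Once that is invoked as a black box (it is classical, cf.~\cite[Ch.~VI]{DemBook}), the rest is bookkeeping: peel off the smooth representative $\theta$ from $T$, check $d\ddc=0$ on currents for the ambiguity part, and produce the smooth $\g$ bridging two smooth representatives of the same $\ddc$-class from the finite-codimension/closedness statement already recorded in Lemma~\ref{lem:Frechet}. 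I would also remark that the bidegree $(p,q)$ is arbitrary and nothing uses $p=q$ here, so the proof is uniform.
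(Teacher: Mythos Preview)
Your treatment of the easy direction and of the uniqueness clause is fine and matches the paper. The gap is in the hard direction. You invoke as a black box that ``Bott--Chern cohomology of currents agrees with that of smooth forms'', but the standard statement (e.g.\ in~\cite{DemBook}) concerns $\ker d/\mathrm{im}\,\ddc$, i.e.\ \emph{closed} currents, whereas your $T$ is only quasi-closed. If by ``current-level $\BC$ group'' you mean quasi-closed currents modulo $\ddc$-exact ones, then its isomorphism with $\BC^{p,q}(X)$ is precisely the lemma you are trying to prove, and the argument is circular. If you mean closed currents, then $T$ does not define a class there, and you still owe a reduction from quasi-closed to closed; finding a smooth $(p,q)$-form $\alpha$ with $d\alpha=dT$ is exactly the nontrivial step, and it does not follow from de Rham alone (which only gives a form of total degree $p+q$, not of pure bidegree $(p,q)$).

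The paper supplies that missing reduction using nothing beyond the de Rham and Dolbeault comparison between forms and currents. First, de Rham produces a current $S$ with $T+dS$ smooth. Decomposing by bidegree, $T+\partial S^{p-1,q}+\bar\partial S^{p,q-1}$ is smooth, and so are $\bar\partial S^{p-1,q}$ and $\partial S^{p,q-1}$. Dolbeault (and its conjugate) then yields $(p-1,q-1)$-currents $R,Q$ making $S^{p-1,q}+\bar\partial R$ and $S^{p,q-1}+\partial Q$ smooth, whence $\theta:=T+\partial\bar\partial(R-Q)$ is smooth and $T=\theta+\ddc U$ with $U$ a constant multiple of $R-Q$. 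This is elementary and avoids any hypercohomology comparison for the Bott--Chern complex in the non-closed regime, which would itself require justification.
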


\begin{proof} 
While this result is likely well-known, we provide the simple argument for the convenience of the reader. The existence of a current $U$ such that $T-\ddc U$ is smooth implies that $dT=d(T- \ddc U))$ is smooth. 
Conversely, assume $dT$ is smooth. We first claim that there exists a $(p+q-1)$-current $S$  such that $T+dS$ is smooth. Indeed, since de Rham cohomology can be computed using either forms or currents, the smooth $(p+q+1)$-form $dT$ is exact, and we can thus find a $(p+q)$-form $\a$ such that $dT=d\a$. The $(p+q)$-current $T-\a$ is then closed, and we can now find a $(p+q-1)$-current $S$ such that $T-\a+dS$ is smooth, hence the claim. 

Since $T+dS=T+(\partial+\dbar)S$ is smooth, decomposing according to type shows that
$$
T+\partial S^{p-1,q}+\dbar S^{p,q-1},\quad\dbar S^{p-1,q}\quad\text{and}\quad\partial S^{p,q-1}
$$
are smooth. Arguing as above, this time with Dolbeault cohomology, it follows that
$$
S^{p-1,q}+ {\bar{\partial} R}\quad\text{and}\quad S^{p,q-1}+\partial Q
$$
are smooth for some $(p-1,q-1)$-currents $R,Q$. Thus 
$$
T+\partial S^{p-1,q}+\dbar S^{p,q-1}=T+\partial\dbar (R-Q)=:\theta, 
$$
is smooth, which yields the desired decomposition $T=\theta+\ddc U$. 

Consider now another decomposition $T=\theta'+\ddc U'$. Then $\ddc(U-U')$ is a smooth $(p,q)$-form, which $\ddc$-exact as a current, and hence as a form. Thus $\ddc(U-U')=\ddc\g$ for a smooth $(p-1,q-1)$-form $\g$, and the last point follows. 
\end{proof}
As a direct consequence of Lemma~\ref{lem:quasi}, the inclusion of smooth forms into quasi-closed currents induces a vector space isomorphism 
\begin{equation}\label{equ:BCcurrent}
\BC^{p,q}(X)\simeq\frac{\{T\text{ quasi-closed }(p,q)\text{-current}\}}{\{ \ddc U\mid U\,(p-1,q-1)\text{-current}\}}.
\end{equation}
We denote by $\{T\}\in \BC^{p,q}(X)$ the $\ddc$-class of a quasi-closed $(p,q)$-current $T$. Note that
$$
\bar d\{T\}=dT. 
$$
When $T$ is closed, $\{T\}$ lies in the finite dimensional space $H^{p,q}_\BC(X)$, and $T\mapsto\{T\}$ is further weakly continuous on closed currents, since the weak quotient topology of $H^{p,q}_\BC(X)$ is Hausdorff, and hence coincides with the strong quotient topology. 

However, $T\mapsto\{T\}$ is \emph{not} weakly continuous on quasi-closed currents. Indeed, a weakly convergent sequence 
of quasi-closed currents $T_j\to T$ can only satisfy $\{T_j\}\to\{T\}$ if $\bar d\{T_j\}=dT_j$ converges smoothly to $\bar d\{T\}=dT$.

%
\subsection{Quasi-psh functions}\label{sec:quasipsh}
Recall that a \emph{quasi-plurisubharmonic} function (\emph{quasi-psh} for short) is a function $\f\colon X\to\R\cup\{-\infty\}$ that is locally the sum of a psh function and a smooth function. In particular, $\f$ is usc and integrable. Quasi-psh functions are actually in $L^p(X)$ for any $p\in [1,\infty)$, and the induced topologies are further all equivalent. 

The \emph{plurifine topology} of $X$ is defined as the topology generated by all quasi-psh functions on $X$. We shall say that an operator $(\f_1,\dots,\f_p)\mapsto F(\f_1,\dots,\f_p)$, defined on certain tuples of quasi-psh functions and with values in order $0$ currents, is \emph{local in the plurifine topology} if, for any plurifine open $O\subset X$ and functions $\f_i,\p_i$, we have 
$$
\f_i=\p_i\text{ on }O\text{ for all }i\Longrightarrow\one_OF(\f_1,\dots,\f_p)=\one_O F(\p_1,\dots,\p_p).
$$

Any quasi-psh function $\f$ satisfies $\theta+\ddc\f\ge 0$ in the sense of currents for some smooth $(1,1)$-form $\theta$.  We then say that $\f$ is \emph{$\theta$-psh}, and denote by 
$$
\PSH(\theta)=\PSH(X,\theta)
$$
the set of such functions, which sits as a closed convex subspace of $L^1(X)$. 

When $\theta$ is closed, it admits a local potential near each point of $X$, \ie a smooth function $\rho$ such that $\theta= \ddc\rho$, and a function $\f$ is then $\theta$-psh iff $\rho+\f$ is psh. The following well-known observation allows to reduce many of the basic properties of $\theta$-psh functions for a possibly non-closed $\theta$ to the closed case. 

\begin{lem}\label{lem:approxclosed} Pick a real $(1,1)$-form $\theta$, and $\e>0$. Then any point of $X$ admits a neighborhood $U\subset X$ with a closed $(1,1)$-form $\theta'$ such that $-\e\om_X\le\theta-\theta'\le\e\om_X$ on $U$. 
\end{lem}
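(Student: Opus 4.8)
The plan is to reduce to a local statement and then ``freeze the coefficients'' of $\theta$ at the chosen point. Since the conclusion only concerns a neighborhood of a point, I would fix $x_0\in X$ and pick holomorphic coordinates $z=(z_1,\dots,z_n)$ on a chart $U_0\ni x_0$ centered at $x_0$. In these coordinates, write $\theta=i\sum_{j,k}h_{jk}(z)\,dz_j\wedge d\bar z_k$ with $h_{jk}$ smooth and $h_{kj}=\overline{h_{jk}}$ (as $\theta$ is real). Then I would set
$$\theta':=i\sum_{j,k}h_{jk}(0)\,dz_j\wedge d\bar z_k,$$
the constant-coefficient $(1,1)$-form obtained by evaluating the coefficients of $\theta$ at $x_0$. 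This $\theta'$ is again real, and it is closed: both $\partial\theta'$ and $\dbar\theta'$ vanish because each coefficient $h_{jk}(0)$ is constant. (This reflects the general fact that closedness of a $(1,1)$-form is precisely the obstruction to the existence of a local potential, which one would want for the reduction alluded to before the lemma.)

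Next, I would note that $\theta-\theta'=i\sum_{j,k}\bigl(h_{jk}(z)-h_{jk}(0)\bigr)\,dz_j\wedge d\bar z_k$ vanishes identically at $z=0$, i.e.\ $(\theta-\theta')_{x_0}=0$ as a Hermitian form on $T_{x_0}X$. Consider the real smooth $(1,1)$-forms $\alpha_{\pm}:=\e\,\om_X\pm(\theta-\theta')$ on $U_0$. At $x_0$ one has $(\alpha_{\pm})_{x_0}=\e\,(\om_X)_{x_0}>0$, since $\om_X>0$. Because positive-definiteness of a continuous field of Hermitian forms is an open condition on the base (the lowest eigenvalue varies continuously, or: compactness of the unit sphere turns a pointwise strict inequality into a uniform one nearby), there are neighborhoods $U_{\pm}\subset U_0$ of $x_0$ on which $\alpha_{\pm}>0$. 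On $U:=U_+\cap U_-$ this reads exactly $-\e\,\om_X\le\theta-\theta'\le\e\,\om_X$, which is the desired conclusion.

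I do not anticipate any genuine obstacle here: the argument is elementary. The only mildly delicate point is the passage from the pointwise smallness of the coefficient differences $h_{jk}(z)-h_{jk}(0)$ to the two-sided bound relative to $\om_X$, which is handled above by the openness of the positivity condition; alternatively one can shrink $U_0$ to a relatively compact subchart on which $\om_X$ dominates a fixed positive multiple of the Euclidean form $i\sum_j dz_j\wedge d\bar z_j$, and then bound the Hermitian matrix $\bigl(h_{jk}(z)-h_{jk}(0)\bigr)_{j,k}$ by (a dimensional constant times) its entrywise supremum times the identity, using continuity of the $h_{jk}$ at $x_0$.
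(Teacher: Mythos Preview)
Your argument is correct and is essentially the same as the paper's: freeze the coefficients of $\theta$ at the point in local coordinates to obtain a constant-coefficient (hence closed) form $\theta'$, and conclude by continuity. The paper states this in one sentence, but the content is identical.
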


\begin{proof} 
For any $x\in X$, it suffices to pick local coordinates $(z_1,\dots,z_n)$  centered at $x$, and to define $\theta'$ as the constant $(1,1)$-form in these coordinates which coincides with $\theta$ at $x$. The result then holds by continuity of $\theta$ at $x$.
\end{proof}

Given any $(1,1)$-form $\theta$ and any function $f\colon X\to [-\infty,\infty]$, the \emph{$\theta$-psh envelope} of $f$ is defined as the pointwise supremum
\begin{equation}\label{equ:pshenv}
\env_\theta(f):=\sup\left\{\f\mid\f\in\PSH(\theta),\,\f\le f\right\}.
\end{equation}
Denote by $\env_\theta^\star(f)$ its usc regularization. Then one of the following holds: 
\begin{itemize}
\item[(a)] $\env_\theta^\star(f)\equiv-\infty$ (\ie there exists no $\theta$-psh function $\f$ such that $\f\le f$); 
\item[(b)] $\env_\theta^\star(f)\equiv+\infty$; 
\item[(c)] $\env_\theta^\star(f)$ is the largest $\theta$-psh function such that $\env_\theta^\star(f)\le f$ outside a pluripolar subset of $X$. 
\end{itemize}
Note that if $f$ is usc and (c) holds then $\env_\theta(\f)$ is already usc (and hence $\theta$-psh), since $\env^\star_\theta(\f)$ is then a candidate in the envelope. 

\begin{defi}\label{defi:analsing} We say that a quasi-psh function $\f$ has \emph{analytic singularities (with smooth remainder)} if $\f$ can be locally written as 
$$
\f=\frac{1}{2m}\log\sum_{j=1}^N|f_j|^2+u
$$
for some $m\in\Z_{>0}$ and some choice of holomorphic functions $f_1,\dots,f_N$ and smooth function $u$. If we can further take $N=1$ then we say that $\f$ has \emph{divisorial singularities (with smooth remainder)}.
\end{defi}
If $\f$ has analytic singularities, then it is smooth on the Zariski open set $\{\f>-\infty\}$. If $\f$ further has divisorial singularities, then the effective $\Q$-divisor $E:=\tfrac 1m\div(f_1)$ is globally defined, and the Lelong--Poincar\'e formula yields
$$
dd^c\f=\b+[E]
$$
where $\b$ is a closed smooth $(1,1)$-form. 

As pointed out in~\cite[Remark 2.7]{D+}, the notion of analytic singularities with smooth remainder needs to be handled with care, as it depends on the choice of local holomorphic functions $(f_j)$, and not only on the ideal sheaf they generate. At any rate, the following global result holds: 

\begin{lem}\label{lem:analres} Assume $\f$ is a quasi-psh function with analytic singularities. Then there exists a modification $\pi\colon Y\to X$, isomorphic over the Zariski open set $\{\f>-\infty\}$, such that $\pi^\star\f$ has divisorial singularities. 
\end{lem}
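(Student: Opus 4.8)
The statement is local-to-global: we are given that $\f$ has analytic singularities, which by Definition~\ref{defi:analsing} means that on each member $U_\a$ of some finite open cover $\{U_\a\}$ of the compact manifold $X$ we have $\f|_{U_\a} = \tfrac{1}{2m_\a}\log\sum_j |f_{\a,j}|^2 + u_\a$ with $u_\a$ smooth. The first step is to reduce to a single globally defined ideal. After passing to a common multiple one may assume all the $m_\a$ equal a fixed $m$; then the local functions $(f_{\a,j}^{\,m/m_\a})$ — or more safely, just observe that on overlaps $U_\a\cap U_\b$ the difference $\tfrac{1}{2m}\log\sum_j|f_{\a,j}|^2 - \tfrac{1}{2m}\log\sum_k|f_{\b,k}|^2 = u_\b - u_\a$ is \emph{bounded}, hence the coherent ideal sheaves $\mathcal{I}_\a$ generated by the $(f_{\a,j})$ on $U_\a$ patch up (their integral closures, or equivalently the associated singularity exponents, agree on overlaps) to a single coherent ideal sheaf $\mathcal{I}\subset\cO_X$ whose zero locus is exactly $\{\f=-\infty\}=:Z$. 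The precise mechanism: since $\log$ of the local generators differ by bounded functions, $\f$ has the same unbounded locus and the same local integrability behaviour as the function attached to $\mathcal{I}$, and for the purposes of the lemma (divisorial singularities of the pullback) it suffices to work with $\mathcal{I}$.

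The second step is to resolve $\mathcal{I}$: by Hironaka's principalization of ideals there is a proper modification $\pi\colon Y\to X$, obtained as a composition of blow-ups with smooth centers lying over $Z$, hence an isomorphism over $X\setminus Z = \{\f>-\infty\}$, such that $\pi^{-1}\mathcal{I}\cdot\cO_Y = \cO_Y(-D)$ for an effective divisor $D$ on $Y$ with simple normal crossing support. The third step is to transport this back to $\f$. Over each $U_\a$ we have $\pi^\star\f = \tfrac{1}{2m}\log\sum_j|f_{\a,j}\circ\pi|^2 + u_\a\circ\pi$; since $\pi^{-1}\mathcal{I}\cdot\cO_Y$ is locally generated by a single monomial $s$ cutting out $D$, each $f_{\a,j}\circ\pi = s\cdot g_{\a,j}$ with the $g_{\a,j}$ having no common zeros, so $\sum_j|f_{\a,j}\circ\pi|^2 = |s|^2\cdot h$ with $\log h$ smooth (indeed $h$ is a positive smooth function, being a finite sum of squared moduli of holomorphic functions with no common zero — one should note that $\log$ of such a sum is locally bounded and in fact smooth because the $g_{\a,j}$ generate the unit ideal, so locally one of them is nonvanishing). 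Hence locally $\pi^\star\f = \tfrac{1}{m}\log|s| + (\text{smooth})$, which is precisely divisorial singularities with smooth remainder, with associated effective $\Q$-divisor $\tfrac1m D$.

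The main obstacle — and the point requiring care, as flagged in the citation of~\cite[Remark 2.7]{D+} — is the passage in Step~1 from the \emph{presented} local data (a choice of finitely many holomorphic functions) to a genuinely global object: the notion "analytic singularities with smooth remainder" is not intrinsic to the ideal sheaf generated, so one must check that the patching really does produce a coherent ideal and that resolving it produces the divisorial form of $\pi^\star\f$ rather than merely of some comparable function. Concretely, the subtlety is that $\sum_j|f_{\a,j}|^2$ and $\sum_k|f_{\b,k}|^2$ need not be comparable up to smooth factors even when their $\log$'s differ by a bounded amount; however, what saves us is that we do not need the $f_{\a,j}$ themselves to glue, only that after the modification $\pi$ each $\pi^\star\f$ is locally $\tfrac1m\log|s|+C^\infty$ — and this follows from $\pi^{-1}\mathcal{I}\cdot\cO_Y$ being an invertible sheaf together with the elementary fact that $\log$ of a sum of squared moduli of a base-point-free system of holomorphic functions is smooth. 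So the argument is: glue the ideals, principalize, and then the divisorial conclusion for $\pi^\star\f$ is automatic. The remaining verifications (that the blow-up centers can be taken over $Z$, that SNC support can be achieved, that the $u_\a\circ\pi$ are still smooth) are standard.
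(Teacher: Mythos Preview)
Your approach is essentially the same as the paper's---reduce to a common exponent, pass to a global coherent ideal, principalize, and read off divisorial singularities---but there are two places where you are imprecise and the paper is sharper.

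First, your construction of the global ideal $\cI$ is vague: you say the local ideals $\cI_\a$ ``patch up (their integral closures \dots\ agree on overlaps)'' but never make this a well-defined coherent sheaf. The paper avoids this by defining $\cI$ intrinsically as the sheaf of germs $f$ with $|f|\le C e^{m\f}$ (where $m=\prod_\a m_\a$), which is manifestly global, and then identifies $\cI|_{U_\a}$ with the integral closure of the ideal $\cI_\a$ generated by $(f_{\a,j}^{m/m_\a})$, via the analytic characterization of integral closure. This gives coherence for free.

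Second, and more seriously, you assert without justification that after principalizing $\cI$ one has $f_{\a,j}\circ\pi = s\cdot g_{\a,j}$ with the $g_{\a,j}$ \emph{having no common zeros}. Divisibility by $s$ follows from $f_{\a,j}\in\cI_\a\subset\cI$, but ``no common zeros'' amounts to $\cI_\a\cdot\cO_Y=\cO_Y(-D)$ rather than merely $\subset\cO_Y(-D)$, and this is not automatic: a priori the generators of the smaller ideal $\cI_\a$ could vanish to higher order than those of its integral closure $\cI$. The paper fills this gap by observing that since $\cI$ and $\cI_\a$ share the same integral closure, their \emph{normalized blowups} coincide on $U_\a$; hence the log resolution $\pi$ of $\cI$ factors through the normalized blowup of $\cI_\a$, which forces the pullbacks $\pi^\star f_{\a,j}^{m/m_\a}$ to actually generate $\cO_Y(-D)$. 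Without this step your computation of $\pi^\star\f$ as $\tfrac1m\log|s|+C^\infty$ is unjustified.
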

\begin{proof} By assumption, $X$ admits a finite open cover $(U_\a)$ such that 
$$
\f|_{U_\a}=\frac{1}{2m_\a}\log\sum_{j=1}^{N_\a}|f_{\a j}|^2+u_\a
$$
with $f_{\a j}\in\cO(U_\a)$, $u_\a\in C^\infty(U_\a)$ and $m_\a\in\Z_{>0}$. Set $m:=\prod_\a m_\a$, and denote by $\cI\subset\cO_X$ the ideal sheaf of germs of holomorphic functions $f$ such that $|f|\le C e^{m\f}$ locally. On $U_\a$ this condition is equivalent to $|f|\le C\max_j \left|f_{\a j}^{m/m_\a}\right|$. By the well-known analytic characterization of integral closure, this shows that $\cI|_{U_\a}$ coincides with the integral closure of the ideal sheaf $\cI_\a\subset\cO_{U_\a}$ generated by $(f_{\a j}^{m/m_\a})_{1\le j\le N_\a}$, and it follows that the ideal sheaf $\cI$ is coherent. 

Pick a log resolution $\pi\colon Y\to X$ of $\cI$, and denote by $D$ the effective divisor of $Y$ such that $\cI\cdot\cO_Y=\cO_Y(-D)$. Then $\pi$ factors through the normalization of the blowup of $\cI$, which coincides on $U_\a$ with the normalization of the blowup of $\cI_\a$, as the two ideal sheaves share the same integral closure. As a result, the functions $(\pi^\star f_{\a j}^{m/m_\a})_j$ generate $\cO(-D)$ on $\pi^{-1}(U_\a)$. Given a local equation $f_D$ of $D$, we thus have $\pi^\star f_{\a j}^{m/m_\a}=f_D g_{\a j}$ for a family of local holomorphic functions $(g_{\a j})_j$ without common zeroes. This shows 
$$
\pi^\star\f=\frac{1}{2m_\a}\log\sum_j |f_D|^{2m_\a/m} |g_{\a j}|^{2m_\a/m}+\pi^\star u_\a=\frac{1}{2m}\log|f_D|+v_\a
$$
with $v_\a:=u_\a+\frac{1}{2m_\a}\log\sum_j |g_{\a j}|^{2m_\a/m}\in C^\infty$, which proves, as desired, that $\pi^\star\f$ has divisorial singularities.
\end{proof}

The importance of functions with analytic singularities stems from the following consequence of Demailly's fundamental regularization theorem \cite{Dem92}: 

\begin{thm}[Demailly] \label{thm:Demreg} 
Any $\f\in\PSH(\theta)$ can be written as the limit of a decreasing sequence $\f_j\in\PSH(\theta+\e_j\omega_X)$ with analytic singularities, where $\e_j>0$ decreasing to zero. 
\end{thm}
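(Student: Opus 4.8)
The plan is to follow the proof of Demailly's regularization theorem \cite{Dem92}, which is essentially local, and to explain the minor adjustments needed because $\theta$ is only smooth, not necessarily closed. First I would fix $\e>0$ and, using Lemma~\ref{lem:approxclosed}, cover $X$ by finitely many coordinate balls $U_\a$ carrying a \emph{closed} $(1,1)$-form $\theta_\a'$ with $|\theta-\theta_\a'|\le\e\,\om_X$ on $U_\a$. Since $\theta_\a'$ is closed on the ball $U_\a$, I write $\theta_\a'=\ddc g_\a$ with $g_\a\in C^\infty(U_\a)$; after adding a sufficiently large multiple of $\e|z|^2$ to $g_\a$ one obtains a smooth $h_\a$ with $\ddc h_\a\le\theta+C\e\,\om_X$ on a slightly smaller ball $U_\a'$ and with $\psi_\a:=\f+h_\a$ psh on $U_\a'$. (In the classical closed case one simply takes $\e=0$, $\theta_\a'=\theta$, and $h_\a$ a genuine local potential; this is the only place the closedness hypothesis of the textbook statement is used.)

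Next I would carry out the local Bergman kernel regularization on each $U_\a'$: for $m\in\Z_{>0}$, set $\psi_{\a,m}:=\tfrac1{2m}\log\sum_k|\sigma_{\a,m,k}|^2$, where $(\sigma_{\a,m,k})_k$ is an orthonormal basis of $\{f\in\cO(U_\a')\mid\int_{U_\a'}|f|^2e^{-2m\psi_\a}<+\infty\}$. The Ohsawa--Takegoshi extension theorem (lower bound) and the sub-mean value inequality (upper bound) give, on every compact subset of $U_\a'$,
$$
\psi_\a-\tfrac{C}{m}\le\psi_{\a,m}\le\sup_{B(\cdot,r)}\psi_\a+\tfrac1m\log\tfrac{C}{r^{2n}},
$$
so $\psi_{\a,m}\to\psi_\a$ pointwise and in $L^1_{\mathrm{loc}}$, while a standard subadditivity property of $(m\psi_{\a,m})_m$ (from multiplicativity of sections) lets one arrange $(\psi_{\a,2^k})_k$ to be decreasing up to vanishing constants. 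Then $\f_{\a,m}:=\psi_{\a,m}-h_\a$ has analytic singularities and $\theta+\ddc\f_{\a,m}=(\theta-\ddc h_\a)+\ddc\psi_{\a,m}\ge-C\e\,\om_X$ on $U_\a'$; comparing the weighted $L^2$-spaces on overlaps — splitting the smooth transition $h_\a-h_\b$ into a pluriharmonic part, removed by multiplication by a holomorphic unit, and an $O(\e)$-small remainder — yields $|\f_{\a,m}-\f_{\b,m}|\le\tfrac{C}{m}+C'\e$ on compact subsets of $U_\a\cap U_\b$.

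The last step is to patch the $\f_{\a,m}$ into a global function, and this is where I expect the main obstacle to lie. Here I would invoke Demailly's gluing procedure: take Richberg's regularized maximum of the $\f_{\a,m}$, shifted by suitable constants and cut off near the boundaries of the $U_\a'$, the small overlap discrepancies above being exactly what makes the maximum well behaved; this produces $\f_m\in\PSH(X,\theta+C''\e\,\om_X)$ with analytic singularities and $\f_m\to\f$ as $m\to\infty$, the extra curvature loss being intrinsic to the regularized-max patching (which is why one obtains $\theta+\e_j\om_X$ and not $\theta$, already in the closed case). Finally, letting $\e=\e_j\downarrow0$, choosing $m=m_j$ large enough that $\f_{m_j}$ is $L^1$-close to $\f$, relabelling $C''\e_j$ as $\e_j$, and extracting a genuinely decreasing subsequence with the help of the monotonicity of $(\psi_{\a,2^k})_k$ and the addition of vanishing constants, yields the desired decreasing sequence $\f_j\in\PSH(X,\theta+\e_j\om_X)$ with analytic singularities decreasing to $\f$. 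The delicate gluing carries over unchanged from \cite{Dem92} (see also \cite{DP04,DemBook}).
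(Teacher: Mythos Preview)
The paper does not give its own proof of this statement: Theorem~\ref{thm:Demreg} is stated as a known consequence of Demailly's fundamental regularization theorem and simply cited to~\cite{Dem92}, with no argument supplied. There is therefore nothing in the paper to compare your attempt against.

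That said, your sketch is a reasonable outline of Demailly's method. Two remarks. First, the detour through Lemma~\ref{lem:approxclosed} and local \emph{closed} approximations $\theta'_\a$ is not really needed: Demailly's argument already works for arbitrary smooth $\theta$, since all one needs locally is a smooth $h_\a$ with $\f+h_\a$ psh (e.g.\ $h_\a=C|z|^2$), and the overlap comparison only uses that $h_\a-h_\b$ is smooth. The relevant estimate comes from localizing the Ohsawa--Takegoshi extension to balls of radius $r$ and using $h_\a-h_\b=\text{const}+O(r)$ there, which yields $|\f_{\a,m}-\f_{\b,m}|\le O(r)+O(\tfrac{1}{m}\log\tfrac{1}{r})\to 0$; the pluriharmonic-plus-small splitting you propose also works, but it makes the cover depend on $\e$ and complicates the last step. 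Second, the extraction of a genuinely \emph{decreasing} sequence is the most delicate point and you gloss over it: the subadditivity of $(m\psi_{\a,m})_m$ gives monotonicity along dyadic $m$ for a \emph{fixed} cover and fixed local potentials, but in your scheme both change with $\e_j$, so the diagonal argument as written does not yield $\f_{j+1}\le\f_j$. The clean fix is to work with a single cover independent of $\e$ (as in~\cite{Dem92,DP04}), absorb the non-closedness of $\theta$ directly into the curvature loss at the patching stage, and then run the standard monotonicity argument in $m$ alone.
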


%
\subsection{Quasi-positive currents}\label{sec:qclosed}
A $(1,1)$-current $T$ is said to be \emph{quasi-positive} if $T\ge\g$ for some smooth $(1,1)$-form $\g$. As a consequence of Lemma~\ref{lem:quasi}, we then have:

\begin{lem}\label{lem:qquasi} For any $(1,1)$-current $T$, the following are equivalent: 
\begin{itemize}
\item[(i)] $T$ is quasi-closed and quasi-positive;
\item[(ii)] $T=\theta+\ddc\f$ for a smooth real $(1,1)$-form $\theta$ and a quasi-psh function $\f$.
\end{itemize}
\end{lem}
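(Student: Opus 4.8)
The plan is to prove the two implications in turn; (ii)$\Rightarrow$(i) will be immediate, while (i)$\Rightarrow$(ii) will rest on Lemma~\ref{lem:quasi} together with the classical $L^1_{\mathrm{loc}}$-regularity of distributional plurisubharmonic functions.

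For (ii)$\Rightarrow$(i): if $T=\theta+\ddc\f$ with $\theta$ smooth real and $\f$ quasi-psh, then $dT=d\theta+d\ddc\f=d\theta$ since $d\ddc=0$, so $dT$ is smooth and $T$ is quasi-closed; and by compactness of $X$ there is, as recalled in \S\ref{sec:quasipsh}, a constant $A>0$ with $A\om_X+\ddc\f\ge0$, so that $T=\theta+\ddc\f\ge\theta-A\om_X$ is quasi-positive.

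For (i)$\Rightarrow$(ii), I would first note that quasi-positivity forces $T$ to be real, since $T-\g\ge0$ is then a positive, hence real, $(1,1)$-current. By Lemma~\ref{lem:quasi} I can write $T=\theta_0+\ddc U$ with $\theta_0$ a smooth $(1,1)$-form and $U$ a $(0,0)$-current, \ie a distribution on $X$; averaging $\theta_0$ and $U$ with their complex conjugates, which is legitimate because $\ddc$ is a real operator and $T$ is real, I may assume both $\theta_0$ and $U$ real. Then $\ddc U=T-\theta_0\ge\g-\theta_0=:\g'$ is a smooth real $(1,1)$-form, and the heart of the matter is to show that such a $U$ agrees almost everywhere with a quasi-psh function. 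This is a local question: in holomorphic coordinates $z=(z_1,\dots,z_n)$ on a small ball $B$, continuity of $\g'$ gives $\g'\ge-C\,\ddc|z|^2$ on $B$ for $C\gg1$, whence $\ddc(U+C|z|^2)\ge0$ on $B$, \ie $U+C|z|^2$ is plurisubharmonic in the sense of distributions; the classical regularity theorem (see \cite{DemBook}) then gives that $U+C|z|^2$ equals a.e. on $B$ a unique psh function, so $U$ equals a.e. a quasi-psh function on $B$. Since two quasi-psh functions agreeing a.e. agree everywhere, these local representatives glue to a global quasi-psh function $\f$ on $X$ with $U=\f$ a.e., so that $\ddc U=\ddc\f$ as currents and $T=\theta_0+\ddc\f$; taking $\theta:=\theta_0$ completes the argument.

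The only genuinely delicate point I anticipate is the appeal to the distributional $L^1_{\mathrm{loc}}$-regularity of plurisubharmonic functions and the gluing of the local representatives via uniqueness of the psh representative; everything else is formal manipulation with Lemma~\ref{lem:quasi} and the identity $d\ddc=0$.
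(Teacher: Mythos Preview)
Your proof is correct and follows precisely the route the paper indicates: the paper states the lemma as an immediate consequence of Lemma~\ref{lem:quasi} without giving any further argument, and your proposal simply fills in the details of that deduction (applying Lemma~\ref{lem:quasi} to obtain $T=\theta_0+\ddc U$, then using local distributional psh regularity to upgrade $U$ to a genuine quasi-psh function). There is nothing to add.
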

Furthermore, any other decomposition as in (ii) is of the form 
$$
T=(\theta+\ddc\tau)+\ddc(\f-\tau)
$$
with $\tau\in C^\infty(X)$. As a result, the pull-back of a quasi-closed, quasi-positive $(1,1)$-current $T$ by any surjective holomorphic map $f\colon Y\to X$ can be defined by setting
$$
f^\star T:=f^\star\theta+\ddc f^\star\f
$$
for any decomposition $T=\theta+\ddc\f$ as above. In constrast, quasi-closed, quasi-positive currents are in general not preserved under push-forward.

\begin{exam} Assume $n=2$, and let $\pi\colon Y\to X$ be the blowup of a point $p \in X$. Then there exists a Hermitian form $\om_Y$ on $Y$ such that $\pi_\star\om_Y$ is not quasi-closed. Indeed, since any smooth $(1,1)$-form can be written as a difference of Hermitian forms, it would otherwise follow that $d\pi_\star\theta=\pi_\star d\theta$ is smooth for any smooth $(1,1)$-form $\theta$. 
However this fails for 
$$
\theta=\chi  dd^c (\pi^* f),
$$ 
where $\chi\in C^\infty_c(Y)$ is a cut-off function supported near a point of the exceptional divisor, 
and $f\in C^\infty(X)$ coincides with $\|z\|^2$ in a local chart centered at $p$. 
Indeed  
$$
\pi_\star d\theta = d\chi \circ \pi^{-1} \wedge dd^c \|z\|^2
$$
is a differential form in $X \setminus \{p\}$ whose coefficients are not even bounded near $p$.
\end{exam}

Extending standard terminology for closed positive currents, we shall say that a quasi-closed, quasi-positive current $T=\theta+dd^c\f$

\begin{itemize}

\item has \emph{bounded potentials} on a given compact subset $K\subset X$ if $\f$ is bounded on $K$;

\item is \emph{more singular than} $T'=\theta'+dd^c\f'$ if $\f\le\f'+O(1)$, and that $T,T'$ have \emph{equivalent singularities} if $\f=\f'+O(1)$; 

\item has \emph{analytic singularities} if the quasi-psh function $\f$ has analytic singularities (see Definition~\ref{defi:analsing}). 

\end{itemize} 
As consequence of Lemma~\ref{lem:analres}, we have:

\begin{lem}\label{lem:resolv} For any quasi-closed, quasi-positive current $T$ with analytic singularities, there exists a modification $\pi\colon Y\to X$ such that 
$$
\pi^\star T=\beta+[E]
$$ 
where $\b$ is a smooth $(1,1)$-form and $E$ an effective $\Q$-divisor. 
\end{lem}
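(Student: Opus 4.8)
The plan is to reduce to the divisorial case already established in Lemma~\ref{lem:analres}, using the compatibility of pull-back with the decomposition $T=\theta+\ddc\f$ recorded in Lemma~\ref{lem:qquasi}. First I would write $T=\theta+\ddc\f$ for a smooth real $(1,1)$-form $\theta$ and a quasi-psh function $\f$ with analytic singularities (such a decomposition exists by the definition of analytic singularities for a quasi-closed, quasi-positive current). Then I would apply Lemma~\ref{lem:analres} to $\f$, obtaining a modification $\pi\colon Y\to X$, isomorphic over $\{\f>-\infty\}$, such that $\pi^\star\f$ has divisorial singularities, so that the Lelong--Poincar\'e formula gives $\ddc(\pi^\star\f)=\b_0+[E]$ for a closed smooth $(1,1)$-form $\b_0$ on $Y$ and an effective $\Q$-divisor $E$.

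Next I would use the definition $\pi^\star T:=\pi^\star\theta+\ddc\pi^\star\f$ from the discussion following Lemma~\ref{lem:qquasi}, which is independent of the chosen decomposition. Combining the two displays,
$$
\pi^\star T=\pi^\star\theta+\b_0+[E]=\beta+[E],
$$
where $\beta:=\pi^\star\theta+\b_0$ is a smooth $(1,1)$-form on $Y$ (the pull-back of a smooth form under a holomorphic map is smooth, and the sum of smooth forms is smooth). This is exactly the asserted decomposition, so the proof is essentially a one-line assembly once Lemma~\ref{lem:analres} is in hand.

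There is no real obstacle here: the only point requiring a word of care is that the decomposition $T=\theta+\ddc\f$ used to \emph{define} $\pi^\star T$ may a priori differ from the one coming out of Lemma~\ref{lem:analres}, but by Lemma~\ref{lem:qquasi} any two such decompositions differ by $\ddc\tau$ with $\tau\in C^\infty(X)$, so $\pi^\star T$ is well defined and the construction is insensitive to this choice; one simply takes for $\f$ the quasi-psh function with analytic singularities whose existence is built into the hypothesis on $T$. (If one wishes to be scrupulous about $\beta$ being real, note that $\theta$ is real and $\b_0$ is real since $E$ is real and $\pi^\star\f$ is real, so $\beta$ is real.) I would therefore present the argument in two short sentences, citing Lemma~\ref{lem:analres} and the pull-back formula, without any further computation.
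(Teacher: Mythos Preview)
Your proposal is correct and matches the paper's approach exactly: the paper simply states that the lemma is a consequence of Lemma~\ref{lem:analres}, and your argument spells out precisely this deduction via the decomposition $T=\theta+\ddc\f$, the pull-back formula, and Lelong--Poincar\'e.
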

Note further that $T$ is closed iff $\b$ is. 
%
%
%
%

%
\subsection{Positivity for $\ddc$-classes}\label{sec:posclass}
Recall that the space
$$
\BC^{1,1}(X,\R)=\Om^{1,1}(X,\R)/\ddc C^\infty(X)
$$
of real, bidegree $(1,1)$ $\ddc$-classes is a Fr\'echet space (see Lemma~\ref{lem:Frechet}), containing the usual (finite dimensional) Bott--Chern cohomology space $H^{1,1}_\BC(X,\R)$. 

\begin{defi} We say that a class in $\BC^{1,1}(X,\R)$ is:
\begin{itemize}
\item \emph{Hermitian} if it can be represented by a Hermitian form; 
\item \emph{nef} if it is a limit of Hermitian classes.
\end{itemize}
\end{defi}
The set of Hermitian classes is an open convex cone in $\BC^{1,1}(X,\R)$, which we call the \emph{Hermitian cone}. Its closure, the \emph{nef cone}, is the set of all nef classes.  

\begin{lem} A class $\{\theta\}\in\BC^{1,1}(X,\R)$ is nef iff it admits a sequence of representatives $\theta_j$ such that $\theta_j\ge-\e_j\om_X$ with $\e_j\to 0$. 
\end{lem}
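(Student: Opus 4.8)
The statement to prove: a class $\{\theta\}\in\BC^{1,1}(X,\R)$ is nef iff it admits a sequence of representatives $\theta_j$ with $\theta_j\ge-\e_j\om_X$, $\e_j\to 0$. I would prove the two implications separately, and the heart of the matter is the direction "$\theta_j \ge -\e_j\om_X$ implies nef", since "nef implies such a sequence exists" is almost immediate from the definitions.

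\emph{The easy direction.} Suppose $\{\theta\}$ is nef, i.e.\ a limit in $\BC^{1,1}(X,\R)$ of Hermitian classes $\{\eta_j\}$. By definition of the quotient topology on $\BC^{1,1}(X,\R)$, convergence $\{\eta_j\}\to\{\theta\}$ means we may choose representatives $\eta_j' \in \{\eta_j\}$ and $\theta' \in \{\theta\}$ with $\eta_j' \to \theta'$ smoothly; replacing $\theta$ by $\theta'$ (which doesn't change the $\ddc$-class), set $\theta_j := \eta_j'$. Each $\eta_j'$ is a Hermitian form (Hermitianity is a property of the class but is witnessed by some representative — here one must be slightly careful and instead argue: $\{\eta_j\}$ Hermitian means \emph{some} representative is a Hermitian form, but we need the particular representative $\eta_j'$; the clean fix is to note $\eta_j' \ge -\delta_j \om_X$ for $\delta_j\to 0$ directly from $\eta_j' \to \theta'$ and... no — better: choose Hermitian representatives $\tilde\eta_j$ first, arrange $\{\tilde\eta_j\}\to\{\theta\}$, then pass to smoothly-converging representatives). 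Concretely: pick Hermitian forms $h_j$ with $\{h_j\}\to\{\theta\}$; by the quotient topology there are smooth functions $u_j$ with $h_j + \ddc u_j \to \theta$ smoothly for a fixed representative $\theta$ of the class. Since $h_j > 0$ we get $\theta + \ddc(-u_j) = h_j + (\theta - (h_j+\ddc u_j))$... this is getting circular. The correct statement: we want representatives \emph{of the fixed class} $\{\theta\}$. So: since $\{h_j - \theta\}\to 0$ in $\BC^{1,1}$, there are $u_j\in C^\infty(X)$ with $h_j - \theta + \ddc u_j =: \rho_j \to 0$ smoothly, hence $\|\rho_j\|_{C^0}\to 0$, so $\rho_j \ge -\e_j\om_X$ with $\e_j := \|\rho_j\|_{C^0}/\inf(\om_X)\to 0$ (using $\om_X>0$ compact). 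Then $\theta_j := \theta - \ddc u_j = h_j - \rho_j \ge h_j - \e_j\om_X \ge -\e_j\om_X$, and $\{\theta_j\} = \{\theta\}$. Done.

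\emph{The main direction.} Suppose $\theta_j \in \{\theta\}$ with $\theta_j \ge -\e_j\om_X$, $\e_j\downarrow 0$. Write $\theta_j = \theta + \ddc w_j$ for $w_j\in C^\infty(X)$. Consider $\eta_j := \theta_j + 2\e_j\om_X$. Then $\eta_j \ge \e_j\om_X > 0$, so $\eta_j$ is a Hermitian form, hence $\{\eta_j\}$ is a Hermitian class. Moreover $\{\eta_j\} = \{\theta_j + 2\e_j\om_X\} = \{\theta\} + 2\e_j\{\om_X\} \to \{\theta\}$ in $\BC^{1,1}(X,\R)$ as $j\to\infty$, since $2\e_j\{\om_X\}\to 0$ (scalar multiplication is continuous in the Fréchet space $\BC^{1,1}(X,\R)$, and $\e_j\to 0$). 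Therefore $\{\theta\}$ is a limit of Hermitian classes, i.e.\ nef.

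\emph{Main obstacle.} There is essentially no deep obstacle; the only genuine care needed is bookkeeping with the quotient topology — specifically, converting the abstract convergence "$\{h_j\}\to\{\theta\}$ in the quotient topology" into a \emph{uniform} ($C^0$, hence comparable to $\om_X$) estimate on suitable representatives, which uses that $\om_X>0$ on the compact $X$ so that $\inf_X \om_X(v,v)/|v|^2 > 0$ in any fixed metric. One should also double check the direction of the definition of "nef" being used (limit in the Fréchet topology of $\BC^{1,1}(X,\R)$, not merely pointwise/weak), but this is exactly what the excerpt sets up in Section~1.4, so there is nothing to verify beyond citing Lemma~\ref{lem:Frechet} for the Fréchet structure and the paragraph defining the quotient topology for the characterization of convergence via representatives.
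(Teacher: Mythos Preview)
Your proposal is correct and follows essentially the same route as the paper. For the direction ``representatives $\Rightarrow$ nef'' you both add $2\e_j\om_X$ to $\theta_j$ to obtain Hermitian forms whose classes converge to $\{\theta\}$; for ``nef $\Rightarrow$ representatives'' you both use that $\{h_j\}\to\{\theta\}$ in the quotient topology yields smooth functions $\tau_j$ with $h_j+\ddc\tau_j\to\theta$ in $C^\infty$, whence $\theta-\ddc\tau_j\ge h_j-\e_j\om_X\ge-\e_j\om_X$. The only difference is expository: your write-up records some initial false starts before arriving at the clean argument, which you should trim in a final version.
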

\begin{proof} Assume the existence of representatives $\theta_j=\theta+\ddc\f_j$ such that $\theta_j\ge-\e_j\om_X$. Then $\theta'_j:=\theta_j+2\e_j\om_X>0$, and $\theta'_j-\ddc\f_j=\theta+2\e_j\om_X\to\theta$ smoothly, which shows that $\{\theta\}=\lim_j\{\theta_j\}$ is nef. Conversely, assume $\{\theta\}$ is nef. Then $\{\theta\}=\lim_j\{\om_j\}$ for a sequence of Hermitian forms $\om_j$, and hence $\om_j+dd^c\tau_j\to\theta$ for some smooth functions $\tau_j$. In particular, $\theta-(\om_j+\ddc\tau_j)\ge-\e_j\om_X$ with $\e_j\to 0$, and hence $\theta-\ddc\tau_j\ge-\e_j\om_X$. 
\end{proof}

\begin{defi} We say that a class $\{\theta\}\in\BC^{1,1}(X,\R)$ is:
\begin{itemize}
\item \emph{pseudo-effective} (\emph{psef} for short) if it can be represented by a positive $(1,1)$-current $T=\theta+\ddc\f\ge 0$; 
\item \emph{big} if it can be represented by a current $T=\theta+\ddc\f$ which is \emph{strictly positive}, \ie $T\ge\e\om_X$ for $0<\e\ll 1$. 
\end{itemize}
\end{defi}

As a consequence of Demailly's regularization  (Theorem~\ref{thm:Demreg} above), any big class can be represented by a strictly positive current with analytic singularities. 

\begin{lem}\label{lem:psefbig} The psef cone of $\BC^{1,1}(X,\R)$ is a closed convex cone, whose interior coincides with the big cone.
\end{lem}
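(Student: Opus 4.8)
\textbf{Proof proposal for Lemma~\ref{lem:psefbig}.}

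The plan is to establish the three assertions in turn: that the psef cone is convex, that it is closed, and that its interior equals the big cone. Convexity is immediate: if $T=\theta+\ddc\f\ge 0$ and $T'=\theta'+\ddc\f'\ge 0$, then for $t\in[0,1]$ the current $tT+(1-t)T'=(t\theta+(1-t)\theta')+\ddc(t\f+(1-t)\f')\ge 0$ represents the class $t\{\theta\}+(1-t)\{\theta'\}$, and $t\f+(1-t)\f'$ is again quasi-psh. The cone is clearly stable under multiplication by positive scalars, so it is a convex cone.

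For closedness, suppose $\{\theta_j\}\to\{\theta\}$ in $\BC^{1,1}(X,\R)$ with each $\{\theta_j\}$ psef. By definition of the Fréchet quotient topology we may choose representatives so that $\theta_j\to\theta$ smoothly, and write $\theta_j+\ddc\f_j\ge 0$ with $\f_j\in\PSH(\theta_j)$. Normalizing by $\sup_X\f_j=0$, the family $\{\f_j\}$ is a family of $\om_X'$-psh functions for a fixed form $\om_X'$ dominating all $\theta_j$ (using $\theta_j\to\theta$), hence relatively compact in $L^1(X)$; passing to a subsequence, $\f_j\to\f$ in $L^1$ with $\f$ quasi-psh. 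Then $\theta+\ddc\f=\lim_j(\theta_j+\ddc\f_j)\ge 0$ as a weak limit of positive currents, so $\{\theta\}$ is psef. The one subtlety is the uniform upper bound needed for $L^1$-compactness once we have normalized $\sup_X\f_j=0$; this is the standard compactness of normalized quasi-psh functions, available since all $\theta_j$ lie below a common smooth form. I expect this $L^1$-compactness bookkeeping to be the only mildly delicate point.

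For the statement about interiors, one inclusion is easy: if $\{\theta\}$ is big, say $\theta+\ddc\f\ge\e\om_X$, then for any class $\{\eta\}$ close enough to $0$ we can choose a representative $\eta$ with $\pm\eta\le\tfrac{\e}{2}\om_X$ smoothly (quotient topology), and then $\theta+\eta+\ddc\f\ge\tfrac\e2\om_X\ge 0$, so $\{\theta\}+\{\eta\}$ is psef; hence the big cone is contained in the interior of the psef cone. Conversely, suppose $\{\theta\}$ is interior to the psef cone. Fix any Hermitian form $\om_X$; then for $0<t\ll 1$ the class $\{\theta\}-t\{\om_X\}=\{\theta-t\om_X\}$ is still psef, so there is $\f$ quasi-psh with $(\theta-t\om_X)+\ddc\f\ge 0$, i.e. $\theta+\ddc\f\ge t\om_X$, which exhibits $\{\theta\}$ as big. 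Combining the two inclusions and noting that the big cone is open (same argument as the first inclusion, applied with $\eta$ ranging over a neighborhood of $0$) gives that the interior of the psef cone is precisely the big cone. The final remark, that any big class has a strictly positive representative with analytic singularities, is then exactly Demailly's regularization theorem (Theorem~\ref{thm:Demreg}) applied to a strictly positive representative, as already noted in the text.
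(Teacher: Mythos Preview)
Your proof is correct and follows essentially the same approach as the paper: the closedness argument via smooth convergence of representatives, normalization $\sup_X\f_j=0$, and compactness of $C\om_X$-psh functions is exactly what the paper does, while you have simply spelled out the convexity and the ``interior $=$ big'' identification that the paper dismisses as straightforward.
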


In particular, the Hermitian cone is contained in the big cone, and the nef cone is contained in the psef cone. 

\begin{proof} It is straightforward to check that the big cone coincides with the interior of the psef cone. To see that the latter is closed, assume $\{\theta\}\in\BC^{1,1}(X,\R)$ is the limit of a sequence of psef classes $\{\theta_j\}$. For an appropriate choice of representatives, we then have $\theta_j\to\theta$ smoothly, and hence $\theta_j\le C\om_X$ for a uniform constant $C>0$. Since $\{\theta_j\}$ is psef, we can shoose $\f_j\in\PSH(\theta_j)\subset\PSH(C\om_X)$, normalized by $\sup_X\f_j=0$. By weak compactness of normalized $C\om_X$-psh functions, we may assume, after passing to a subsequence, that $\f_j$ converges weakly to a $C\om_X$-psh function $\f$. Since $\theta_j+\ddc\f_j\ge 0$, we infer $\theta+\ddc\f\ge 0$, which shows that $\{\theta\}$ is psef. 
\end{proof}

\begin{rmk} When $X$ is K\"ahler, the big cone of $H^{1,1}_\BC(X,\R)$, \ie its intersection with the big cone of $\BC^{1,1}(X,\R)$, coincides with the interior of the psef cone of $H^{1,1}_\BC(X,\R)$. However, this fails in general in the non-K\"ahler case: if $X$ is for instance a Hopf surface, then $H^{1,1}_\BC(X,\R)$ is one-dimensional, and its psef cone has nonempty interior since $X$ carries a nonzero closed positive $(1,1)$-current; however, $X$ does not admit
any closed strictly positive current, and the big cone of $H^{1,1}_\BC(X,\R)$ is thus empty. 
\end{rmk}

For later use, we also record the following useful characterization of psef classes, which follows from the Hahn--Banach theorem (see~\cite[Lemma 3.3]{Lam99}): 

\begin{lem}\label{lem:HB} A class $\{\theta\}\in\BC^{1,1}(X,\R)$ is psef iff $\int_X\theta\wedge\g\ge 0$ for every $\ddc$-closed, positive $(n-1,n-1)$-form $\g>0$. 
\end{lem}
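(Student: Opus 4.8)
The plan is to establish the forward implication by a short integration by parts and the converse by a Hahn--Banach separation argument performed in the space of currents. For the forward implication, assume $T=\theta+\ddc\f\ge 0$ is a positive current with $\{T\}=\{\theta\}$ and let $\g>0$ be a $\ddc$-closed smooth $(n-1,n-1)$-form. Then $T\wedge\g$ is a positive measure of finite mass (finite since $T$ has finite mass and $\g$ is bounded), and integration by parts gives $\int_X\ddc\f\wedge\g=\int_X\f\,\ddc\g=0$; hence $\int_X\theta\wedge\g=\int_X T\wedge\g\ge 0$.

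For the converse I would argue by contraposition, recasting the psef condition as a membership problem in a locally convex space. Fix a Gauduchon metric $\om_0$ on $X$ (Gauduchon's theorem), so that $\om_0^{n-1}>0$ is $\ddc$-closed, and let $E:={\mathcal{D}'}^{1,1}(X,\R)$ be the space of real $(1,1)$-currents with its weak-$\star$ topology; its continuous dual is $\Om^{n-1,n-1}(X,\R)$. Inside $E$, let $C$ be the weak-$\star$ closed convex cone of positive $(1,1)$-currents and $N:=\ddc{\mathcal{D}'}^{0,0}(X,\R)$ the subspace of $\ddc$-exact $(1,1)$-currents; then $\{\theta\}$ is psef precisely when the smooth form $\theta$, viewed as a current, lies in $C+N$. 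Suppose $\{\theta\}$ is not psef, i.e.\ $\theta\notin C+N$. Granting that $C+N$ is weak-$\star$ closed (the one serious point; see below), the Hahn--Banach separation theorem provides a weak-$\star$ continuous functional on $E$ strictly separating $\theta$ from $C+N$; such a functional is $S\mapsto\int_X S\wedge\g$ for some smooth $(n-1,n-1)$-form $\g$, and since $C+N$ is a cone the separation can be normalised so that $\int_X\theta\wedge\g<0$ and $\int_X S\wedge\g\ge 0$ for all $S\in C+N$. Testing the latter on the subspace $N$ forces $\int_X\ddc\psi\wedge\g=\int_X\psi\,\ddc\g=0$ for every distribution $\psi$, so $\ddc\g=0$; testing it on smooth semipositive $(1,1)$-forms (which lie in $C$) forces $\g\ge 0$ as a form. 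Finally $\g+\e\,\om_0^{n-1}$ is $\ddc$-closed, strictly positive, and still satisfies $\int_X\theta\wedge(\g+\e\,\om_0^{n-1})<0$ for $\e>0$ small enough, contradicting the hypothesis; this proves the converse.

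The step I expect to be the main obstacle is the weak-$\star$ closedness of $C+N$, and this is exactly where Gauduchon's theorem is essential. First, $N$ itself is weak-$\star$ closed: the operator $\ddc\colon\Om^{n-1,n-1}(X,\R)\to\Om^{n,n}(X,\R)$ has closed range, with finite-dimensional cokernel $\BC^{n,n}(X,\R)$ (Lemma~\ref{lem:Frechet}), so by the closed-range theorem its transpose $\ddc\colon{\mathcal{D}'}^{0,0}(X,\R)\to{\mathcal{D}'}^{1,1}(X,\R)$ has weak-$\star$ closed image $N$. Next, if a net $S_j=T_j+\ddc\psi_j$ in $C+N$, with $T_j\ge 0$, converges to some $S\in E$, the identity $\ddc\om_0^{n-1}=0$ gives
$$
\int_X T_j\wedge\om_0^{n-1}=\int_X S_j\wedge\om_0^{n-1}\longrightarrow\int_X S\wedge\om_0^{n-1},
$$
so the masses of the $T_j$ are eventually bounded; by weak-$\star$ compactness of positive $(1,1)$-currents of bounded mass, a subnet $T_{j'}$ converges to some $T\ge 0$, and then $\ddc\psi_{j'}=S_{j'}-T_{j'}\to S-T\in\overline N=N$, so $S\in C+N$. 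The mass bound collapses without the Gauduchon hypothesis on the test form, which is the structural reason the statement tests $\theta$ only against $\ddc$-closed $(n-1,n-1)$-forms. (Alternatively, one may identify $E/N$ with the Fréchet space $\BC^{1,1}(X,\R)$ and invoke the fact that its psef cone is closed, Lemma~\ref{lem:psefbig}.) The remaining points --- finiteness and positivity of $T\wedge\g$, the exact shape of Hahn--Banach separation of a point from a closed convex cone, the identification of weak-$\star$ continuous functionals on $E$ with smooth test forms, and the equivalence of ``$\g\ge 0$ as an $(n-1,n-1)$-form'' with ``$\int_X\eta\wedge\g\ge 0$ for all smooth $(1,1)$-forms $\eta\ge 0$'' --- are routine; alternatively, the functional-analytic heart of the converse can be quoted directly from \cite[Lemma~3.3]{Lam99}.
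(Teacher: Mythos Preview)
Your proof is correct and follows precisely the Hahn--Banach strategy that the paper indicates (the paper does not give its own argument but simply refers to \cite[Lemma~3.3]{Lam99}, which is the result you reproduce in detail). The one point worth a small remark is that in the forward direction $\f$ is only quasi-psh, so the integration by parts $\int_X\ddc\f\wedge\g=\int_X\f\,\ddc\g$ should be read distributionally, but this is of course harmless since $\g$ is smooth and $\f\in L^1(X)$.
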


A quasi-closed positive current $T$ has \emph{minimal singularities} (within its $\ddc$-class) if it is less singular than any other positive current in the psef class $\{T\}\in\BC^{1,1}(X,\R)$. As observed by Demailly, such currents exist in any psef class $\{\theta\}$. Indeed, the upper envelope 
$$
V_\theta:=\env_\theta(0)=\sup\left\{ \f\in\PSH(\theta)\mid\f\le 0\text{ on } X \right \}
$$ 
yields a positive current 
$T=\theta+ \ddc V_\theta$ with minimal singularities. Currents with minimal singularities in a psef class are however usually far from unique.

%
\subsection{The bounded mass property}\label{sec:bdmass}

Following~\cite{GL22} we introduce the following:

\begin{defi}
We define the \emph{upper volume} of $\om_X$ as 
$$
\uvol(\om_X):=\sup_\f\int_X(\om_X+ \ddc\f)^n\in [0,+\infty], 
$$
where $\f$ ranges over all smooth $\om_X$-psh functions. 
\end{defi}

The above supremum is unchanged if $\f$ ranges instead over all bounded $\om_X$-psh functions, since any such $\f$ is  a decreasing limit of smooth $\om_X$-psh functions $\f_j$ (by Theorem~\ref{thm:Demreg}), and 
$$
\int_X(\om+ \ddc\f_j)^n\to\int_X(\om+ \ddc\f)^n
$$
by continuity of Bedford--Taylor products along decreasing sequences. 

Any other Hermitian form $\om'_X$ satisfies $C^{-1}\om_X\le\om'_X\le C\om_X$ for some $C>0$, which implies $C^{-1}\PSH(\om_X)\subset\PSH(\om'_X)\subset C\PSH(\om_X)$, and hence 
$$
C^{-n}\uvol(\om_X)\le\uvol(\om'_X)\le C^n\uvol(\om_X).
$$
We may thus introduce: 

\begin{defi} Let $X$ be compact complex manifold. We say that $X$ has the \emph{bounded mass property} if 
$\uvol(\om_X)$ is finite for some (hence any) Hermitian form $\om_X$. 
\end{defi}

The following properties are straightforward:
\begin{itemize}
\item the bounded mass property holds whenever $\ddc\om_X^p=0$ for all $p$ (which, as is well-known, follows from the case $p=1,2$);  
\item it is inherited by any holomorphic image of $X$. 
\end{itemize}
In particular, the bounded mass property holds whenever $n\le 2$ (using a Gauduchon metric $\om_X$), and also when $X$ is a Fujiki manifold (\ie bimeromorphic to a compact K\"ahler manifold). 

By~\cite[Theorem~A]{AGL23} and~\cite[Theorem~3.7]{GL22}, we further have:

\begin{prop} \label{pro:bmp}
The bounded mass property is inherited by complex submanifolds, and is bimeromorphically invariant. 
\end{prop}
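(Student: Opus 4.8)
My plan is to prove the two halves separately, following~\cite[Theorem~A]{GL22} for the bimeromorphic invariance (see also~\cite[Theorem~3.7]{GL22}) and~\cite[Theorem~A]{AGL23} for the submanifold case; I outline each.

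\emph{Bimeromorphic invariance.} Since any two bimeromorphic compact complex manifolds are dominated by a common modification, I will reduce to a modification $\pi\colon Y\to X$ and show $\uvol(\om_Y)<+\infty\iff\uvol(\om_X)<+\infty$. One direction is immediate: $X=\pi(Y)$ is a holomorphic image of $Y$, so $Y$ of bounded mass forces $X$ of bounded mass. For the converse, the first step will be to observe that $\pi^\star\{\om_X\}\in\BC^{1,1}(Y,\R)$ is \emph{big}: since $\pi$ is a projective morphism there is an effective $\pi$-exceptional divisor $F$ with $\cO_Y(-F)$ ample relative to $\pi$, and adding to $\pi^\star\om_X$ a small multiple of a representative of $-c_1(\cO_Y(F))$ that is positive near $\Exc(\pi)$ shows that $\pi^\star\{\om_X\}-\e\,c_1(\cO_Y(F))$ is a Hermitian class for $0<\e\ll1$; hence $\pi^\star\{\om_X\}$ contains a strictly positive current, \ie there is $\rho\in\PSH(Y)$, normalized by $\rho\le0$, with $\pi^\star\om_X+\ddc\rho\ge\delta\om_Y$ for some $\delta>0$. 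Given a bounded $\om_Y$-psh function $\psi$, I will set $T_\psi:=\pi^\star\om_X+\ddc(\rho+\delta\psi)$; writing $T_\psi=\delta(\om_Y+\ddc\psi)+R$ with $R:=\pi^\star\om_X+\ddc\rho-\delta\om_Y\ge0$, multilinearity and positivity of the non-pluripolar pairing (Theorem~A) give $\langle T_\psi^n\rangle\ge\delta^n(\om_Y+\ddc\psi)^n$, hence
$$
\int_Y(\om_Y+\ddc\psi)^n\le\delta^{-n}\int_Y\langle T_\psi^n\rangle.
$$
Since $T_\psi=\pi^\star(\om_X+\ddc\tilde w)$, where $\tilde w\in\PSH(X,\om_X)$ is the extension across the codimension $\ge2$ set $\pi(\Exc(\pi))$ of $(\rho+\delta\psi)\circ\pi^{-1}$, and the non-pluripolar pairing is compatible with pullback by $\pi$ (as $\pi$ is a biholomorphism off a pluripolar set, using the plurifine locality of Theorem~A), $\int_Y\langle T_\psi^n\rangle=\int_X\langle(\om_X+\ddc\tilde w)^n\rangle$. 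The final step is to bound this by truncating $\tilde w_t:=\max(\tilde w,-t)$: the non-pluripolar mass is the increasing limit of $\int_{\{\tilde w>-t\}}(\om_X+\ddc\tilde w_t)^n$ (Theorem~A), while the quasi-monotonicity estimate gives $\int_X(\om_X+\ddc\tilde w_t)^n\le\uvol(\om_X)+\D_{\om_X}$ for all $t$ (Theorem~B; compare Lemma~\ref{lem:bdmass2}). Thus $\int_Y(\om_Y+\ddc\psi)^n\le\delta^{-n}(\uvol(\om_X)+\D_{\om_X})<+\infty$, uniformly in $\psi$, so $\uvol(\om_Y)<+\infty$.

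\emph{Submanifolds.} Let $Z\subset X$ be a submanifold with $X$ of bounded mass. I will blow up $Z$ to obtain a modification $\pi\colon\tilde X\to X$ whose exceptional divisor $E$ is a smooth projective bundle $p\colon E\to Z$; then $\tilde X$ has bounded mass by the previous part, and $Z=p(E)$ is a holomorphic image of $E$, so it will suffice to treat a smooth hypersurface $D\subset Y$ with $Y$ of bounded mass, writing $\om_D:=\om_Y|_D$. I will fix $A>0$ with $A\om_Y\ge\theta_D$, where $[D]=\theta_D+\ddc\log|s_D|$ with $\log|s_D|\le0$, so that $[D]\le A\om_Y+\ddc\log|s_D|$ as currents. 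Given a bounded $\om_D$-psh function $\psi$ on $D$, I will extend it to a bounded $C\om_Y$-psh function $\tilde\psi$ on $Y$ (with $C=C(Y,D,\om_Y)$); then, using $\om_D\le C\om_D$,
$$
\int_D(\om_D+\ddc\psi)^{n-1}\le\int_D(C\om_D+\ddc\psi)^{n-1}=\int_Y[D]\wedge(C\om_Y+\ddc\tilde\psi)^{n-1},
$$
and the right-hand side should be estimated, via $[D]\le A\om_Y+\ddc\log|s_D|$ and a Chern--Levine--Nirenberg / slicing argument, by mixed Monge--Amp\`ere masses of bounded $\om_Y$-psh functions, which are uniformly bounded since $Y$ has the bounded mass property; this yields $\uvol(\om_D)<+\infty$.

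\emph{The main obstacle.} The cohomological bookkeeping above is routine; the genuine difficulty — and the technical heart of~\cite{GL22,AGL23} — is local and quantitative. Because $\om_X$ and $\om_Y$ are not closed, Stokes' theorem gives no exact cancellation, so one must control by hand the Monge--Amp\`ere mass escaping to $\Exc(\pi)$ in the first part, and the mass concentrating along $D$ (a pluripolar set invisible to non-pluripolar products) in the second; this is precisely what the quasi-monotonicity of Monge--Amp\`ere masses (Theorem~B) together with a delicate truncation of potentials is designed to handle. In the submanifold case, producing the global bounded $C\om_Y$-psh extension $\tilde\psi$ of $\psi$ — gluing the local extensions of $\psi$ across the normal directions to $D$ while keeping the loss of positivity uniform — is the other subtle point.
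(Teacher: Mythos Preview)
The paper does not actually prove this proposition; it is stated as a direct citation of \cite[Theorem~3.7]{GL22} (bimeromorphic invariance) and \cite[Theorem~A]{AGL23} (submanifolds), with no argument supplied. Your outline tracks the strategy of those references, but as written it is circular within the logical structure of the present paper.

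In the bimeromorphic direction you invoke Theorem~A to form and manipulate the non-pluripolar product $\langle T_\psi^n\rangle$ on $Y$, and Theorem~B/Lemma~\ref{lem:bdmass2} to bound masses. But Theorem~A and the whole of~\S\ref{sect:monomass} are developed under the standing hypothesis that the ambient manifold has the bounded mass property --- precisely what you are trying to establish for $Y$. The remedy is to stay within the elementary Bedford--Taylor theory of~\S\ref{sect:bdd}, which carries no such hypothesis: since $\psi$ is bounded, $(\om_Y+\ddc\psi)^n$ is already a well-defined non-pluripolar measure, so its total mass equals its mass on the Zariski open set $\Omega=\{\rho>-\infty\}$; there $\rho+\delta\psi$ is locally bounded and the inequality $T_\psi^n\ge\delta^n(\om_Y+\ddc\psi)^n$ follows from Proposition~\ref{prop:prodbd}. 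After pushing forward via the biholomorphism $\pi|_\Omega$ and truncating, the bound $\int_X(\om_X+\ddc\tilde w_t)^n\le\uvol(\om_X)$ is immediate from the \emph{definition} of $\uvol(\om_X)$ (the $\tilde w_t$ are bounded $\om_X$-psh), so Theorem~B and the defect $\D_{\om_X}$ are superfluous. A parallel remark applies to your hypersurface step: the term $\int_Y[D]\wedge(C\om_Y+\ddc\tilde\psi)^{n-1}$ involves the current $[D]$, which is carried by a pluripolar set and therefore invisible to the non-pluripolar product of Theorem~A; it is a genuine Bedford--Taylor wedge (well-defined because $\tilde\psi$ is bounded), and bounding it by ambient Monge--Amp\`ere masses is exactly the content of~\cite{AGL23} that you defer to.
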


In dimension $n=3$, the bounded mass property is known to hold in the following cases (see~\cite{AGL23}):
\begin{itemize}
\item  there exists a Hermitian form $\om_X$ with $\ddc\om_X\ge 0$;
\item for `most' complex nilmanifolds; 
\item for the twistor space of a compact antiselfdual $4$-manifold.
\end{itemize}
At the time of this writing, we do not know any example where the bounded mass property fails, and it is an intriguing problem whether it always holds. For instance: 

\begin{exam} Does the Hopf manifold $X=(\C^n\setminus\{0\})/2^\Z$ of dimension $n\ge 3$ have the bounded mass property? See~\cite[\S 3.4.2]{AGL23} for a discussion. 
\end{exam}

%
\section{Non-pluripolar products} \label{sec:nonplpr}

As before, $X$ denotes a compact complex manifold, of complex dimension $n$, endowed with a reference Hermitian form $\om_X$. We show that the bounded mass property yields a notion of non-pluripolar products of quasi-closed, quasi-positive currents, and extend to this context some basic properties of non-pluripolar masses. 
%
\subsection{Products of currents with bounded potentials} \label{sect:bdd}
For any $p=1,\dots,n$, the classical work of Bedford--Taylor~\cite{BT87} extends the operator
$$
(\f_1,\dots,\f_p)\mapsto  \ddc\f_1\winter \ddc\f_p
$$
from tuples of (locally defined) smooth psh functions to bounded ones, with values in closed positive $(p,p)$-currents that are \emph{nonpluripolar}, in the sense that they put no mass on pluripolar sets. This operator is symmetric and multilinear, continuous along decreasing sequences, and further has the key property of being local in the plurifine topology (see~\S\ref{sec:quasipsh}). 

Since any quasi-psh function is locally the sum of a psh function and a smooth function, these results readily extend to tuples of bounded quasi-psh functions $\f_1,\dots,\f_p$, the closed $(p,p)$-current $\ddc\f_1\winter \ddc\f_p$ being now merely of order $0$ (\ie with signed measures as local coefficients), and still nonpluripolar. 

More generally, given arbitrary $(1,1)$-forms $\theta_1,\dots,\theta_p$, we define an operator
\begin{equation}\label{equ:MAbd}
(\f_1,\dots,\f_p)\mapsto\bigwedge_i(\theta_i+ \ddc\f_i)
\end{equation}
on tuples of bounded quasi-psh functions by forcing multilinearity, \ie 
\begin{equation}\label{equ:prodbd}
\bigwedge_{i=1}^p(\theta_i+ \ddc\f_i):=\sum_{I\subset\{1,\dots,p\}}\bigwedge_{i\in I}\theta_i\wedge\bigwedge_{i\notin I} \ddc\f_i. 
\end{equation}
This formula shows that the operator~\eqref{equ:MAbd} is still local in the plurifine topology. As the notation suggests, one easily checks that~\eqref{equ:prodbd} only depends on the currents $T_i=\theta_i+ \ddc\f_i$, and yields a symmetric, multilinear pairing 
\begin{equation}\label{equ:wedgebd}
(T_1,\dots,T_p)\mapsto T_1\winter T_p
\end{equation}
from tuples of quasi-closed, quasi-positive currents with bounded potentials to nonpluripolar $(p,p)$-currents. We next extend this as follows:

\begin{prop}\label{prop:prodbd} 
For any compact subset $K\subset X$, there exists a unique symmetric, multilinear pairing 
$$
(T_1,\dots,T_p)\mapsto\one_K T_1\winter T_p
$$
defined on tuples of quasi-closed, quasi-positive currents with bounded potentials on $K$ and with values in nonpluripolar $(p,p)$-currents, such that:
\begin{itemize}
\item[(i)] when the $T_i$'s have bounded potentials, the pairing is compatible with~\eqref{equ:wedgebd}; 
\item[(ii)] for any tuple of $(1,1)$-forms $\theta_1,\dots,\theta_p$, the operator 
$$
(\f_1,\dots,\f_p)\mapsto \one_K\bigwedge_i(\theta_i+ \ddc\f_i),
$$
acting on tuples of quasi-psh functions bounded on $K$ is local in the plurifine topology. 
\end{itemize}
Moreover, the construction is compatible with restriction, \ie 
\begin{equation}\label{equ:prodcomp}
\one_LT_1\winter T_p=\one_L\left(\one_KT_1\winter T_p\right)
\end{equation} 
for any compact subset $L\subset K$, and we further have 
\begin{equation}\label{equ:posprod}
T_1,\dots,T_p\ge 0\Longrightarrow\one_K T_1\winter T_p\ge 0. 
\end{equation}
\end{prop}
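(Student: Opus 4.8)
The plan is to \emph{define} the pairing by truncating potentials and invoking the bounded‑potential product~\eqref{equ:wedgebd} already constructed before the statement, then to read off the listed properties, the genuine work being the positivity~\eqref{equ:posprod}. Given $T_i=\theta_i+\ddc\f_i$ with $\f_i$ quasi‑psh and bounded on $K$, the plurifine open set $O_M:=\bigcap_i\{\f_i>-M\}$ contains $K$ for $M\gg0$, and on $O_M$ one has $\f_i=\f_i^M:=\max(\f_i,-M)$, a bounded quasi‑psh function, so the currents $T_i^M:=\theta_i+\ddc\f_i^M$ have bounded potentials. I would set
$$
\one_K T_1\winter T_p:=\one_K\bigwedge_i T_i^M.
$$
Independence of the choice of large $M$, and of the decomposition $T_i=\theta_i+\ddc\f_i$, follows from the two properties of~\eqref{equ:wedgebd} recalled in the excerpt — it depends only on the currents $T_i$ and is local in the plurifine topology: any two competing bounded representations agree, as currents, on $O_{M_0}\supset K$ for $M_0$ large, so their bounded products agree after multiplication by $\one_K$ (using also that $\one_K=\one_K\one_{O_{M_0}}$).

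Symmetry and multilinearity are inherited from~\eqref{equ:wedgebd}; property (i) is the case $K=X$. For (ii), if $\f_i=\p_i$ on a plurifine open $O$ then $\f_i^M=\p_i^M$ on $O$, so plurifine‑locality of~\eqref{equ:wedgebd} gives $\one_O\bigwedge_i T_i^M=\one_O\bigwedge_i(\theta_i+\ddc\p_i^M)$, and multiplying by $\one_K$ gives the claim. The restriction identity~\eqref{equ:prodcomp} is immediate, since the same $M$ works for $L\subset K$ and $\one_L(\one_K\,\cdot\,)=\one_L(\,\cdot\,)$. Uniqueness is then forced: for any pairing with (i)–(ii), plurifine‑locality applied on $O_M\supset K$, where $\f_i\equiv\f_i^M$, together with (i), shows that $\one_K$ times its value must equal $\one_K\bigwedge_i T_i^M$.

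The remaining and main point is positivity. Assuming $T_i\ge0$, since $K\subset O_M$ it suffices to prove $\one_{O_M}\bigwedge_iT_i^M\ge0$. As $\bigwedge_iT_i^M$ is an order‑$0$ nonpluripolar current, I would reduce the restriction to $O_M$ to the local situation on genuinely open subsets $\Omega\subset O_M$ on which all $\f_i$ are bounded (so that $\f_i^M=\f_i$ there). On a small coordinate ball $U\subset\Omega$, Lemma~\ref{lem:approxclosed} writes $\theta_i=\ddc g_i+\g_i$ with $g_i$ smooth and $|\g_i|\le\e\om_X$; choosing $\rho$ smooth strictly psh on a neighbourhood of $\bar U$ with $\ddc\rho\ge\om_X$, one gets on $U$
$$
\ddc(g_i+\f_i+\e\rho)=T_i-\g_i+\e\,\ddc\rho\ge -\e\om_X+\e\om_X=0,
$$
so $g_i+\f_i+\e\rho$ is a \emph{bounded psh} function and $\bigwedge_i\bigl((\theta_i+\e\,\ddc\rho-\g_i)+\ddc\f_i\bigr)=\bigwedge_i\ddc(g_i+\f_i+\e\rho)\ge0$. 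Letting $\e\to0$, the form parts $\theta_i+\e\,\ddc\rho-\g_i$ converge smoothly to $\theta_i$ with the bounded potentials $\f_i$ fixed, so by continuity of~\eqref{equ:prodbd} under smooth perturbation of the form factors the left‑hand side converges weakly to $\bigwedge_iT_i$ on $U$; a weak limit of positive currents being positive, $\bigwedge_iT_i^M=\bigwedge_iT_i\ge0$ on $U$. Covering $\Omega$ by such balls yields $\one_{O_M}\bigwedge_iT_i^M\ge0$, hence $\one_K\bigwedge_iT_i^M\ge0$.

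The main obstacle is exactly the subtlety flagged in the introduction: the measures $\bigwedge_iT_i^M$ are genuinely \emph{signed} away from $O_M$ — the auxiliary forms $\theta_i+\e\,\ddc\rho-\g_i$ are not positive, and this is unavoidable because $\rho$ must be large enough to dominate $\g_i$ — so positivity can only be extracted after restricting to $O_M$, where the truncations are inactive and $T_i\ge0$ in the honest sense. Making this rigorous requires carefully passing between the plurifine open set $O_M$ and honestly open sets carrying the same mass, using that Bedford–Taylor–type products charge no pluripolar set so that the $F_\sigma$‑ambiguities of $\{\f_i>-M\}$ do not contribute; that, and verifying the smooth‑perturbation continuity of~\eqref{equ:prodbd}, are the only nonroutine steps.
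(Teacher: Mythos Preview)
Your definition by truncation, the verification of well-definedness, symmetry, multilinearity, (i), (ii), restriction and uniqueness are all correct and essentially identical to the paper's argument.

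The gap is in the positivity step, and it is precisely the one you flag but do not close. You write that you ``reduce the restriction to $O_M$ to the local situation on genuinely open subsets $\Omega\subset O_M$ on which all $\f_i$ are bounded''. But $O_M=\bigcap_i\{\f_i>-M\}$ is only \emph{plurifine} open; since the $\f_i$ are merely usc, there is in general no standard open neighbourhood of a point of $K$ on which they stay above $-M$. So there may be no such $\Omega$ at all, and nonpluripolarity of the product does not help: the product $\bigwedge_i T_i^M$ is a signed measure, and you cannot conclude positivity on $K$ from positivity on a (possibly empty) collection of open subsets of $O_M$. A second, related problem: in your $\e\to 0$ step the ball $U$ and the error form $\g_i$ both depend on $\e$ (Lemma~\ref{lem:approxclosed} gives a \emph{smaller} $U$ for smaller $\e$), so ``$\theta_i+\e\,\ddc\rho-\g_i\to\theta_i$ smoothly on $U$'' is a limit on shrinking domains and does not yield positivity of $\bigwedge_i T_i$ on a fixed neighbourhood.

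The paper sidesteps both issues by never leaving the truncated world. It keeps $\f_i$ replaced by $\max\{\f_i,-t\}$ (bounded on all of $X$), so the local argument takes place on a genuine open $U$; there it observes that $\max\{\tau_i+\e\rho+\f_i,-t\}$ is bounded \emph{psh} on $U$ and coincides with $\tau_i+\e\rho+\max\{\f_i,-t\}$ on the plurifine open $O\cap U\supset K\cap U$. Plurifine locality of the bounded-potential product then identifies $\one_{K\cap U}\bigwedge_{i\in I}(\theta_i'+\e\om+\ddc\f_i)$ with a genuine Bedford--Taylor product of bounded psh functions, hence positive. The crucial extra trick is a multilinear expansion: writing $\theta_i+2\e\om=(\theta_i'+\e\om)+(\theta_i+\e\om-\theta_i')$ with $\theta_i+\e\om-\theta_i'\ge 0$, one obtains $\one_{K\cap U}\bigwedge_i(\theta_i+2\e\om+\ddc\max\{\f_i,-t\})\ge 0$. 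This is a \emph{globally defined} current (independent of $U$, $\theta_i'$, $\tau_i$), so covering gives positivity on $K\cap\Omega$ for each fixed $\e$, and only then does one let $\e\to 0$ using multilinearity. This order of quantifiers is exactly what your argument is missing.
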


We emphasize that the notation $\one_K T_1\winter T_p$ above should be taken as a whole, \ie we do not try to make sense of $T_1\winter T_p$ itself (see however Theorem~\ref{thm:npp} below). 

\begin{proof} Pick $(1,1)$-forms $\theta_1,\dots,\theta_p$ and quasi-psh functions $\f_1,\dots,\f_p$ that are bounded on $K$. The latter is then contained in the plurifine open subset 
$$
O:=\bigcap_i\{\f_i>-t_0\}
$$
for some $t_0$. For any $t\ge t_0$ and $i=1,\dots,p$, the bounded quasi-psh function $\max\{\f_i,-t\}$ coincides with $\f_i$ on $O$. By (i), (ii), we thus necessarily have 
\begin{equation}\label{equ:prodex}
\one_K \bigwedge_i(\theta_i+ \ddc\f_i)=\one_K\bigwedge_i(\theta_i+ \ddc\max\{\f_i,-t\}),
\end{equation}
for $t$ large enough, where the right-hand side was defined above (in the sense of Bedford--Taylor). This proves uniqueness. To get existence, we use~\eqref{equ:prodex} as a definition, and first check that it only depends on the quasi-closed, quasi-positive currents $T_i=\theta_i+\ddc\f_i$, $i=1,\dots,p$. Any other decomposition of $T_i$ is of the form 
$$
T_i=(\theta_i+\ddc\tau_i)+\ddc(\f_i-\tau_i)
$$ 
with $\tau_i\in C^\infty(X)$ (see Lemma~\ref{lem:quasi}). For $t$ large enough, the bounded quasi-psh functions 
$$
\max\{\f_i-\tau_i,-t\},\quad\max\{\f_i,-t\}-\tau_i
$$
are both equal to $\f_i-\tau_i$ on the plurifine open $O$. By plurifine locality, this yields 
$$
\one_K\bigwedge_i\left(\left(\theta_i+\ddc\tau_i\right)+ \ddc\max\{\f_i-\tau_i,-t\}\right)=\one_K\bigwedge_i \left(\left(\theta_i+ \ddc\tau_i\right)+ \ddc\left(\max\{\f_i,-t\}-\tau_i\right)\right), 
$$
which coincides in turn with $\one_K\bigwedge_i \left(\theta_i+ \ddc\max\{\f_i,-t\}\right)$. By~\eqref{equ:prodex} we infer 
$$
\one_K \bigwedge_i\left((\theta_i+\ddc\tau_i)+ \ddc(\f_i-\tau_i)\right)=\one_K \bigwedge_i(\theta_i+ \ddc\f_i),
$$
which proves, as desired, that this current only depends on the $T_i$'s. Similar considerations shows that 
$\one_K T_1\winter T_p$ is a multilinear function of the $T_i$. Symmetry is trivial, and~\eqref{equ:prodcomp} holds by uniqueness. 

The proof of~\eqref{equ:posprod} is slightly more involved. Assume $T_i\ge 0$ for all $i$. Since
$$
\one_K T_1\winter T_p\ge 0
$$ 
is a local property, it is enough to argue in a small open neighborhood $\Om$ of a given point of $X$, on which we can thus find a K\"ahler form $\om= \ddc\rho$ with $\rho$ smooth and bounded. Pick $\e>0$. By Lemma~\ref{lem:approxclosed}, each point of $\Om$ admits an open neighborhood $U\subset\Om$ (depending on $\e$) and closed $(1,1)$-forms $\theta_i'$ on $U$ such that $-\e\om\le\theta_i-\theta'_i\le\e\om$ on $U$. After shrinking $U$, we further assume $\theta'_i= \ddc\tau_i$ with $\tau_i$ smooth and bounded on $U$. Then 
$$
 \ddc(\tau_i+\e\rho+\f_i)=\theta_i'+\e\om+ \ddc\f_i\ge\theta_i+ \ddc\f_i\ge 0,
$$
and hence $\tau_i+\e\rho+\f_i$ is psh on $U$. For each $t>0$, $\max\{\tau_i+\e\rho+\f_i,-t\}$ is thus psh and bounded on $U$, and it further coincides with the bounded quasi-psh function $\tau_i+\e\rho+\max\{\f_i,-t\}$ on the plurifine open $O\cap U$ for $t$ large enough. By plurifine locality of Bedford--Taylor products,  we get for each $I\subset\{1,\dots,p\}$
$$
\one_{K\cap U}\bigwedge_{i\in I}\left(\theta'_i+\e\om+ \ddc\f_i\right)=\one_{K\cap U}\bigwedge_{i\in I} \ddc\left(\tau_i+\e\rho+\max\{\f_i,-t\}\right)
$$
$$
=\one_{K\cap U}\bigwedge_{i\in I} \ddc\max\{\tau_i+\e\rho+\f_i,-t\}\ge 0.
$$
Since $\theta_i+\e\om\ge\theta'_i$ on $U$, we infer 
$$
\one_{K\cap U}\bigwedge_{i=1}^p(\theta_i+2\e\om+ \ddc\f_i)=\sum_{I\subset\{1,\dots,p\}}\one_{K\cap U}\bigwedge_{i\in I}(\theta'_i+\e\om+ \ddc\f_i)\wedge\bigwedge_{i\notin I}(\theta_i+\e\om-\theta'_i)\ge 0.
$$
As this holds for $U$ ranging over an open cover of $\Om$,  we infer 
$$
\one_{K\cap\Om}\bigwedge_{i=1}^p(\theta_i+2\e\om+ \ddc\f_i)\ge 0.
$$
This holds for any $\e>0$, and this current converges weakly to $\one_{K\cap\Om} \bigwedge_i(\theta_i+ \ddc\f_i)$ as $\e\to 0$, as one easily sees using multilinearity. We conclude $\one_K \bigwedge_i(\theta_i+ \ddc\f_i)\ge 0$ on $\Om$, which finishes the proof of~\eqref{equ:posprod}. 
\end{proof}

%
\subsection{Non-pluripolar products}
In this Section and \ref{sect:monomass}
 \emph{we assume that the manifold $X$ has the
bounded mass property}  (see~\S\ref{sec:bdmass}).  
Generalizing~\cite[\S 1]{BEGZ10} (which dealt with closed currents on a K\"ahler manifold), we then show: 

\begin{thm}\label{thm:npp} For any $p=1,\dots,n$, there exists a unique symmetric, multilinear pairing 
$$
(T_1,\dots,T_p)\mapsto  T_1\winter T_p 
$$
defined on $p$-tuples of quasi-closed, quasi-positive currents and with values in nonpluripolar $(p,p)$-currents, such that:
\begin{itemize}
\item[(i)] if the $T_i$'s have bounded potentials, then the pairing is compatible with~\eqref{equ:wedgebd}; 
\item[(ii)] for any tuple of $(1,1)$-forms $\theta_1,\dots,\theta_p$, the operator 
$$
(\f_1,\dots,\f_p)\mapsto \bigwedge_i(\theta_i+ \ddc\f_i), 
$$
defined on all tuples of quasi-psh functions, is local in the plurifine topology. 
\end{itemize}
Moreover, 
$$
T_1,\dots,T_p\ge 0\Longrightarrow  T_1\winter T_p \geq 0. 
$$
\end{thm}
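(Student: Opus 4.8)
The plan is to define the global non-pluripolar product by a standard exhaustion/gluing argument on top of the local construction of Proposition~\ref{prop:prodbd}, and then to read off all the listed properties from those of the local pairings $\one_K T_1\winter T_p$. Concretely, given quasi-closed, quasi-positive currents $T_i=\theta_i+\ddc\f_i$, set for each $t>0$
$$
O_t:=\bigcap_{i=1}^p\{\f_i>-t\},
$$
a plurifine open subset of $X$, and define $T_1\winter T_p$ on $O_t$ to be $\one_{\overline{O_t}}\bigwedge_i(\theta_i+\ddc\max\{\f_i,-t\})$ restricted to $O_t$ (the potentials being bounded on $\overline{O_t}$, this is legitimate; one can also work directly with the compatible family $\one_K$ for $K\subset O_t$ compact and pass to the limit). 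The restriction compatibility~\eqref{equ:prodcomp} together with plurifine locality~(ii) of Proposition~\ref{prop:prodbd} shows that for $s<t$ the current defined using level $t$ agrees on $O_s$ with the one defined using level $s$; hence the $T_1\winter T_p|_{O_t}$ glue to a well-defined $(p,p)$-current on $\bigcup_t O_t=X\setminus\bigcap_i\{\f_i=-\infty\}$. Since $\bigcap_i\{\f_i=-\infty\}$ is pluripolar and the local masses are locally finite (this is precisely where the bounded mass property enters, via Lemma~\ref{lem:bdmass2}), the trivial extension across this pluripolar set is a well-defined nonpluripolar current, which we declare to be $T_1\winter T_p$.

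Next I would verify the properties. Independence of the decomposition $T_i=\theta_i+\ddc\f_i$, symmetry, and multilinearity all follow immediately from the corresponding statements in Proposition~\ref{prop:prodbd} applied on each $O_t$, because changing $\theta_i$ by $\ddc\tau_i$ with $\tau_i\in C^\infty(X)$ changes $\f_i$ by $-\tau_i$ and leaves the plurifine-local data unchanged. Property~(i): if all $T_i$ have bounded potentials, then $O_t=X$ for $t$ large, so $T_1\winter T_p$ coincides with $\bigwedge_i(\theta_i+\ddc\f_i)$ in the Bedford--Taylor sense, \ie~\eqref{equ:wedgebd}. Property~(ii): if $\f_i=\p_i$ on a plurifine open $O$, then $\max\{\f_i,-t\}=\max\{\p_i,-t\}$ on $O\cap(O_t\cap O_t')$, and plurifine locality of the truncated products together with the fact that the pluripolar polar sets are negligible gives $\one_O\bigwedge_i(\theta_i+\ddc\f_i)=\one_O\bigwedge_i(\theta_i+\ddc\p_i)$. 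Uniqueness is forced by the same reasoning that proved uniqueness in Proposition~\ref{prop:prodbd}: any pairing satisfying (i),(ii) must agree with $\one_{\overline{O_t}}\bigwedge_i(\theta_i+\ddc\max\{\f_i,-t\})$ on $O_t$ for every $t$, hence with our construction off a pluripolar set, hence everywhere by nonpluripolarity. Finally, positivity $T_i\ge0\Rightarrow T_1\winter T_p\ge0$ follows from~\eqref{equ:posprod} applied on each $O_t$, the trivial extension of a positive current across a pluripolar set being positive.

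The main obstacle is not any single step but the bookkeeping that makes the gluing legitimate: one must check that the locally defined currents $\one_{\overline{O_t}}\bigwedge_i(\theta_i+\ddc\max\{\f_i,-t\})$ really do restrict consistently to $O_s$ for $s<t$ — this uses~\eqref{equ:prodcomp} and~(ii) of Proposition~\ref{prop:prodbd}, but one also has to be careful that $\overline{O_s}\subset O_t$ need not hold, so the cleanest route is to phrase everything in terms of the compact-set pairings $\one_K$ for $K$ ranging over compacts contained in $O_t$, prove consistency there, and only then glue. A secondary point requiring care is finiteness of mass near the pluripolar set $\{\f_i=-\infty\}$: without the bounded mass property the trivial extension could fail to have locally finite mass, so one invokes Lemma~\ref{lem:bdmass2} to guarantee that the locally finite masses of the truncations stay bounded as $t\to\infty$, ensuring the extension is a genuine current rather than merely a current on the complement of a pluripolar set.
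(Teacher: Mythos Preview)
Your approach is essentially the paper's: truncate, invoke Proposition~\ref{prop:prodbd} on the sublevel sets, glue via~\eqref{equ:prodcomp}, and extend trivially across the pluripolar polar set using the bounded mass property. Two points deserve cleaning up.

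First, the paper sidesteps your bookkeeping worry about $\overline{O_s}\not\subset O_t$ by working directly with the \emph{compact} sets $K_m:=\bigcap_i\{\f_i\ge -m\}$ (closed since the $\f_i$ are usc, hence compact in $X$). These form an increasing sequence with $X\setminus\bigcup_m K_m$ pluripolar, and the currents $\Theta_m:=\one_{K_m}T_1\winter T_p$ from Proposition~\ref{prop:prodbd} satisfy $\one_{K_m}\Theta_{m+1}=\Theta_m$ by~\eqref{equ:prodcomp}; so the gluing is a one-liner and there is no need to pass through plurifine open sets and then retreat to compacts inside them.

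Second, and more substantively, you cite Lemma~\ref{lem:bdmass2} for the mass bound, but that lemma is stated only for \emph{positive} currents and for $p=n$. The theorem, however, concerns arbitrary quasi-closed quasi-positive currents, for which the truncated products $\one_{K_m}T_1\winter T_p$ are signed, so what you need is a uniform bound on their \emph{total variation}. This is precisely Lemma~\ref{lem:bdmass} (the ``key input'' in the paper), and Lemma~\ref{lem:bdmass2} is in fact recorded only \emph{after} Theorem~\ref{thm:npp} as a consequence. With that correction your argument goes through.
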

The current $T_1\winter T_p$ is called the \emph{non-pluripolar product} of the $T_i$'s. When $\theta_i=\theta$ and $\varphi_i=\varphi$, we write
$$
(\theta+dd^c \varphi)^p := \bigwedge_i(\theta_i+ \ddc\f_i). 
$$
The key input in the proof is the following: 

\begin{lem}\label{lem:bdmass} 
Let $T_1,\dots,T_p$ be quasi-closed, quasi-positive currents, and pick a compact subset $K\subset X$ on which the $T_i$'s have bounded potentials. Then the total variation of the current $\one_KT_1\winter T_p$ defined in Proposition~\ref{prop:prodbd} is bounded by a constant independent of $K$. 

More precisely, write $T_i=\theta_i+ \ddc\f_i$ with $\theta_i$ smooth and $\f_i$ quasi-psh, and assume 
$$
\pm\theta_i\le C\om_X\quad\text{and}\quad \ddc\f_i\ge-C\om_X
$$
for some $C>0$ and
for all $i$. Then the total variation of $\one_KT_1\winter T_p$ is bounded by a constant only depending on $\om_X$ and $C$. 
\end{lem}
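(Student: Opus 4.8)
The plan is to reduce the estimate to the bounded mass property, by writing the order-$0$ current $\one_K T_1\winter T_p$ as a signed combination of \emph{positive} products, and then controlling the mass of each positive piece by a mixed Monge--Amp\`ere integral of bounded $\om_X$-psh functions.

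First, since the first assertion follows from the second (write $T_i=\theta_i+\ddc\f_i$ with $\theta_i$ smooth and $\f_i$ quasi-psh as in Lemma~\ref{lem:qquasi}, and pick $C$ so large that $\pm\theta_i\le C\om_X$ and $\ddc\f_i\ge-C\om_X$), I assume these bounds. Set $\om_i:=\theta_i+2C\om_X$, a Hermitian form with $C\om_X\le\om_i\le3C\om_X$, and $\tilde T_i:=T_i+2C\om_X=\om_i+\ddc\f_i\ge0$, with $\f_i\in\PSH(\om_i)$ bounded on $K$. Writing $T_i=\tilde T_i-2C\om_X$ and using multilinearity of the pairing of Proposition~\ref{prop:prodbd} (the smooth form $-2C\om_X$ being quasi-closed, quasi-positive with bounded potential) gives
$$
\one_K T_1\winter T_p=\sum_{I\subseteq\{1,\dots,p\}}(-2C)^{p-|I|}\Bigl(\one_K\bigwedge_{i\in I}\tilde T_i\Bigr)\wedge\om_X^{p-|I|}
$$
(with $\bigwedge_{i\in\emptyset}\tilde T_i:=1$). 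By the positivity statement~\eqref{equ:posprod}, each $\one_K\bigwedge_{i\in I}\tilde T_i$ is a positive order-$0$ current, hence so is its wedge with the smooth positive form $\om_X^{p-|I|}$; since the mass of a positive order-$0$ $(p,p)$-current $R$ is at most $c_n\int_X R\wedge\om_X^{n-p}$ for a dimensional constant $c_n$, it suffices to bound each $\int_X\bigl(\one_K\bigwedge_{i\in I}\tilde T_i\bigr)\wedge\om_X^{n-|I|}$ by a constant depending only on $C$ and $\om_X$.

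Fix $I$ with $|I|=q$. For $t\gg1$, the \emph{globally} bounded $\om_i$-psh function $\psi_i:=\max\{\f_i,-t\}$ agrees with $\f_i$ on the plurifine open set $\bigcap_i\{\f_i>-t\}\supseteq K$, so by plurifine locality (Proposition~\ref{prop:prodbd}(ii)) $\one_K\bigwedge_{i\in I}\tilde T_i=\one_K\bigwedge_{i\in I}(\om_i+\ddc\psi_i)$, and the latter equals $\one_K$ times the globally defined positive order-$0$ current $\bigwedge_{i\in I}(\om_i+\ddc\psi_i)$. Dropping the factor $\one_K\le1$, then replacing each $\om_i$ by $3C\om_X$ one factor at a time — each replacement changes the integral by $\int_X(3C\om_X-\om_i)\wedge(\text{positive order-}0\text{ current})\wedge\om_X^{n-q}\ge0$ by linearity in that slot — yields
$$
\int_X\Bigl(\one_K\bigwedge_{i\in I}\tilde T_i\Bigr)\wedge\om_X^{n-q}\le(3C)^q\int_X\bigwedge_{i\in I}(\om_X+\ddc v_i)\wedge\om_X^{n-q},\qquad v_i:=\tfrac1{3C}\psi_i\in\PSH(\om_X)\cap L^\infty.
$$
Finally, list the $v_i$ ($i\in I$) together with $n-q$ copies of $0$ as $w_1,\dots,w_n$, so the right-hand integral is $\int_X\prod_{j=1}^n(\om_X+\ddc w_j)$; with $u:=\tfrac1n\sum_j w_j\in\PSH(\om_X)\cap L^\infty$, expanding $(\om_X+\ddc u)^n=n^{-n}\bigl(\sum_j(\om_X+\ddc w_j)\bigr)^n$ into Bedford--Taylor products — all positive measures — and keeping the $n!$ terms equal to $\bigwedge_j(\om_X+\ddc w_j)$ gives $(\om_X+\ddc u)^n\ge\tfrac{n!}{n^n}\prod_j(\om_X+\ddc w_j)$ as measures. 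Integrating and invoking the bounded mass property, $\int_X\prod_j(\om_X+\ddc w_j)\le\tfrac{n^n}{n!}\int_X(\om_X+\ddc u)^n\le\tfrac{n^n}{n!}\uvol(\om_X)$. Assembling the three steps bounds the mass by $c_n\tfrac{n^n}{n!}(5C)^p\,\uvol(\om_X)$, a constant depending only on $n$, $C$ and $\om_X$, and in particular independent of $K$.

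The only genuinely delicate point is the positivity bookkeeping: one must make sure that at each stage the currents involved really are positive order-$0$ currents, so that ``dropping $\one_K$'', the factor-by-factor monotonicity, and the inequality $(\om_X+\ddc u)^n\ge\tfrac{n!}{n^n}\prod_j(\om_X+\ddc w_j)$ are all legitimate inequalities of positive measures. This is ensured by combining the positivity and plurifine-locality assertions of Proposition~\ref{prop:prodbd} with the classical Bedford--Taylor theory for bounded quasi-psh functions; no analytic input beyond the bounded mass property is needed.
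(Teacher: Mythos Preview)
Your proof is correct and takes essentially the same approach as the paper: reduce by multilinearity to products of \emph{positive} currents, bound the resulting mixed mass by $\int_X(\om_X+\ddc u)^n$ with $u$ the average of the (truncated) potentials via the arithmetic-mean inequality, and invoke the bounded mass property. The paper is terser about the multilinearity step (it simply asserts ``by multilinearity, we may assume $\theta_i=\om_X$ and $T_i\ge 0$'') and truncates the single averaged potential $\p=\tfrac1n\sum\f_i$ \emph{after} the averaging rather than truncating each $\f_i$ before, but these are cosmetic differences rather than substantive ones.
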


\begin{proof} 
By multilinearity, we may assume without loss of generality that
$\theta_i=\om_X$ and $T_i\ge 0$, \ie $\f_i\in\PSH(\om_X)$. By~\eqref{equ:posprod}, we then have $\one_KT_1\winter T_p\ge 0$, and
$$
\int \one_KT_1\winter T_p\wedge\om_X^{n-p}\le n^n\int \one_K(\om_X+ \ddc\p)^n
$$
with $\p:=\tfrac 1n\sum_{i=1}^p\f_i$, by multilinearity. By construction, the right-hand side is unchanged when we replace $\p$ with the bounded $\om_X$-function $\max\{\p,-t\}$ for $t\gg 1$ (see Proposition~\ref{prop:prodbd}), and 
$$
\int \one_K(\om_X+ \ddc\p)^n=\int \one_K(\om_X+ \ddc\max\{\p,-t\})^n
$$
$$
\le\int_X(\om_X+ \ddc\max\{\p,-t\})^n\le\uvol(\om_X)<\infty, 
$$ 
see~\S\ref{sec:bdmass}. 
\end{proof}

\begin{proof}[Proof of Theorem~\ref{thm:npp}] Write $T_i=\theta_i+ \ddc\f_i$ with $\theta_i$ smooth and $\f_i$ quasi-psh. For each $m\in\N$, the $T_i$'s have bounded potentials on the compact subset $K_m:=\bigcap_i\{\f_i\ge-m\}$, and $(K_m)_m$ is an increasing sequence such that 
$$
X\setminus\bigcup_m K_m=\bigcup_i\{\f_i=-\infty\}
$$
is pluripolar. For each $m$, Proposition~\ref{prop:prodbd} yields a nonpluripolar $(p,p)$-current
$$
\Theta_m:=\one_{K_m} T_1\winter T_p,
$$
which further satisfies $\Theta_m=\one_{K_m} \Theta_{m+1}$. By Lemma~\ref{lem:bdmass}, the total variation of $\Theta_m$ is further bounded independently of $m$, and it follows that there exists a unique nonpluripolar $(p,p)$-current $\Theta$ on $X$ such that $\one_{K_m}\Theta=\Theta_m$ for all $m$. By (i), (ii), this current $\Theta$ necessarily coincides with $T_1\winter T_p$, and the rest now follows from Proposition~\ref{prop:prodbd}.
\end{proof}

We further record the following consequence of Lemma~\ref{lem:bdmass}: 

\begin{lem}\label{lem:bdmass2} For all quasi-closed positive currents $T_1,\dots,T_n\ge 0$, we have 
$$
\int_X T_1\winter T_n\le M
$$
for a constant $M>0$ only depending on the psef classes $\{T_i\}\in\BC^{1,1}(X,\R)$. 
\end{lem}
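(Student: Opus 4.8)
The plan is to deduce the bound directly from Lemma~\ref{lem:bdmass}, combined with the inner-regularity built into the construction of the non-pluripolar product (Theorem~\ref{thm:npp}). The crux is that the constant ``$C$'' appearing in Lemma~\ref{lem:bdmass} can be arranged to depend only on the psef classes $\alpha_i:=\{T_i\}\in\BC^{1,1}(X,\R)$. To this end I would first fix once and for all smooth $(1,1)$-forms $\theta_i$ with $\{\theta_i\}=\alpha_i$; since there are finitely many of them on the compact manifold $X$, there is a constant $C>0$, depending only on $\om_X$ and the chosen $\theta_i$ (hence only on the $\alpha_i$), such that $\pm\theta_i\le C\om_X$ for all $i$.

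Next I would observe that, given any positive currents $T_i$ in the classes $\alpha_i$ as in the statement, Lemma~\ref{lem:qquasi} lets me write $T_i=\theta_i+\ddc\f_i$ with $\f_i\in\PSH(\theta_i)$. Crucially, $\ddc\f_i=T_i-\theta_i\ge-\theta_i\ge-C\om_X$ is then automatic from $T_i\ge 0$, so the pairs $(\theta_i,\f_i)$ satisfy the quantitative hypotheses of Lemma~\ref{lem:bdmass} with this \emph{uniform} $C$. Taking the compact exhaustion $K_m:=\bigcap_i\{\f_i\ge -m\}$ on which the $T_i$ have bounded potentials, that lemma bounds the total variation of $\Theta_m:=\one_{K_m}\,T_1\winter T_n$ --- equivalently, by~\eqref{equ:posprod}, its total mass --- by a constant $M=M(\om_X,C)$, i.e. by a constant depending only on $\alpha_1,\dots,\alpha_n$.

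To conclude, I would recall from the proof of Theorem~\ref{thm:npp} that $T_1\winter T_n$ is the unique non-pluripolar current $\Theta$ with $\one_{K_m}\Theta=\Theta_m$ for all $m$, and that $\Theta\ge 0$ here since the $T_i\ge 0$. As $(K_m)$ increases with $X\setminus\bigcup_m K_m$ pluripolar, while the positive measure $\Theta$ charges no pluripolar set, continuity from below yields
\[
\int_X T_1\winter T_n=\Theta(X)=\lim_{m\to\infty}\Theta(K_m)=\lim_{m\to\infty}\Theta_m(X)\le M,
\]
which is the asserted estimate.

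The only delicate point --- and the one I would be most careful about --- is the uniformity claimed in the second paragraph: that the constant entering Lemma~\ref{lem:bdmass} really depends on the psef classes alone. This rests entirely on the observation that, once a smooth representative $\theta_i$ is pinned down, positivity of $T_i$ \emph{forces} $\ddc\f_i\ge-C\om_X$ with $C$ independent of $T_i$. Everything else is a routine assembly of Lemma~\ref{lem:bdmass}, \eqref{equ:posprod}, and the monotone-limit definition of the non-pluripolar product.
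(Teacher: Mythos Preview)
Your proof is correct and matches the paper's approach: the paper simply records Lemma~\ref{lem:bdmass2} as a consequence of Lemma~\ref{lem:bdmass} without further detail, and your argument spells out exactly that deduction---fixing smooth representatives $\theta_i$ of the classes to pin down a uniform $C$, invoking Lemma~\ref{lem:bdmass} on the sets $K_m$, and passing to the limit using positivity and non-pluripolarity.
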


\subsection{Monotonicity of non-pluripolar masses}\label{sect:monomass}
Arguing as in~\cite[Theorem~2.3]{DDL}, we establish the following semicontinuity property of non-pluripolar Monge--Amp\`ere masses. 
 
\begin{prop}\label{prop:lscmass} 
For any $(1,1)$-forms $\theta_1,\dots,\theta_n$, 
$$
\int_X (\theta_1+dd^c\f_1)\winter(\theta_n+dd^c\f_n)
$$
is a lower semicontinuous function of the tuple $\f_i\in\PSH(\theta_i)$, $i=1,\dots,n$, with respect to convergence in capacity. 
\end{prop}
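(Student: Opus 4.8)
The plan is to adapt the Kähler argument of~\cite[Theorem~2.3]{DDL}; the construction of non-pluripolar products in Theorem~\ref{thm:npp} was set up precisely so that its ingredients survive in the Hermitian setting. Fix $\f_i^{(k)}\in\PSH(\theta_i)$ converging in capacity to $\f_i\in\PSH(\theta_i)$, and set $\Theta^{(k)}:=(\theta_1+\ddc\f_1^{(k)})\winter(\theta_n+\ddc\f_n^{(k)})$ and $\Theta:=(\theta_1+\ddc\f_1)\winter(\theta_n+\ddc\f_n)$; by Theorem~\ref{thm:npp} both are positive $(n,n)$-currents, \ie positive measures, and we must show $\liminf_k\Theta^{(k)}(X)\ge\Theta(X)$. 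First I would reduce to a plurifine-open exhaustion: for $s>0$ put $O_s:=\bigcap_i\{\f_i>-s\}$, a plurifine-open set whose union over $s$ covers $X$ outside the pluripolar set $\bigcup_i\{\f_i=-\infty\}$; since $\Theta$ is non-pluripolar, $\Theta(O_s)\uparrow\Theta(X)$, so it suffices to prove $\liminf_k\Theta^{(k)}(X)\ge\Theta(O_s)$ for each fixed $s$.

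Fix $s$, set $t:=s+1$, and truncate: $v_i^{(k)}:=\max\{\f_i^{(k)},-t\}$, $v_i:=\max\{\f_i,-t\}$, which are uniformly bounded (as follows from the standard properties of the capacity) and still converge in capacity, $v_i^{(k)}\to v_i$. Let $\nu^{(k)}$ and $\nu$ denote the corresponding Bedford--Taylor products (order-$0$ currents, cf.~\S\ref{sect:bdd}). On $O_t^{(k)}:=\bigcap_i\{\f_i^{(k)}>-t\}$ one has $v_i^{(k)}=\f_i^{(k)}$, so plurifine locality (Theorem~\ref{thm:npp}) gives $\one_{O_t^{(k)}}\Theta^{(k)}=\one_{O_t^{(k)}}\nu^{(k)}$; using $\Theta^{(k)}\ge0$, and that $\nu^{(k)}=\Theta^{(k)}\ge0$ on $O_t^{(k)}$,
\[
\Theta^{(k)}(X)\ \ge\ \int_{O_t^{(k)}}\nu^{(k)}\ \ge\ \int_{O_t^{(k)}\cap O_s}\nu^{(k)}\ =\ \int_{O_s}\nu^{(k)}\ -\ \int_{O_s\setminus O_t^{(k)}}\nu^{(k)}.
\]
On $O_s\setminus O_t^{(k)}$ one has $\f_i>-s$ and $\f_i^{(k)}\le-s-1$ for some $i$, hence $O_s\setminus O_t^{(k)}\subset\bigcup_i\{|\f_i-\f_i^{(k)}|>1\}$, whose capacity tends to $0$. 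Since the $v_i^{(k)}$ are uniformly bounded, the standard comparison $|\nu^{(k)}|(E)\le C\,\Capa(E)$ — proved by reducing to the Bedford--Taylor estimate via local potentials of closed approximations of the $\theta_i$, exactly as in the proof of Proposition~\ref{prop:prodbd} — shows that the last integral tends to $0$.

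Next I would handle $\int_{O_s}\nu^{(k)}$. Uniformly bounded quasi-psh functions converging in capacity have weakly convergent Bedford--Taylor products (the classical fact, extended to order-$0$ currents via local potentials and plurifine locality), so $\nu^{(k)}\to\nu$ weakly; combined with the uniform bound $|\nu^{(k)}|(E)\le C\,\Capa(E)$, a routine quasi-continuity argument gives $\int_E\nu^{(k)}\to\int_E\nu$ for every bounded quasi-continuous function, in particular for $\one_{O_s}$ (quasi-continuous, being a finite intersection of superlevel sets of quasi-psh functions). On $O_s$ we have $v_i=\f_i$, so $\one_{O_s}\nu=\one_{O_s}\Theta$ by plurifine locality, \ie $\int_{O_s}\nu=\Theta(O_s)$. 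Passing to the $\liminf$ in the displayed inequality and combining with the previous estimates yields $\liminf_k\Theta^{(k)}(X)\ge\Theta(O_s)$; letting $s\to\infty$ finishes the proof.

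I expect the main obstacle to be this last step: pushing weak Bedford--Taylor convergence through the merely plurifine-open, non-closed set $O_s$, which forces one to combine it with the uniform absolute continuity of the masses with respect to capacity — and checking that the estimate $|\nu^{(k)}|(E)\le C\,\Capa(E)$ genuinely persists for order-$0$ products built from non-positive forms $\theta_i$ (the Hermitian feature here) is the technical heart of the argument.
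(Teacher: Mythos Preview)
Your overall strategy is the right one and closely parallels the paper's proof, but there is a genuine gap at the step where you assert that $\one_{O_s}$ is quasi-continuous ``being a finite intersection of superlevel sets of quasi-psh functions''. This is false in general: quasi-psh functions are quasi-continuous, but the characteristic function of a superlevel set $\{\f>-s\}$ is \emph{not}, because it jumps across the level set $\{\f=-s\}$, which typically has positive capacity. Plurifine openness of $O_s$ is irrelevant here; quasi-continuity is a metric notion. Consequently you cannot pass the weak convergence $\nu^{(k)}\to\nu$ through $\one_{O_s}$, and since $\nu^{(k)}$ is a genuinely signed measure on $O_s\setminus O_t^{(k)}$, an approximation $\chi\le\one_{O_s}$ from below does not immediately give a lower bound either.

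The paper repairs exactly this point by replacing $\one_{O_s}$ with the explicit quasi-continuous approximations
\[
\chi_{\e,t}=\frac{\max\{\min_i\f_i+t,0\}}{\max\{\min_i\f_i+t,0\}+\e},\qquad
\chi_{k,\e,t}=\frac{\max\{\min_i\f_i^{(k)}+t,0\}}{\max\{\min_i\f_i^{(k)}+t,0\}+\e},
\]
and, crucially, works with the $k$-dependent function $\chi_{k,\e,t}$ rather than a fixed cutoff. Since $\chi_{k,\e,t}$ vanishes outside $O_t^{(k)}$, where $\nu^{(k)}=\Theta^{(k)}\ge 0$, one gets $\Theta^{(k)}(X)\ge\int_X\chi_{k,\e,t}\,\nu^{(k)}$ without any sign issue. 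As $\chi_{k,\e,t}\to\chi_{\e,t}$ in capacity along with $v_i^{(k)}\to v_i$, Lemma~\ref{lem:capcont} applies directly and yields $\int_X\chi_{k,\e,t}\,\nu^{(k)}\to\int_X\chi_{\e,t}\,\nu=\int_X\chi_{\e,t}\,\Theta$; letting $\e\to 0$ and then $t\to\infty$ finishes. Once you make this substitution your capacity estimate $|\nu^{(k)}|(E)\le C\Capa(E)$ and the separate treatment of $O_s\setminus O_t^{(k)}$ become unnecessary, and the argument collapses to the paper's.
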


We refer to~\cite[Chapter~4]{GZbook} for the notion of convergence in capacity. Suffice it to say here that any monotonically convergent sequence of $C\om_X$-psh functions converges in capacity. The proof of Proposition~\ref{prop:lscmass} relies on the 
following continuity property of Monge--Amp\`ere integrals, which is a direct consequence of the corresponding local statement,
see \cite[Theorem~4.25]{GZbook}.   

\begin{lem}\label{lem:capcont} 
Pick $C>0$, and assume $\f_i^k\in\PSH(C\om_X)$ is uniformly bounded and converges in capacity to $\f_i\in\PSH(C\om_X)$ as $k\to\infty$, for $i=1,\dots,n$. Let also $(f_k)$ be a bounded sequence of quasi-continuous functions, converging in capacity to a quasi-continuous function $f$. Then 
$$
\lim_k \int_X f_k\bigwedge_i(\theta_i+ \ddc\f_i^k)=\int_X f\bigwedge_i(\theta_i+ \ddc\f_i). 
$$
\end{lem}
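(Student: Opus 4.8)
The plan is to reduce this assertion, which is global on $X$, to the purely local Bedford--Taylor continuity result \cite[Theorem~4.25]{GZbook}, using the multilinear expansion~\eqref{equ:prodbd} together with a partition of unity. Since for bounded quasi-psh functions the product $\bigwedge_i(\theta_i+\ddc\f_i)$ is \emph{defined} termwise by~\eqref{equ:prodbd}, everything will come down to the behaviour of Bedford--Taylor products of bounded psh functions in a coordinate chart.

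\emph{Step 1: reduction to one summand on a chart.} By~\eqref{equ:prodbd} and linearity of $T\mapsto\int_X f_k\,T$ in the test current, I would fix a subset $I\subseteq\{1,\dots,n\}$ with complement $S$, set $\g:=\bigwedge_{i\in I}\theta_i$ (a smooth $(n-|S|,n-|S|)$-form), and reduce the statement to
$$
\int_X f_k\,\g\wedge\bigwedge_{i\in S}\ddc\f_i^k\;\longrightarrow\;\int_X f\,\g\wedge\bigwedge_{i\in S}\ddc\f_i .
$$
Then I would cover $X$ by finitely many coordinate balls $U_\a\Subset V_\a$, take a subordinate partition of unity $(\chi_\a)$, and on each $V_\a$ choose a smooth psh function $\rho_\a$ with $\ddc\rho_\a\ge C\om_X$ (\eg $\rho_\a=A|z|^2$ in suitable coordinates, $A\gg1$). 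Since $\f_i^k\in\PSH(C\om_X)$, the function $u_i^k:=\rho_\a+\f_i^k$ is psh on $V_\a$; it is uniformly bounded on $U_\a$ (as $\f_i^k$ is), and $u_i^k\to u_i:=\rho_\a+\f_i$ in capacity on $U_\a$. Likewise $f_k\to f$ in capacity on $U_\a$, with $(f_k)$ uniformly bounded. Here I use that convergence in capacity is a local notion: the global capacity on $X$ and the Bedford--Taylor capacity of $U_\a$ are mutually comparable on compact subsets (see~\cite[Chapter~4]{GZbook}).

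\emph{Step 2: local expansion and passage to the limit.} On $U_\a$, writing $\ddc\f_i^k=\ddc u_i^k-\ddc\rho_\a$ and expanding $\bigwedge_{i\in S}\ddc\f_i^k$ by multilinearity of Bedford--Taylor products exhibits $\chi_\a\,\g\wedge\bigwedge_{i\in S}\ddc\f_i^k$ as a finite signed sum of terms $\b_{\a,J}\wedge\bigwedge_{i\in J}\ddc u_i^k$, $J\subseteq S$, where $\b_{\a,J}:=(-1)^{|S\setminus J|}\chi_\a\,\g\wedge\bigwedge_{i\in S\setminus J}\ddc\rho_\a$ is a smooth $(n-|J|,n-|J|)$-form with compact support in $U_\a$. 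Applying~\cite[Theorem~4.25]{GZbook} to the uniformly bounded quasi-continuous functions $f_k\to f$ (tested against $\b_{\a,J}$) gives
$$
\int_{U_\a} f_k\,\b_{\a,J}\wedge\bigwedge_{i\in J}\ddc u_i^k\;\longrightarrow\;\int_{U_\a} f\,\b_{\a,J}\wedge\bigwedge_{i\in J}\ddc u_i .
$$
Summing over the finitely many $\a$ and $J\subseteq S$ then yields the claim, since $(\chi_\a)$ is a partition of unity and the products $\bigwedge_i(\theta_i+\ddc\f_i^k)$, $\bigwedge_i(\theta_i+\ddc\f_i)$ restrict on each $U_\a$ to the corresponding local Bedford--Taylor expansions.

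I do not expect a genuine obstacle here; the computation is essentially bookkeeping with the multilinear expansion and bidegrees. The only slightly delicate point will be the transfer of ``convergence in capacity on $X$'' to ``convergence in capacity on each chart $U_\a$'', \ie the comparison between the global capacity and the local Bedford--Taylor capacities, which is standard but should be invoked carefully.
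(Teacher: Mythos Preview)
Your proposal is correct and follows exactly the route the paper indicates: the paper does not give a proof but simply states that the lemma ``is a direct consequence of the corresponding local statement, see \cite[Theorem~4.25]{GZbook}'', and your argument is precisely the standard reduction to that local result via the multilinear expansion~\eqref{equ:prodbd}, a partition of unity, and the addition of local psh potentials for $C\om_X$. The bookkeeping you describe (including the comparison of global and local capacities, handled in~\cite[Chapter~4]{GZbook}) is exactly what is needed to make ``direct consequence'' precise.
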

Recall that a function $f$ is \emph{quasi-continuous} if is continuous outside open subsets of arbitrarily small capacity. Any quasi-psh function is quasi-continuous, and we thus have: 

\begin{exam}\label{exam:qcont} Pick finitely many quasi-psh functions $\f_i$, and consider the plurifine open set $O:=\{\f>0\}$ with $\f:=\min_i\f_i$. For each $\e>0$, the function $\chi_\e\colon X\to [0,1]$ defined by 
$$
\chi_\e:=\frac{\max\{\f,0\}}{\max\{\f,0\}+\e}
$$
is then quasi-continuous, and $\chi_\e$ increases pointwise to $\one_O$ as $\e\searrow 0$. 
\end{exam}

\begin{proof}[Proof of Proposition~\ref{prop:lscmass}] For each $k$ and $t,\e>0$, consider as in Example~\ref{exam:qcont} the quasi-continuous approximations 
$$
\chi_{k,\e,t}:=\frac{\max\{\min_i\f_i^k+t,0\}}{\max\{\min_i\f_i^k+t,0\}+\e},\quad\chi_{\e,t}:=\frac{\max\{\min_i\f_i+t,0\}}{\max\{\min_i\f_i+t,0\}+\e}
$$
of the characteristic functions of the plurifine open sets
$$
O_{k,t}:=\bigcap_i\{\f_i^k>-t\},\quad O_t:=\bigcap_i\{\f_i>-t\}. 
$$
Since $\bigwedge_i(\theta_i+ \ddc\f_i^k)\ge 0$ and $\chi_{k,\e,t}$ vanishes outside $O_{k,t}$, plurifine locality of nonpluripolar products yields 
$$
\int_X \bigwedge_i(\theta_i+ \ddc\f_i^k) \ge\int_X\chi_{k,\e,t} \bigwedge_i(\theta_i+ \ddc\f_i^k) =\int_X\chi_{k,\e,t}\,\bigwedge_i(\theta_i+ \ddc\max\{\f_i^k,-t\}). 
$$
Now $(\chi_{k,\e,t})_k$ is a bounded sequence of quasi-continuous fonctions converging in capacity to the quasi-continuous function $\chi_{\e,t}$, while $(\max\{\f_i^k,-t\})_k$ is a bounded sequence of $C\om_X$-psh functions converging in capacity to $\max\{\f_i,-t\}$. By Lemma~\ref{lem:capcont}, we thus have
$$
\lim_k\int_X \chi_{k,\e,t}\,\bigwedge_i(\theta_i+ \ddc\max\{\f_i^k,-t\})=\int_X \chi_{\e,t}\bigwedge_i(\theta_i+ \ddc\max\{\f_i,-t\})
$$
$$
=\int_X \chi_{\e,t}\, \bigwedge_i(\theta_i+ \ddc\f_i) , 
$$
where the last equality holds by plurifine locality again. We thus get
$$
\liminf_k\int_X \bigwedge_i(\theta_i+ \ddc\f_i^k)\ge\int_X \chi_{\e,t} \bigwedge_i(\theta_i+ \ddc\f_i), 
$$
and the result follows since 
$$
\lim_{t\to\infty}\lim_{\e\to 0}\int_X \chi_{\e,t} \bigwedge_i(\theta_i+ \ddc\f_i)=\lim_{t\to\infty}\int_{O_t}\bigwedge_i(\theta_i+ \ddc\f_i)=\int_X\bigwedge_i(\theta_i+ \ddc\f_i).
$$
\end{proof}

Finally, we provide a version, in the present non-K\"ahler setting, of the monotonicity of nonpluripolar masses with respect to singularities. To state the result, pick any $(1,1)$-form $\theta$, let $C\ge 0$ be the smallest constant such that $\pm\theta\le C\om_X$, and set 
\begin{equation}\label{equ:Dthe}
\D_\theta:=\sup_{\f,\p\in\PSH(C\om_X)\cap L^\infty}\left\{\int_X(\theta+ \ddc\f)^n-\int_X(\theta+ \ddc\p)^n\right\}. 
\end{equation}
By Stokes' theorem, $\D_\theta=0$ if $\theta$ is closed. In general,  Lemma~\ref{lem:bdmass2} yields $\D_\theta\in[0,+\infty)$ and 
\begin{equation}\label{equ:Dvar}
\D_{\theta+\e\om_X}=\D_\theta+O(\e)
\end{equation}
for $\e\in [0,1]$. 

\begin{thm}\label{thm:monomass} Assume $\f,\p\in\PSH(\theta)$ satisfy $\f\le\p+O(1)$. Then 
$$
\int_X (\theta+ \ddc\f)^n \le\int_X (\theta+ \ddc\p)^n+\D_\theta. 
$$
\end{thm}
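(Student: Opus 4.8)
The plan is to reduce, by a truncation argument, to the case where $\f$ and $\p$ have equivalent singularities, and then to treat that case by passing to analytic singularities. Since the left-hand side only depends on $\theta+\ddc\p$ and $\Delta_\theta$ does not depend on $\p$, I would first replace $\p$ by $\p+O(1)$ and thereby assume $\f\le\p$. For $t>0$ set $\f_t:=\max\{\f,\p-t\}\in\PSH(\theta)$; then $\f_t\searrow\f$ as $t\to\infty$ and $\p-t\le\f_t\le\p$, so $\f_t$ has the same singularity type as $\p$. As $\f_t\to\f$ in capacity (monotone convergence), Proposition~\ref{prop:lscmass} gives
$$\int_X(\theta+\ddc\f)^n\le\liminf_{t\to\infty}\int_X(\theta+\ddc\f_t)^n$$
(alternatively: by plurifine locality $\one_{\{\f>\p-t\}}(\theta+\ddc\f_t)^n=\one_{\{\f>\p-t\}}(\theta+\ddc\f)^n$, and $(\theta+\ddc\f_t)^n\ge0$ since $\f_t\in\PSH(\theta)$, while $\{\f>\p-t\}$ exhausts $X$ off the pluripolar set $\{\f=-\infty\}$). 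It therefore suffices to establish the estimate $\int_X(\theta+\ddc a)^n\le\int_X(\theta+\ddc b)^n+\Delta_\theta$ for any $a,b\in\PSH(\theta)$ with equivalent singularities, and then to apply it to $a=\f_t$, $b=\p$ and let $t\to\infty$.

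For this key estimate I would argue as follows. By Theorem~\ref{thm:Demreg}, write $b$ as a decreasing limit of $b_j\in\PSH(\theta+\e_j\om_X)$ with analytic singularities, and replace $a$ by $\max\{a,b_j-C\}$ so that the two approximants retain a common singularity type; using $\Delta_{\theta+\e_j\om_X}=\Delta_\theta+O(\e_j)$ from~\eqref{equ:Dvar} together with lower semicontinuity of the masses, this reduces the estimate, after letting $j\to\infty$, to the case where $a$ and $b$ have analytic singularities. Taking a common log-resolution (Lemmas~\ref{lem:analres} and~\ref{lem:resolv}), the singularities become divisorial along one and the same effective divisor $E$ (the same for $a$ and $b$ because their singularity types agree), so that $\pi^\star(\theta+\ddc a)=\beta_a+[E]$ and $\pi^\star(\theta+\ddc b)=\beta_b+[E]$ with $\beta_a,\beta_b$ smooth, semipositive, and $\beta_a-\beta_b=\ddc(v_a-v_b)$ where $v_a-v_b=\pi^\star(a-b)$ is bounded. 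As $E$ is pluripolar, $\int_X(\theta+\ddc a)^n=\int_Y\beta_a^n$ and $\int_X(\theta+\ddc b)^n=\int_Y\beta_b^n$, so the difference of masses equals $\int_Y(\pi^\star\theta+\ddc v_a)^n-\int_Y(\pi^\star\theta+\ddc v_b)^n$, a difference of Monge--Amp\`ere masses of bounded $\pi^\star\theta$-psh functions; this is at most $\Delta_\theta$ once one checks that pulling back along a modification does not enlarge the defect constant, which should follow from the bimeromorphic invariance of the relevant volume quantities (Proposition~\ref{pro:bmp} and~\cite{GL22}).

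The main obstacle is precisely the equivalent-singularities estimate with the sharp constant $\Delta_\theta$. The naive route --- comparing the bounded truncations $\max\{a,-s\}$ and $\max\{b,-s\}$ by the very definition of $\Delta_\theta$ and letting $s\to\infty$ --- turns out to be circular, because the surplus mass created by truncating at level $-s$ is itself $\int_X(\theta+\ddc a)^n$ up to a uniform error and cancels the quantity one is trying to estimate; and a direct integration by parts produces an error of order $\|a-b\|_\infty$, which degenerates along the family $\f_t$ as $t\to\infty$. Avoiding both losses is what forces the detour through analytic singularities and a modification, and the delicate point there is the invariance (or at least non-increase) of the defect constant under pull-back.
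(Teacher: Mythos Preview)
Your opening reduction is correct and is exactly the closing move in the paper: replacing $\f$ by $\f_t=\max\{\f,\p-t\}$ and invoking Proposition~\ref{prop:lscmass} reduces the problem to the case of equivalent singularities. The difficulty, as you yourself flag, is entirely in that case, and your proposed route through analytic singularities has two genuine gaps.

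First, the Demailly step does not produce what you need. Approximating $b$ by $b_j\in\PSH(\theta+\e_j\om_X)$ with analytic singularities and setting $a_j:=\max\{a,b_j-C'\}$ does force $a_j=b_j+O(1)$, but $a_j$ has no reason to have analytic singularities (it is the max of a general quasi-psh function with one having analytic singularities). If instead you approximate $a$ and $b$ separately, the two sequences of approximants need not have comparable singularity types, so you lose the very hypothesis you are trying to exploit. Second, and more seriously, after resolving you land on $Y$ with bounded $\pi^\star\theta$-psh potentials $v_a,v_b$ that are \emph{not} pullbacks from $X$ (they differ from $\pi^\star a,\pi^\star b$ by a local potential of $[E]$). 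The definition of $\Delta_\theta$ lives on $X$ and involves a sup over $\PSH(C\om_X)\cap L^\infty(X)$; you would need $\Delta_{\pi^\star\theta}\le\Delta_\theta$ on $Y$, which is not what the bimeromorphic invariance results in \cite{GL22} or Proposition~\ref{pro:bmp} give you --- those control $\uvol$, not the defect, and the sup on $Y$ runs over a strictly larger class of functions.

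The paper sidesteps both issues with a single trick that stays on $X$. Instead of proving the estimate for arbitrary $a,b$ with equivalent singularities, it first treats the special case where $\f=\p$ outside a compact set $K$ on which both are bounded: there plurifine locality lets one replace $\f,\p$ by the bounded truncations $\max\{\f,-t\},\max\{\p,-t\}$ without changing the difference of masses, and the bound by $\Delta_\theta$ is then immediate from its definition. To reduce the general case to this one, the paper sets
\[
\f_{\e,t}:=\max\{(1+\e)\f,\p-t\}\in\PSH(\theta+C\e\om_X),
\]
the point being that the factor $(1+\e)$ forces $(1+\e)\f<\p-t$ outside a compact sublevel set of $\f$ (since $\f\le\p+O(1)$ but $(1+\e)\f$ goes to $-\infty$ strictly faster), so $\f_{\e,t}=\p-t$ there. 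One then applies the special case to the pair $(\f_{\e,t},\p)$ in the class $\theta+C\e\om_X$, uses~\eqref{equ:Dvar} to replace $\Delta_{\theta+C\e\om_X}$ by $\Delta_\theta+O(\e)$, lets $\e\to 0$ via lower semicontinuity, and finally lets $t\to\infty$ as in your first paragraph. This $(1+\e)$-dilation is the missing idea; it avoids any passage to a modification and keeps the defect constant on $X$ throughout.
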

\begin{proof} The proof goes along the same lines as~\cite[Lemma~3.1]{DDL2}. Assume first $\f=\p$ outside some compact subset $K\subset X$ on which $\f,\p$ are bounded. Pick $t\gg 1$ such that $\f,\p>-t$ on $K$, and consider the bounded $C\om_X$-psh functions 
$$
\f_t:=\max\{\f,-t\},\quad\p_t:=\max\{\p,-t\}.
$$
By plurifine locality, we then have 
$$
\int_K(\theta+ \ddc\f)^n=\int_K(\theta+ \ddc\f_t)^n,\quad\int_K(\theta+ \ddc\p)^n=\int_K(\theta+ \ddc\p_t)^n,
$$
while $\f=\p,\f_t=\p_t$ on the open set $X\setminus K$ similarly yields
$$
\int_{X\setminus K}(\theta+ \ddc\f)^n=\int_{X\setminus K}(\theta+ \ddc\p)^n,\quad\int_{X\setminus K}(\theta+ \ddc\f_t)^n=\int_{X\setminus K}(\theta+ \ddc\p_t)^n.
$$
This implies 
$$
\int_X(\theta+ \ddc\f)^n-\int_X(\theta+ \ddc\p)^n=\int_X(\theta+ \ddc\f_t)^n-\int_X(\theta+ \ddc\p_t)^n,
$$
and the result follows since the right-hand side is bounded by $\D_\theta$. Returning to the general case, set 
$$
\f_{\e,t}:=\max\{(1+\e)\f,\p-t\}\in\PSH(\theta+C\e\om_X), 
$$
where $\e,t>0$ are fixed for the moment. Since $\f\le\p+O(1)$, we have $\f_{\e,t}=\p-t$ outside the compact set $\{\f\ge -s\}$ for $s\gg 1$, and the first part of the proof thus yields
$$
\int_X(\theta+C\e\om_X+ \ddc\f_{\e,t})^n\le\int_X(\theta+C\e\om_X+ \ddc\p)^n+\D_{\theta+C\e\om_X}. 
$$
For $0<\e\le\d$, we infer 
$$
\int_X(\theta+C\d\om_X+ \ddc\f_{\e,t})^n=\int_X(\theta+C\e\om_X+\ddc\f_{\e,t})^n+O(\d)
$$
$$
\le\int_X(\theta+C\e\om_X+ \ddc\p)^n+\D_\theta+O(\d),
$$
using Lemma~\ref{lem:bdmass} and~\eqref{equ:Dvar}. Since $\f_{t,\e}\in\PSH(\theta+C\d\om_X)$ increases to $\max\{\f,\p-t\}$ as $\e\to 0$, Proposition~\ref{prop:lscmass} now yields 
$$
\int_X(\theta+C\d\om+ \ddc\max\{\f,\p-t\})^n\le\int_X(\theta+ \ddc\p)^n+\D_\theta+O(\d). 
$$
Letting $\d\to 0$, we get
$$
\int_X(\theta+ \ddc\max\{\f,\p-t\})^n\le\int_X(\theta+ \ddc\p)^n+\D_\theta. 
$$
Since $\max\{\f,\p-t\}$ decreases to $\f$ as $t\to\infty$,  the conclusion now
follows from Proposition~\ref{prop:lscmass}. 
\end{proof}

\subsection{Monge--Amp\`ere measures of quasi-psh envelopes}
In this section, we fix a big $dd^c$-class $\{\theta\} \in \BC^{1,1}(X,\mathbb R)$. We also pick $\rho \in \PSH(\theta)$ with analytic singularities such that $\theta+dd^c \rho \geq \omega_X$ (after perhaps scaling $\om_X$), and consider the Zariski open subset 
$$
\Omega:=\{\rho>-\infty\}.
$$
\begin{thm}\label{thm:envelope}
Pick a quasi-continuous function $f\colon X\to[-\infty,\infty]$, and assume that the usc regularization $\env_\theta^\star(f)$ of its $\theta$-psh envelope~\eqref{equ:pshenv} is $\theta$-psh. Then
$$
\int_{\{\env_\theta^\star(f)<f\}} (\theta+dd^c \env_\theta^\star(f))^n =0. 
$$
\end{thm}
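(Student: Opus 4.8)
The strategy is to prove that $u:=\env_\theta^\star(f)$ is \emph{locally maximal} on the locus $\{u<f\}$, in the sense of Bedford--Taylor, and to conclude from the classical local fact that a locally maximal (quasi-)plurisubharmonic function has vanishing Monge--Amp\`ere measure. The forms $\rho$ and $\Omega=\{\rho>-\infty\}$ enter only to provide a Zariski-open set with pluripolar complement on which, after truncation, $u$ is locally bounded, so that the local Bedford--Taylor comparison principle applies.

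First the reductions. Since $(\theta+\ddc u)^n$ charges no pluripolar set and $X\setminus\Omega$ is pluripolar (being a proper analytic subset), it suffices to show that $(\theta+\ddc u)^n$ vanishes on $\{u<f\}\cap\Omega$. Normalizing we may assume $u\le 0$; since $\{u>-\infty\}=\bigcup_k\{u>-k\}$ has pluripolar complement and $u$ coincides on $\{u>-k\}$ with the bounded $\theta$-psh function $\max\{u,-k\}$, plurifine locality of the non-pluripolar product (Theorem~\ref{thm:npp}) together with monotone convergence reduces us to a local statement on a small coordinate ball $B\Subset\Omega$ on which $u$ is bounded (here $V_\theta\ge\rho-\sup_X\rho$ with $\rho$ smooth on $\Omega$ guarantees such $B$ exist; the passages to the limit use only Theorem~\ref{thm:monomass} and Proposition~\ref{prop:lscmass}). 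On $B$ we invoke Lemma~\ref{lem:approxclosed} to replace $\theta$ by a closed form $\theta'=\ddc\rho_0$ with $\rho_0$ smooth and $-\e\om_X\le\theta-\theta'\le\e\om_X$, for a prescribed $\e>0$; all errors incurred in passing between $\theta$ and $\theta'$ are $O(\e)$ in Monge--Amp\`ere mass by multilinearity, hence harmless after letting $\e\to0$ (using Proposition~\ref{prop:lscmass}).

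The core argument, for $f$ continuous on the relevant region, is the following. Suppose $(\theta+\ddc u)^n(\{u<f\}\cap B)>0$. Pick a small ball $O$ with $\overline O\subset\{u<f\}\cap B$ and $(\theta+\ddc u)^n(O)>0$, and solve on $O$ the Dirichlet problem for $\theta'+\ddc\cdot$ with boundary data $u|_{\partial O}$: the Perron envelope $v$ is $\theta'$-psh on $O$ with $v\ge u$, $(\theta'+\ddc v)^n=0$ on $O$, and $\limsup_{z\to\zeta}v(z)\le u(\zeta)$ for $\zeta\in\partial O$. If $\e$ is small then $(\theta'+\ddc u)^n$ still has positive mass on $O$, so $v\not\equiv u$; hence $w:=v$ on $O$, $w:=u$ on $X\setminus O$, is $\theta'$-psh, satisfies $w\ge u$ with $w>u$ on a set of positive Lebesgue measure, and $w\le f$ outside a pluripolar set---indeed $w=u\le f$ outside a pluripolar set off $O$, while on $\{w>u\}\subset O\subset\{u<f\}$ we arrange, shrinking $O$, that $v<f$, using $u<f$ and continuity of $f$ on $\overline O$ together with $v\le\sup_{\partial O}u+O(\mathrm{diam}\,O)$. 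Enlarging the class by $\e\om_X$ turns $w$ into a $(\theta+\e\om_X)$-psh competitor below $f$ outside a pluripolar set that lies strictly above $u=\env_\theta^\star(f)$ on a non-pluripolar set; letting $\e\to0$ this contradicts the definition of the envelope, so $(\theta+\ddc u)^n$ vanishes on $\{u<f\}\cap B$.

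I expect the genuine difficulty to be the very first step of the core argument: producing an honest ball $O$ with $\overline O\subset\{u<f\}$ carrying Monge--Amp\`ere mass. When $f$ is continuous this is immediate because $\{u<f\}$ is then open; but $f$ is merely quasi-continuous, so $\{u<f\}$ is only \emph{plurifine}-open and may have empty interior. The way around this is to combine quasi-continuity of $f$ with the Chern--Levine--Nirenberg inequality: for every $\eta>0$ there is an open $G$ with $\Capa(G)<\eta$ on which $f$ is continuous, and since $u$ is bounded on $B$ one has $(\theta+\ddc u)^n(G\cap B)\le C\,\Capa(G)<C\eta$, so a definite fraction of the Monge--Amp\`ere mass survives on $\{u<f\}\setminus G$, which is relatively open in the closed set $X\setminus G$ where $f$ is continuous; from this one extracts a genuine ball on which $u<f$ and which still carries mass. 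Making this extraction rigorous---keeping simultaneous control of the mass, of the relative openness, and of the behaviour near $\partial G$---is the technical heart of the argument; beyond it everything is a routine adaptation of the K\"ahler case in \cite{BEGZ10} (compare also the treatment of envelopes in \cite{GZbook}).
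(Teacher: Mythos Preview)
Your detour through the closed approximation $\theta'$ creates a real gap. The competitor $w$ built from the Dirichlet solution for $\theta'+\ddc$ is only $\theta'$-psh, hence only $(\theta+\e\om_X)$-psh; it therefore lies below $\env_{\theta+\e\om_X}^\star(f)$, not below $u=\env_\theta^\star(f)$, and the strict inequality $w>u$ on a non-pluripolar set yields no contradiction. Your ``letting $\e\to 0$'' does not repair this: the neighbourhood furnished by Lemma~\ref{lem:approxclosed}, the form $\theta'$, the ball $O$, and the competitor $w$ all depend on $\e$, and nothing prevents the excess $w-u$ from collapsing in the limit. The fix is simply to run the balayage directly with $\theta$. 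On a ball $O\Subset\{u<f\}\cap\Omega$ the Perron envelope of bounded $\theta$-psh functions with boundary data $u$ is itself $\theta$-psh, agrees with $u$ on $\partial O$, and has vanishing $(\theta+\ddc\cdot)^n$; this Hermitian maximality statement follows from the local comparison principle, Lemma~\ref{lem: CP via envelopes}. The glued function is then genuinely $\theta$-psh and the contradiction with the envelope property of $u$ is immediate, with no $\e$ to manage.

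Your treatment of the quasi-continuous case is also different from the paper's, and the step you flag as the ``technical heart''---extracting a Euclidean ball carrying Monge--Amp\`ere mass from $\{u<f\}\setminus G$, which is only \emph{relatively} open in the closed set $X\setminus G$---is genuinely problematic: such a set can have empty interior in $X$, so there need not be any ball to extract. The paper sidesteps this entirely. It first settles the case where $f$ is lsc (so $\{u<f\}$ is honestly open and balayage applies directly), and then handles a bounded quasi-continuous $f$ by writing it, outside a pluripolar set, as the decreasing limit of bounded lsc functions $f_j$ (via~\cite[Lemma~2.4]{GLZ19} and~\cite[Theorem~2.7]{DDL2}). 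The envelopes $u_j:=\env_\theta^\star(f_j)$ decrease to $u$, the lsc step gives $\int_X\tfrac{\max\{f_j-u_j,0\}}{\max\{f_j-u_j,0\}+\e}(\theta+\ddc u_j)^n=0$, and one passes to the limit in $j$ (then $\e$) using convergence of Monge--Amp\`ere measures along decreasing sequences as in Proposition~\ref{prop:lscmass}. No localisation inside a plurifine-open set is ever needed.
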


\begin{proof} Replacing $f$ with $\min\{f,\sup_X\env_\theta^\star(f)\}$, we may assume without loss that $f$ is bounded above. Assume further that $f$ is bounded. When $f$ is lsc, the result follows from a standard balayage argument. If $f$ is merely bounded, we can find a decreasing sequence $(f_j)$ of bounded lsc functions that decreases pointwise to $f$ outside a pluripolar set (see~\cite[Lemma 2.4]{GLZ19} and~\cite[Theorem 2.7]{DDL2}). Using that $u_j:= \env^\star_\theta(f_j)$ (resp.~$u:=\env_\theta^\star(f)$) is the largest $\theta$-psh function dominated by $f_j$ (resp.~$f$) outside a pluripolar set, it is easy to see that $u_j$ decreases pointwise to $u$. By the previous step of the proof, we further have
$$	
\int_X \frac{\max\{f_j-u_j,0\}}{\max\{f_j-u_j,0\}+\e} (\theta+dd^c u_j)^n =0 
$$
for each $\e>0$. Letting $j\to +\infty$, and arguing as in Proposition \ref{prop:lscmass}, we obtain 
$$
\int_X \frac{\max\{f-u,0\}}{\max\{f-u,0\}+\e} (\theta+dd^c u)^n =0, 
$$
and letting $\e\to 0$ yields the result in the bounded case. If $f$ is not bounded, we approximate it by $f_j=\min(f,-j)$, and proceed as above. 
\end{proof}

\begin{rmk}
When $\env_\theta^\star(f)$ is locally bounded in $\Omega$, the measure $(\theta+dd^c \env_\theta^\star(f))^n$ is understood in the classical sense of Bedford--Taylor in $\Omega$, and can thus be viewed as a nonpluripolar measure on $X$ without assuming the bounded mass property. The same remark applies for the results below. 
\end{rmk}

\begin{prop}\label{prop:maxprinciple}
	Assume $\{\theta\} \in \BC^{1,1}(X,\mathbb R)$ is big. If $u,v\in \PSH(\theta)$ then 
	$$
	(\theta+dd^c \max\{u,v\})^n \geq \one_{\{u\ge v\}}(\theta+dd^c u)^n + \one_{\{v>u\}}(\theta+dd^c v)^n. 
	$$
	In particular, if $u\leq v$, then 
	$$
	\one_{\{u=v\}}(\theta+dd^c u)^n \leq \one_{\{u=v\}}(\theta+dd^c v)^n. 
	$$
\end{prop}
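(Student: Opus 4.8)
The plan is to first prove the one-sided inequality $(\theta+dd^c\max\{u,v\})^n\ge\one_{\{u\ge v\}}(\theta+dd^c u)^n$, and then deduce the proposition from it by a Borel partition argument. Granting this, write $w:=\max\{u,v\}$ and decompose $X$ into the Borel sets $\{u>v\}$, $\{v>u\}$ and the contact set $\{u=v\}$. On the plurifine open set $\{u>v\}$ one has $w=u$, so plurifine locality of the non-pluripolar product (Theorem~\ref{thm:npp}(ii)) gives $\one_{\{u>v\}}(\theta+dd^c w)^n=\one_{\{u>v\}}(\theta+dd^c u)^n$, which is exactly the right-hand side of the claimed inequality restricted to $\{u>v\}$; symmetrically, $\one_{\{v>u\}}(\theta+dd^c w)^n=\one_{\{v>u\}}(\theta+dd^c v)^n$. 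On the contact set, multiplying the one-sided inequality by $\one_{\{u=v\}}$ gives $\one_{\{u=v\}}(\theta+dd^c w)^n\ge\one_{\{u=v\}}(\theta+dd^c u)^n$, which is again the right-hand side there. Adding these three Borel-localized inequalities yields the proposition, and the ``in particular'' assertion is simply the case $u\le v$ of the main inequality restricted to $\{u=v\}$, where $w=v$.

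It thus remains to prove the one-sided inequality, and I would do this first for bounded $u,v\in\PSH(\theta)\cap L^\infty$. Set $w_\e:=\max\{u+\e,v\}\in\PSH(\theta)\cap L^\infty$, which decreases pointwise to $w=\max\{u,v\}$ as $\e\downarrow 0$. On the plurifine open set $\{u+\e>v\}$, which contains $\{u\ge v\}$, one has $w_\e=u+\e$; hence plurifine locality and positivity of Bedford--Taylor products (Proposition~\ref{prop:prodbd} and~\eqref{equ:posprod}, taking $K=X$) give, as positive measures, $(\theta+dd^c w_\e)^n\ge\one_{\{u+\e>v\}}(\theta+dd^c w_\e)^n=\one_{\{u+\e>v\}}(\theta+dd^c u)^n\ge\one_{\{u\ge v\}}(\theta+dd^c u)^n$. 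Letting $\e\downarrow 0$ and using continuity of Bedford--Taylor products along decreasing sequences of bounded quasi-psh functions, the left-hand side converges weakly to $(\theta+dd^c w)^n$, while the right-hand side is a fixed measure; this gives $(\theta+dd^c w)^n\ge\one_{\{u\ge v\}}(\theta+dd^c u)^n$ in the bounded case.

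To reach general $u,v\in\PSH(\theta)$, apply the bounded case to the truncations $u_t:=\max\{u,-t\}$ and $v_t:=\max\{v,-t\}$, obtaining $(\theta+dd^c\max\{u_t,v_t\})^n\ge\one_{\{u_t\ge v_t\}}(\theta+dd^c u_t)^n$. On the plurifine open set $O_t:=\{u>-t\}\cap\{v>-t\}$ one has $u_t=u$, $v_t=v$ and $\max\{u_t,v_t\}=w$, so multiplying by $\one_{O_t}$ and using plurifine locality turns this into $\one_{O_t}(\theta+dd^c w)^n\ge\one_{O_t\cap\{u\ge v\}}(\theta+dd^c u)^n$. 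As $t\to\infty$, $O_t$ increases to $X\setminus(\{u=-\infty\}\cup\{v=-\infty\})$, whose complement is pluripolar and hence carries no mass for the non-pluripolar measures $(\theta+dd^c w)^n$ and $(\theta+dd^c u)^n$; letting $t\to\infty$ therefore yields $(\theta+dd^c w)^n\ge\one_{\{u\ge v\}}(\theta+dd^c u)^n$ in general.

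The crux of the argument is the contact set $\{u=v\}$: it is not plurifine open, so plurifine locality alone does not pin down $\one_{\{u=v\}}(\theta+dd^c w)^n$, and the role of the perturbation $w_\e=\max\{u+\e,v\}$, combined with the continuity of bounded Bedford--Taylor products along decreasing sequences, is precisely to bypass this. The only other delicate point is that this continuity is available only for bounded potentials, which is what forces the truncation step and the accompanying observation that the exceptional pluripolar set is negligible for every measure involved.
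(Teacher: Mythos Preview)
Your proof is correct and takes a somewhat different route from the paper's. The paper first establishes the full inequality for $u,v$ with \emph{minimal singularities}---where both are locally bounded on the Zariski open set $\Omega=\{\rho>-\infty\}$---by quoting~\cite[Lemma~1.2]{GL22} for the local result on $\Omega$, and then approximates general $u,v$ by $u_t:=\max\{u,V_\theta-t\}$, $v_t:=\max\{v,V_\theta-t\}$, multiplying by $\one_{\{\min\{u,v\}>V_\theta-t\}}$ and letting $t\to\infty$. You instead prove the one-sided bounded case directly and self-containedly via the $\e$-perturbation $w_\e=\max\{u+\e,v\}$ and continuity of Bedford--Taylor products along decreasing sequences (this is essentially reproving the cited lemma inline), then recover the two-sided statement by a Borel partition, and finally truncate by constants $\max\{u,-t\}$ rather than by $V_\theta-t$. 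Your approach has the virtue of being self-contained, while the paper's choice of truncating by $V_\theta-t$ keeps all auxiliary functions locally bounded in $\Omega$, which is the natural setting when one wishes to dispense with the bounded mass property altogether (cf.\ the remark following Theorem~\ref{thm:envelope}); your invocation of Theorem~\ref{thm:npp} in the unbounded step tacitly relies on that property.
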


\begin{proof}
	If $u$ and $v$ have minimal singularities, then they are locally bounded in $\Omega$. By~\cite[Lemma 1.2]{GL22}, the result thus holds on $\Om$, and hence on $X$ since all measures involved are nonpluripolar. In the general case, we approximate $u,v$ by the $\theta$-psh functions with minimal singularities
	$$
	u_t:=\max\{u,V_{\theta}-t\},\quad v_t:=\max\{v,V_\theta-t\}.
	$$
	The previous step yields 
	$$
	(\theta+dd^c \max\{u_t,v_t\})^n \geq \one_{\{v_t\leq u_t\}}(\theta+dd^c u_t)^n + \one_{\{u_t<v_t\}}(\theta+dd^c v_t)^n. 
	$$
	Multiplying with $\one_{\{\min\{u,v\} >V_{\theta}-t\}}$,  we get 
	\begin{flalign*}
		\one_{\{\min\{u,v\} >V_{\theta}-t\}}(\theta+dd^c \max\{u,v\})^n &\leq \one_{\{\min\{u,v\} >V_{\theta}-t\}}\one_{\{v\leq u\}}(\theta+dd^c u)^n \\
		&+ \one_{\{\min\{u,v\} >V_{\theta}-t\}}\one_{\{u<v\}}(\theta+dd^c v)^n. 
	\end{flalign*}
	We finally let $t\to +\infty$ to obtain the result. 
\end{proof}

As a consequence we infer the following minimum principle.

\begin{cor}\label{cor:minprinciple}
	Pick $u,v \in \PSH(\theta)$ such that there exists $w\in\PSH(\theta)$ with $w\le u,v$, and hence $\env_\theta(\min\{u,v\})\in \PSH(\theta)$. Then 
	\[
	(\theta+dd^c \env_\theta(\min\{u,v\}))^n \leq \one_{\{\env_\theta(\min\{u,v\})=v\leq u\}}(\theta+dd^c v)^n + \one_{\{\env_\theta(\min\{u,v\})=u<v\}}(\theta+dd^c u)^n.
	\] 
\end{cor}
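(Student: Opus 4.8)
The plan is to deduce the corollary from two ingredients already established in this section: the envelope theorem (Theorem~\ref{thm:envelope}), which says that the Monge--Amp\`ere measure of a $\theta$-psh envelope is concentrated on its contact set, and the comparison of Monge--Amp\`ere measures on a coincidence set provided by Proposition~\ref{prop:maxprinciple}. Throughout I would write $\phi:=\env_\theta(\min\{u,v\})$ and $f:=\min\{u,v\}$, noting that $\phi\le f\le u$ and $\phi\le f\le v$, and that $f$ is usc (a finite minimum of usc functions) and quasi-continuous (a finite minimum of quasi-psh, hence quasi-continuous, functions).

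First I would check that Theorem~\ref{thm:envelope} applies to $f$. The regularized envelope $\env_\theta^\star(f)$ is not identically $-\infty$, since the hypothesis provides a $\theta$-psh competitor $w\le f$; and it is not identically $+\infty$, since $\env_\theta(f)\le f\le u\not\equiv+\infty$. Hence alternative (c) of \S\ref{sec:quasipsh} holds, and as $f$ is usc, $\env_\theta(f)$ is already usc, so $\phi=\env_\theta^\star(f)$ is $\theta$-psh. Theorem~\ref{thm:envelope} then gives $\int_{\{\phi<f\}}(\theta+dd^c\phi)^n=0$, and since $\phi\le f$ pointwise, the nonpluripolar measure $(\theta+dd^c\phi)^n$ is carried by the contact set $\{\phi=f\}$. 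The next step is the elementary observation that this set is the disjoint union
$$
\{\phi=f\}=\{\phi=v\le u\}\ \sqcup\ \{\phi=u<v\},
$$
because $f=v$ on $\{v\le u\}$ and $f=u$ on $\{u<v\}$, the two sets being disjoint and covering $X$. Consequently
$$
(\theta+dd^c\phi)^n=\one_{\{\phi=v\le u\}}(\theta+dd^c\phi)^n+\one_{\{\phi=u<v\}}(\theta+dd^c\phi)^n.
$$

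It then remains to bound each of the two pieces. On $\{\phi=v\le u\}$ I would use that $\phi\le v$ globally, with equality there: the ``in particular'' part of Proposition~\ref{prop:maxprinciple} gives $\one_{\{\phi=v\}}(\theta+dd^c\phi)^n\le\one_{\{\phi=v\}}(\theta+dd^c v)^n$, and multiplying both sides by $\one_{\{v\le u\}}$ (which preserves inequalities of positive measures) yields $\one_{\{\phi=v\le u\}}(\theta+dd^c\phi)^n\le\one_{\{\phi=v\le u\}}(\theta+dd^c v)^n$. The symmetric argument on $\{\phi=u<v\}$, with $u$ in place of $v$, gives $\one_{\{\phi=u<v\}}(\theta+dd^c\phi)^n\le\one_{\{\phi=u<v\}}(\theta+dd^c u)^n$. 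Adding these two inequalities and using the decomposition above produces exactly the claimed bound.

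This argument is essentially bookkeeping once the two quoted results are in place, so I do not foresee a genuine obstacle; the only point requiring a little care is verifying that $f=\min\{u,v\}$ is an admissible input for Theorem~\ref{thm:envelope} and that $\env_\theta(\min\{u,v\})$ is itself $\theta$-psh, rather than merely its usc regularization --- and this is precisely where the hypothesis guaranteeing a common minorant $w$, together with the upper semicontinuity of $\min\{u,v\}$, is used.
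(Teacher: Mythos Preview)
Your proof is correct and follows exactly the approach the paper intends: the corollary is stated immediately after Proposition~\ref{prop:maxprinciple} as ``a consequence'' with no proof given, and the natural argument is precisely the one you spell out---use Theorem~\ref{thm:envelope} to concentrate $(\theta+dd^c\phi)^n$ on the contact set $\{\phi=\min\{u,v\}\}$, split this set according to whether $v\le u$ or $u<v$, and apply the ``in particular'' clause of Proposition~\ref{prop:maxprinciple} on each piece. Your care in checking that $\min\{u,v\}$ is usc and quasi-continuous, so that Theorem~\ref{thm:envelope} applies and $\env_\theta(\min\{u,v\})$ is already $\theta$-psh without regularization, is well placed.
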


%

\section{The lower and upper volume of a $\ddc$-class} \label{sec:vol}

As above, $X$ denotes an $n$-dimensional compact complex manifold, 
assumed throughout this section to have the bounded mass property. We introduce and study the notion of lower and upper volume of a $dd^c$-class. 

%

\subsection{The lower and upper volume of a current} 
Extending~\cite[Definition 3.1]{GL22}, we introduce:

\begin{defi} The \emph{lower volume} and \emph{upper volume} of a quasi-closed positive $(1,1)$-current $T\ge 0$ are defined as  
$$
\lvol(T):=\inf_S\int_X S^n,\quad\uvol(T):=\sup_S\int_X S^n, 
$$
where $S$ ranges over all quasi-closed positive $(1,1)$-currents in the same $\ddc$-class as $T$ and with equivalent singularities. 
\end{defi}
Writing $T=\theta+\ddc\f$ with $\theta$ smooth and $\f\in\PSH(\theta)$, we equivalently have 
\begin{equation}\label{equ:volcurr}
\lvol(T)=\inf_\p\int_X(\theta+\ddc\p)^n,\quad\uvol(T)=\sup_\p\int_X(\theta+\ddc\p)^n
\end{equation}
where $\p$ ranges over all $\theta$-psh functions such that $\p=\f+O(1)$. 

By definition, the upper and lower volumes of $T$ only depend on its $\ddc$-class and singularity class, and satisfy 
$$
0\le\lvol(T)\le\uvol(T)<\infty, 
$$
see Lemma~\ref{lem:bdmass2}. Note also that 
\begin{equation}\label{equ:uvolpos}
T>0\Longrightarrow \uvol(T)>0. 
\end{equation}
The analogous result for $\lvol(T)$ is unclear in general, even when $T$ is smooth (see~\S\ref{sec:posvol}). As a consequence of Theorem~\ref{thm:monomass}, we have:  

\begin{exam}\label{exam:vclosed}
 If $T$ is closed, then $\lvol(T)=\uvol(T)=\int_X T^n$.
\end{exam}

We first collect a few simple properties:

\begin{prop}\label{prop:lvol} 
Pick a quasi-closed current $T\ge 0$. 
Then:
\begin{itemize}
\item[(i)] for all $t\in\R_{>0}$ we have 
$$
\lvol(tT)=t^n\lvol(T),\quad \uvol(tT)=t^n\uvol(T);
$$
\item[(ii)] for any quasi-closed current $T'\ge 0$ and $\e\in[0,1]$, we have 
\begin{equation}\label{equ:vsum}
\lvol(T+\e T')\le \lvol(T)+C\e 
\end{equation}
for a constant $C>0$ only depending on $\{T\},\{T'\}$;
\item[(iii)] for any effective $\R$-divisor $E$ we have 
$$
\lvol(T+[E])=\lvol(T),\quad\uvol(T+[E])=\uvol(T);
$$
\item[(iv)] for any modification $\pi\colon Y\to X$ we have 
$$
\lvol(\pi^\star T)=\lvol(T),\quad\uvol(\pi^\star T)=\uvol(T).
$$
\end{itemize}
\end{prop}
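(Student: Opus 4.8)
The plan is to reduce each of the four items to the description~\eqref{equ:volcurr} of $\lvol$ and $\uvol$ in terms of $\theta$-psh potentials, combined with multilinearity and positivity of non-pluripolar products (Theorem~\ref{thm:npp}) and the uniform mass bound of Lemma~\ref{lem:bdmass2}. Fix a decomposition $T=\theta+\ddc\f$ with $\theta$ smooth and $\f\in\PSH(\theta)$.

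\emph{Items (i) and (ii).} For (i) the map $\psi\mapsto t\psi$ is a bijection from $\{\psi\in\PSH(\theta):\psi=\f+O(1)\}$ onto $\{\chi\in\PSH(t\theta):\chi=t\f+O(1)\}$, under which $(t\theta+\ddc(t\psi))^n=t^n(\theta+\ddc\psi)^n$; taking the infimum, resp.\ supremum, over $\psi$ gives the claim. For (ii), write $T'=\theta'+\ddc\f'$ with $\f'\in\PSH(\theta')$, so $T+\e T'=(\theta+\e\theta')+\ddc(\f+\e\f')$. Given $\psi\in\PSH(\theta)$ with $\psi=\f+O(1)$, the function $\psi+\e\f'$ lies in $\PSH(\theta+\e\theta')$ since $(\theta+\ddc\psi)+\e(\theta'+\ddc\f')\ge 0$, and equals $\f+\e\f'+O(1)$, hence is an admissible competitor for $\lvol(T+\e T')$. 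Expanding by multilinearity,
$$
\int_X\big((\theta+\ddc\psi)+\e(\theta'+\ddc\f')\big)^n=\int_X(\theta+\ddc\psi)^n+\sum_{k=1}^n\binom{n}{k}\e^k\int_X(\theta+\ddc\psi)^{n-k}\wedge(\theta'+\ddc\f')^k ,
$$
each mixed term being nonnegative and bounded by a constant depending only on $\{T\}=\{\theta\}$ and $\{T'\}=\{\theta'\}$ by Lemma~\ref{lem:bdmass2}; since $\e^k\le\e$ for $\e\in[0,1]$, we obtain $\int_X(\theta+\e\theta'+\ddc(\psi+\e\f'))^n\le\int_X(\theta+\ddc\psi)^n+C\e$ with $C=C(\{T\},\{T'\})$, and the infimum over $\psi$ yields~\eqref{equ:vsum}.

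\emph{Item (iii).} Choosing smooth Hermitian metrics on the bundles $\cO(D_i)$ attached to the components of $E=\sum_i a_iD_i$, the Lelong--Poincar\'e formula gives $[E]=\eta_E+\ddc\psi_E$ with $\eta_E$ smooth closed and $\psi_E$ quasi-psh with divisorial singularities; moreover $\psi_E$ is smooth on $\Omega':=X\setminus\supp E$, where $\eta_E+\ddc\psi_E=0$. The first key point is that for any quasi-closed positive $S=\theta+\ddc\psi\ge 0$ and $1\le k\le n$ the mixed product $S^{n-k}\wedge[E]^k$ vanishes: it puts no mass on the pluripolar set $\supp E$, while on the plurifine open set $\Omega'$ plurifine locality identifies it with $S^{n-k}\wedge(\eta_E+\ddc\psi_E)^k=0$. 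Hence $(S+[E])^n=S^n$, so $\int_X(S+[E])^n=\int_X S^n$. Writing $T+[E]=(\theta+\eta_E)+\ddc(\f+\psi_E)$, the map $\psi\mapsto\psi+\psi_E$ sends admissible competitors for $\lvol(T)$ to admissible ones for $\lvol(T+[E])$ with the same integral, giving $\lvol(T+[E])\le\lvol(T)$ and $\uvol(T+[E])\ge\uvol(T)$. For the reverse inequalities, let $\chi\in\PSH(\theta+\eta_E)$ with $\chi=\f+\psi_E+O(1)$. On $\Omega'$ one has $\ddc\psi_E=-\eta_E$, hence $\theta+\ddc(\chi-\psi_E)=\theta+\eta_E+\ddc\chi\ge 0$ there, i.e.\ $\chi-\psi_E$ is $\theta$-psh on $\Omega'$; being bounded above near $\supp E$ (by $\f+O(1)$), it extends across the proper analytic subset $\supp E$ to some $\psi\in\PSH(\theta)$ with $\psi=\f+O(1)$, and $\chi=\psi+\psi_E$ on all of $X$ (two quasi-psh functions agreeing off a pluripolar set). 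Thus every competitor for $\lvol(T+[E])$ has the form $\psi+\psi_E$ with $\psi$ a competitor for $\lvol(T)$, with matching integrals; this gives the reverse inequalities, and likewise for $\uvol$.

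\emph{Item (iv), and the main obstacle.} Let $Z\subset X$ be a proper analytic subset outside which $\pi$ is an isomorphism. The map $\psi\mapsto\pi^\star\psi$ is a bijection between admissible competitors for $\lvol(T)$ and for $\lvol(\pi^\star T)$: injectivity is clear as $\pi$ is surjective, and given $\tilde\psi\in\PSH(\pi^\star\theta)$ with $\tilde\psi=\pi^\star\f+O(1)$, it descends through the biholomorphism $Y\setminus\pi^{-1}(Z)\simeq X\setminus Z$ to a $\theta$-psh function on $X\setminus Z$, bounded above near $Z$, which extends to $\psi\in\PSH(\theta)$ with $\psi=\f+O(1)$ and $\pi^\star\psi=\tilde\psi$ (equality off the pluripolar set $\pi^{-1}(Z)$). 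Since $\pi$ is a biholomorphism off pluripolar sets and non-pluripolar products are plurifine-local and carry no mass on pluripolar sets, $\int_Y(\pi^\star(\theta+\ddc\psi))^n=\int_{Y\setminus\pi^{-1}(Z)}(\pi^\star(\theta+\ddc\psi))^n=\int_{X\setminus Z}(\theta+\ddc\psi)^n=\int_X(\theta+\ddc\psi)^n$; taking infima, resp.\ suprema, finishes (iv). The routine parts are (i) and (ii); the real work lies in the surjectivity statements of (iii) and (iv) — that every admissible competitor for the larger current arises from one for $T$ — together with the cancellation of divisorial contributions in the non-pluripolar product. Both rest on extension of (${\theta}$-)psh functions, locally bounded above, across proper analytic (hence pluripolar) subsets, and on plurifine locality of non-pluripolar products, and this is where care is needed.
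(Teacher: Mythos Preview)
Your proof is correct and follows essentially the same approach as the paper: reduce to competitors via~\eqref{equ:volcurr}, use multilinearity and Lemma~\ref{lem:bdmass2} for (ii), and for (iii)--(iv) establish a bijection of competitors with matching integrals via extension of $\theta$-psh functions across proper analytic sets. The only cosmetic differences are that you spell out why $\int_X(S+[E])^n=\int_X S^n$ via vanishing of the mixed non-pluripolar terms (the paper simply asserts this), and in (iv) you argue by descent/extension across the exceptional locus rather than invoking that $\pi$ has connected fibers---these are two phrasings of the same standard fact.
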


\begin{proof} (i) is obvious from the definition. Pick $T'\ge 0$ quasi-closed and $0\le S\in \{T\}$ with singularities equivalent to $T$. Then $0\le S+\e T'\in \{T+\e T'\}$ has singularities equivalent to $T+\e T'$, and hence $\int_X(S+\e T')^n\ge \lvol(T+\e T')$. For $j=0,\dots,n$, $\int_X S^j\wedge T'^{n-j}$ is further bounded in terms of $\{S\}=\{T\}$ and $\{T'\}$, by Lemma~\ref{lem:bdmass2}. Thus 
$$
\int_X S^n\ge\int_X(S+\e T')^n-C\e\ge \lvol(T+\e T')-C\e
$$
for a constant $C>0$ only depending on $\{T\},\{T'\}$.
Taking the infimum over $S$ yields (ii). 

Next pick an effective $\R$-divisor $E$, and note that any $0\le S\in\{T\}$ satisfies $S+[E]\in\{T+[E]\}$ and $\int_XS^n=\int_X(S+[E])^n$. To prove (iii), it is thus enough to show that $S\mapsto S+[E]$ sets a 1--1 correspondence between positive currents in $\{T\}$ with the same singularities as $T$ and positive currents in $\{T+[E]\}$ with the same  singularities as $T+[E]$. One direction is clear. Conversely, pick $0\le S'\in\{T+[E]\}$ with the same singularities as $T+[E]$. Write 
$$
T=\theta+\ddc\p,\quad [E]=\theta_E+\ddc\p_E,\quad S'=\theta+\theta_E+\ddc\f'
$$ 
with $\theta,\theta_E$ smooth and $\p\in\PSH(\theta)$, $\p_E\in\PSH(\theta_E)$, $\f'\in\PSH(\theta+\theta_E)$. By assumption we have $\f'=\p+\p_E+O(1)$. In particular, $\f'-\p_E$ is $\theta$-psh on $X\setminus\supp E$ and bounded above, and hence uniquely extends to a $\theta$-psh function $\f$ on $X$, which satisfies $\f=\p+O(1)$. This shows that $S:=\theta+\ddc\f\ge 0$ lies in $\{T\}$, has the same singularities as $T$, and satisfies $S'=S+[E]$, which concludes the proof of (iii). 

Finally, pick a modification $\pi\colon Y\to X$. Since $\pi$ has connected fibers, each positive current in $\{\pi^\star T\}$ is of the form $\pi^\star S$ with $0\le S\in\{T\}$, and $S$ has the same singularities as $T$ iff $\pi^\star S$ has the same singularities as $\pi^\star T$. Since $\int_X S^n=\int_Y(\pi^\star S)^n$, we get (iv). 
\end{proof}

\begin{rmk} The analogue of~\eqref{equ:vsum} for $\uvol$ holds when $T>0$ (with a constant depending on $T$), but is unclear in general. 
\end{rmk}

We shall in fact mostly consider the upper and lower volumes of currents with analytic singularities (see~\S\ref{sec:qclosed}), which basically reduces to the smooth case thanks to the following consequence of Proposition~\ref{prop:lvol}~(iii),(iv). 

\begin{lem}\label{lem:vres} Assume $T\ge 0$ is quasi-closed with analytic singularities, and pick a modification $\pi\colon Y\to X$ such that $\pi^\star T=\b+[E]$ with $\b\ge 0$ smooth and $E$ an effective $\Q$-divisor. Then $\lvol(T)=\lvol(\b)$ and $\uvol(T) = \uvol(\b)$. 
\end{lem}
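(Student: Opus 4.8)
The plan is to deduce this directly from parts (iii) and (iv) of Proposition~\ref{prop:lvol}, which carry all the real content; the lemma itself is a short bookkeeping consequence, so I would not expect to introduce any new idea.

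First I would note that $\pi^\star T$ is well-defined as a quasi-closed, quasi-positive $(1,1)$-current, since $T$ is such (see~\S\ref{sec:qclosed}), and that in fact $\pi^\star T=\b+[E]\ge 0$ because $\b\ge 0$ is smooth and $[E]\ge 0$ is the integration current of an effective $\mathbb{Q}$-divisor. Applying Proposition~\ref{prop:lvol}~(iv) to the modification $\pi$ then gives $\lvol(T)=\lvol(\pi^\star T)$ and $\uvol(T)=\uvol(\pi^\star T)$. Next, since $E$ is in particular an effective $\R$-divisor and $\b$ is a smooth (hence quasi-closed) positive $(1,1)$-form on $Y$, Proposition~\ref{prop:lvol}~(iii) applied to $\b$ and $E$ yields $\lvol(\pi^\star T)=\lvol(\b+[E])=\lvol(\b)$ and, likewise, $\uvol(\pi^\star T)=\uvol(\b)$. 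Chaining the two identities produces $\lvol(T)=\lvol(\b)$ and $\uvol(T)=\uvol(\b)$, as claimed.

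The only points requiring any care — and these are what I would call the "main obstacle", such as it is — are that the pull-back $\pi^\star T$ really does belong to the $\ddc$-class $\{\pi^\star T\}=\{\b+[E]\}=\{\b\}$ with the expected singularity type (which is precisely the content of Lemma~\ref{lem:resolv}, that produces such a $\pi$ in the first place), and that the hypothesis "effective $\R$-divisor" in Proposition~\ref{prop:lvol}~(iii) covers the effective $\mathbb{Q}$-divisor $E$ occurring here. Both are immediate, and everything else is a direct substitution.
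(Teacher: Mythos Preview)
Your argument is correct and matches the paper's approach exactly: the paper states the lemma as an immediate ``consequence of Proposition~\ref{prop:lvol}~(iii),(iv)'' with no further proof, and your two-step chain $\lvol(T)=\lvol(\pi^\star T)=\lvol(\b+[E])=\lvol(\b)$ is precisely that deduction.

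One small slip in your closing paragraph: the equality of $\ddc$-classes $\{\b+[E]\}=\{\b\}$ is false in general (the class $\{[E]\}$ need not vanish), and Lemma~\ref{lem:resolv} makes no such claim. Fortunately your actual argument does not use this; Proposition~\ref{prop:lvol}~(iii) gives $\lvol(\b+[E])=\lvol(\b)$ directly, without any statement about $\ddc$-classes. Just drop that remark.
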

Building on~\cite{GL22}, we infer the following monotonicity property. 

\begin{prop}\label{prop:vmono} 
Let $T_1,T_2\ge 0$ be quasi-closed positive currents with analytic singularities. 
If $T_1\le T_2$, then $\lvol(T_1)\le\lvol(T_2)$ and $\uvol(T_1) \le \uvol(T_2)$. 
\end{prop}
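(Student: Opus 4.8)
The plan is to reduce to smooth semipositive forms by a simultaneous resolution of singularities, and then to handle $\uvol$ by a one-line multilinearity computation and $\lvol$ by an envelope argument in the spirit of~\cite{GL22}.

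First I would pick a single modification $\pi\colon Y\to X$ dominating log resolutions of the analytic singularities of both $T_1$ and $T_2$; by Lemma~\ref{lem:resolv} (the pull-back of analytic singularities being again of this type) one may write $\pi^\star T_i=\b_i+[E_i]$ on $Y$ with $\b_i$ a smooth $(1,1)$-form and $E_i$ an effective $\Q$-divisor. Working over the dense open subset of $Y$ on which $\pi$ is an isomorphism and the $E_i$ are trivial, where $\pi^\star T_i=\b_i$, one reads off $\b_i\ge 0$ from $T_i\ge 0$ and $\b_1\le\b_2$ from $T_1\le T_2$; by continuity of $\b_i$ and $\b_2-\b_1$ these hold on all of $Y$. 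Since $Y$ has the bounded mass property (Proposition~\ref{pro:bmp}), Lemma~\ref{lem:vres} gives $\lvol(T_i)=\lvol(\b_i)$ and $\uvol(T_i)=\uvol(\b_i)$, so it suffices to prove: \emph{if $0\le\b_1\le\b_2$ are smooth, then $\lvol(\b_1)\le\lvol(\b_2)$ and $\uvol(\b_1)\le\uvol(\b_2)$}. By~\eqref{equ:volcurr}, $\lvol(\b_i)$ (resp.~$\uvol(\b_i)$) is then the infimum (resp.~supremum) of $\int_X(\b_i+\ddc\psi)^n$ over $\psi\in\PSH(\b_i)\cap L^\infty$, the competitors being the $\b_i$-psh functions $\psi=O(1)$, i.e.~bounded, as $\b_i$ is smooth.

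For $\uvol$: every $\psi\in\PSH(\b_1)\cap L^\infty$ also lies in $\PSH(\b_2)\cap L^\infty$, and expanding $(\b_2+\ddc\psi)^n=\bigl((\b_1+\ddc\psi)+(\b_2-\b_1)\bigr)^n$ by multilinearity of the non-pluripolar product (Theorem~\ref{thm:npp}) exhibits it as a sum of nonnegative currents, one of which is $(\b_1+\ddc\psi)^n$; hence $\int_X(\b_1+\ddc\psi)^n\le\int_X(\b_2+\ddc\psi)^n\le\uvol(\b_2)$, and the supremum over $\psi$ gives $\uvol(\b_1)\le\uvol(\b_2)$. For $\lvol$: fix $\psi_2\in\PSH(\b_2)\cap L^\infty$ and set $\psi_1:=\env_{\b_1}(\psi_2)$, which is bounded and $\b_1$-psh with $\psi_1\le\psi_2$ (because $\b_1\ge 0$ and $\psi_2$ is bounded). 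By the envelope theorem (Theorem~\ref{thm:envelope}, which for bounded data relies only on the classical Bedford--Taylor theory and hence holds without the bigness assumption under which it is stated), $(\b_1+\ddc\psi_1)^n$ puts no mass outside the contact set $\{\psi_1=\psi_2\}$. Moreover $\psi_1\in\PSH(\b_1)\cap L^\infty\subset\PSH(\b_2)\cap L^\infty$, so $(\b_1+\ddc\psi_1)^n\le(\b_2+\ddc\psi_1)^n$ by the same expansion; and since $\psi_1\le\psi_2$ are bounded $\b_2$-psh functions, the comparison principle (Proposition~\ref{prop:maxprinciple}, likewise valid here for bounded data) yields $\one_{\{\psi_1=\psi_2\}}(\b_2+\ddc\psi_1)^n\le\one_{\{\psi_1=\psi_2\}}(\b_2+\ddc\psi_2)^n$. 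Therefore
$$
\lvol(\b_1)\le\int_X(\b_1+\ddc\psi_1)^n=\int_{\{\psi_1=\psi_2\}}(\b_1+\ddc\psi_1)^n\le\int_{\{\psi_1=\psi_2\}}(\b_2+\ddc\psi_2)^n\le\int_X(\b_2+\ddc\psi_2)^n,
$$
and taking the infimum over $\psi_2$ finishes the proof.

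I expect the $\lvol$ inequality to be the main obstacle. For $\uvol$ a single potential works simultaneously for $\b_1$ and $\b_2$; but $\lvol(\b_1)$ and $\lvol(\b_2)$ are infima over the \emph{nested} families $\PSH(\b_1)\subset\PSH(\b_2)$, so from a near-optimal potential $\psi_2$ for $\b_2$ one must fabricate a competitor in $\PSH(\b_1)$. The envelope $\env_{\b_1}(\psi_2)$ does this because Theorem~\ref{thm:envelope} concentrates its Monge--Amp\`ere mass on the contact set with $\psi_2$, precisely where the comparison principle in the fixed class $\{\b_2\}$ can be brought to bear; one must also be mindful that the smooth forms $\b_i$, though semipositive, need not define big classes, so the relevant envelope and comparison results are invoked only for the globally bounded functions $\psi_1,\psi_2$.
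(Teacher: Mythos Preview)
Your argument is correct. The reduction via a common resolution is exactly the paper's proof, and your deduction of $\b_1\le\b_2$ from $T_1\le T_2$ by restricting to the locus where the $E_i$ vanish and using continuity is the same as the paper's one-line ``and hence $\b_1\le\b_2$''. The only difference is that, for the smooth inequality $\lvol(\b_1)\le\lvol(\b_2)$, $\uvol(\b_1)\le\uvol(\b_2)$, the paper simply cites~\cite[Proposition~3.2]{GL22}, whereas you reprove it: the $\uvol$ case by the obvious inclusion of competitors plus multilinear expansion, and the $\lvol$ case by the envelope/contact-set trick. That envelope argument is essentially the one the paper itself deploys in Proposition~\ref{prop:vmono2} (with $\theta_1=\b_1$, $\theta_2=\b_2$, $\f_1=\f_2=0$), so you are not taking a genuinely different route---you are just making the black-box citation explicit using tools already in the paper. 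The one point worth flagging is that Theorem~\ref{thm:envelope} and Proposition~\ref{prop:maxprinciple} are stated under a bigness hypothesis; your remark that for globally bounded potentials they reduce to standard Bedford--Taylor balayage and the local maximum principle is correct, and indeed the proofs of those statements in the paper handle the bounded case first without invoking bigness.
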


\begin{proof} 
After passing to a joint resolution of singularities of $T_1$ and $T_2$ we may assume $T_i=\b_i+[E_i]$ with $\b_i\ge 0$ smooth and $E_i$ an effective $\Q$-divisor, and we then have $\lvol(T_i)=\lvol(\b_i)$ (see Lemma~\ref{lem:vres}). Then $T_1\le T_2$ implies $\b_1+[E_1]\le\b_2+[E_2]$, and hence $\b_1\le\b_2$. By~\cite[Proposition~3.2]{GL22} we get $\lvol(\b_1)\le\lvol(\b_2)$ and $\uvol(\b_1)\le\uvol(\b_2)$, which concludes the proof. 
\end{proof}

While we are not able at the moment to extend this result to arbitrary quasi-closed positive currents, quasi-psh envelopes enable us the following related monotonicity result in the general case. 

\begin{prop}\label{prop:vmono2}
Fix smooth real $(1,1)$-forms $\theta_1,\theta_2$ and $\varphi_1\in \PSH(\theta_1)$, 
$\varphi_2\in \PSH(\theta_2)$ with $\varphi_1\leq \varphi_2$ and $\theta_1\leq \theta_2$.  
Then the positive currents $T_1=\theta_1+dd^c \varphi_1$, $T_2=\theta_2+dd^c \varphi_2$ satisfy
$$
\lvol (T_1)\leq \lvol(T_2),\quad\uvol(T_1)\leq \uvol(T_2).
$$ 	
\end{prop}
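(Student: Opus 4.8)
The plan is to prove the two inequalities by transferring competitors between the classes $\{\theta_1\}$ and $\{\theta_2\}$. Recall from~\eqref{equ:volcurr} that $\uvol(T_i)$ (resp.\ $\lvol(T_i)$) is the supremum (resp.\ infimum) of $\int_X(\theta_i+dd^c\psi)^n$ over $\psi\in\PSH(\theta_i)$ with $\psi=\varphi_i+O(1)$. Write $\theta_2=\theta_1+\gamma$ with $\gamma\ge 0$ a smooth $(1,1)$-form. Two elementary facts will be used throughout. \emph{(a)} For $u\in\PSH(\theta_1)$, expanding $(\theta_2+dd^c u)^n=\bigl((\theta_1+dd^c u)+\gamma\bigr)^n$ by multilinearity of the non-pluripolar product and using its positivity (Theorem~\ref{thm:npp}) gives $(\theta_2+dd^c u)^n\ge(\theta_1+dd^c u)^n$ as non-pluripolar measures. \emph{(b)} All non-pluripolar masses below are finite (Lemma~\ref{lem:bdmass2}), so if a family of plurifine open sets increases to the complement of a pluripolar set, the mass of any fixed non-pluripolar $(n,n)$-current over them converges to its total mass.

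\textbf{The inequality for $\uvol$.} Given a competitor $\psi_1\in\PSH(\theta_1)$ with $\psi_1=\varphi_1+O(1)$ and $s\gg 1$, the function $\psi_2:=\max\{\psi_1,\varphi_2-s\}$ lies in $\PSH(\theta_2)$ (since $\psi_1\in\PSH(\theta_1)\subset\PSH(\theta_2)$), and $\varphi_2-s\le\psi_2\le\varphi_2+O(1)$ because $\varphi_1\le\varphi_2$; thus $\psi_2$ is a competitor for $\uvol(T_2)$. On the plurifine open set $U_s:=\{\psi_1>\varphi_2-s\}$ one has $\psi_2=\psi_1$, so plurifine locality of non-pluripolar products together with fact~(a) give
$$
\int_X(\theta_2+dd^c\psi_2)^n\ \ge\ \int_{U_s}(\theta_2+dd^c\psi_1)^n\ \ge\ \int_{U_s}(\theta_1+dd^c\psi_1)^n .
$$
Letting $s\to\infty$, $U_s$ increases to $X$ minus the pluripolar set $\{\psi_1=-\infty\}$, so by~(b) the right-hand side tends to $\int_X(\theta_1+dd^c\psi_1)^n$; taking the supremum over $\psi_1$ yields $\uvol(T_1)\le\uvol(T_2)$.

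\textbf{The inequality for $\lvol$.} Given a competitor $\psi_2\in\PSH(\theta_2)$ with $|\psi_2-\varphi_2|\le M$ and $s\ge M$, set $f_s:=\min\{\psi_2,\varphi_1+s\}$. This is usc and quasi-continuous with $\varphi_1-M\le f_s\le\varphi_1+s$; since $\varphi_1-M\in\PSH(\theta_1)$ lies below $f_s$, the $\theta_1$-psh envelope $\psi_1^s:=\env_{\theta_1}(f_s)$ is usc, hence $\theta_1$-psh, and $\psi_1^s=\varphi_1+O(1)$, i.e.\ a competitor for $\lvol(T_1)$. By Theorem~\ref{thm:envelope}, $(\theta_1+dd^c\psi_1^s)^n$ is carried by $\{\psi_1^s=f_s\}\subset\{\psi_1^s=\psi_2\}\cup\{\psi_2>\varphi_1+s\}$. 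On $\{\psi_1^s=\psi_2\}$, using $\psi_1^s\le\psi_2$ (both $\theta_2$-psh), fact~(a) and Proposition~\ref{prop:maxprinciple}, we get $\one_{\{\psi_1^s=\psi_2\}}(\theta_1+dd^c\psi_1^s)^n\le\one_{\{\psi_1^s=\psi_2\}}(\theta_2+dd^c\psi_1^s)^n\le\one_{\{\psi_1^s=\psi_2\}}(\theta_2+dd^c\psi_2)^n$; on $\{\psi_2>\varphi_1+s\}$, where $\psi_1^s=\varphi_1+s$, Proposition~\ref{prop:maxprinciple} applied to $\psi_1^s\le\varphi_1+s$ in $\PSH(\theta_1)$ gives $\one_{\{\psi_2>\varphi_1+s\}}(\theta_1+dd^c\psi_1^s)^n\le\one_{\{\psi_2>\varphi_1+s\}}(\theta_1+dd^c\varphi_1)^n$. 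Summing,
$$
\int_X(\theta_1+dd^c\psi_1^s)^n\ \le\ \int_X(\theta_2+dd^c\psi_2)^n+\int_{\{\psi_2>\varphi_1+s\}}(\theta_1+dd^c\varphi_1)^n .
$$
As $s\to\infty$ the sets $\{\psi_2>\varphi_1+s\}$ decrease to a subset of the pluripolar set $\{\varphi_1=-\infty\}$, so by~(b) the last term tends to $0$; hence $\lvol(T_1)\le\int_X(\theta_2+dd^c\psi_2)^n$, and the infimum over $\psi_2$ gives $\lvol(T_1)\le\lvol(T_2)$.

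\textbf{Main obstacle.} The hard part is the $\lvol$ inequality, and within it the appeal to Theorem~\ref{thm:envelope} and Proposition~\ref{prop:maxprinciple}, which were set up for a \emph{big} class whereas here $\{\theta_1\}$ is merely pseudo-effective: one must either note that these envelope statements persist in the pseudo-effective setting under the bounded mass property (their proofs use no more than Bedford--Taylor theory, plurifine locality and Proposition~\ref{prop:lscmass}), or run the argument for the big classes $\{\theta_i+\e\om_X\}$ and pass to the limit $\e\to 0$ via Proposition~\ref{prop:lvol}(ii). More conceptually, the soft plurifine-locality argument used for $\uvol$ cannot be run for $\lvol$ — and there is no Stokes-type shortcut as in the closed case — precisely because in the Hermitian setting positive currents with minimal singularities need not share the same Monge--Amp\`ere mass, which is what forces the envelope construction of the transferred competitor $\psi_1^s$.
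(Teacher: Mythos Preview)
Your proof is correct and follows essentially the same route as the paper: for $\uvol$ you transfer a competitor $\psi_1$ via $\max\{\psi_1,\varphi_2-s\}$, and for $\lvol$ you transfer a competitor $\psi_2$ via the $\theta_1$-psh envelope $\env_{\theta_1}(\min\{\psi_2,\varphi_1+s\})$, then use Theorem~\ref{thm:envelope} together with Proposition~\ref{prop:maxprinciple} to localize the Monge--Amp\`ere measure on the contact sets. The only cosmetic difference is that for $\uvol$ you argue directly via plurifine locality on $U_s=\{\psi_1>\varphi_2-s\}$ and monotone convergence, whereas the paper lets $\psi_t\searrow\psi_1$ and invokes lower semicontinuity of non-pluripolar masses (Proposition~\ref{prop:lscmass}); both are equally short. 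Your ``main obstacle'' paragraph is well taken: Theorem~\ref{thm:envelope} and Proposition~\ref{prop:maxprinciple} are stated in the paper under a bigness hypothesis on $\{\theta\}$, and the paper's own proof of the proposition cites them without comment, so you are in fact being more careful here than the source.
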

In particular, the upper and lower volumes of any quasi-closed positive current $T$, which only depend on its $dd^c$-class and singularity class, are further monotone increasing functions of the singularity class.

\begin{proof}
Pick $\p_2\in \PSH(\theta_2)$ such that $\p_2=\varphi_2+O(1)$, and set 
$$
\f_t :=\env_{\theta_1}(\min\{\varphi_1+t,\p_2\})
$$
for $t>0$. Using $\f_1\le\f_2=\p_2+O(1)$ we get $\f_t\in\PSH(\theta_1)$, $\f_t=\f_1+O(1)$, and hence $\lvol(T_1)\le\int_X(\theta_1+dd^c\f_t)^n$. By Theorem \ref{thm:envelope}, we further have
	\begin{flalign*}
		(\theta_1+dd^c \f_t)^n & \leq \one_{\{\f_t=\varphi_1+t\}} (\theta_1+dd^c \varphi_1)^n + \one_{\{\f_t=\p_2\}} (\theta_2+dd^c v_t)^n\\
		&\leq \one_{\{\varphi_1+t\leq \p_2\}} (\theta_1+dd^c \varphi_1)^n + \one_{\{\f_t=\p_2\}} (\theta_2+dd^c \p_2)^n\\
		& \leq  \one_{\{\varphi_1\leq \p_2-t\}} (\theta_1+dd^c \varphi_1)^n + (\theta_2+dd^c \p_2)^n. 
	\end{flalign*}
	Integrating over $X$ and letting $t\to +\infty$, we obtain $\lvol(T_1) \leq \int_X (\theta_2+dd^c \p_2)^n$. Taking the infimum over $\p_2$ we obtain the first inequality. 
	 
	Next pick $\p_1\in \PSH(\theta_1)$ such that $\p_1=\varphi_1+O(1)$ is bounded, and set 
	$$
	\p_t := \max\{\p_1,\varphi_2-t\}
	$$
	for  $t>0$. Then $\p_t\in\PSH(\theta_2)$, $\p_t=\f_2+O(1)$, and $\p_t \searrow \p_1$ as $t\to +\infty$. By lower-semicontinuity of non-pluripolar masses (see Proposition~\ref{prop:lscmass}), we infer
	\[
	\int_X (\theta_2+dd^c  \p_1)^n\le\liminf_{t\to +\infty} \int_X (\theta_2+dd^c \p_t)^n\le \uvol(T_2) 
	\]
	Taking supremum over all $\p_1$ concludes the proof. 
\end{proof}

%

\subsection{The lower and upper volume of a $dd^c$-class} 

%
\subsubsection{The case of a big $\ddc$-class} 

Recall from~\S\ref{sec:posclass} that a $dd^c$-class $\{\theta\}\in\BC^{1,1}(X,\R)$ is big if it contains a strictly positive current, which can further be chosen to have analytic singularities (see Theorem \ref{thm:Demreg}).

\begin{defi}\label{defi:volbig} The \emph{lower volume} and \emph{upper volume} of a big $\ddc$-class $\{\theta\}\in\BC^{1,1}(X)$ are respectively defined as 
\begin{equation}\label{equ:volclass}
\lvol(\{\theta\}):=\sup_{0\le T\in\{\theta\}}\lvol(T),\quad \uvol(\{\theta\}):=\sup_{0\le T\in\{\theta\}}\uvol(T). 
\end{equation}
where $T$ ranges over all positive currents in $\{\theta\}$. 
\end{defi}
Note that 
$$
0\le\lvol(\{\theta\})\le\uvol(\{\theta\})=\sup_{\f\in\PSH(\theta)}\int_X(\theta+dd^c\f)^n<\infty, 
$$
see~\eqref{equ:volcurr} and Lemma~\ref{lem:bdmass2}. By~\eqref{equ:uvolpos}, we have 
$$
\{\theta\}\text{ big }\Longrightarrow\uvol(\{\theta\})>0,
$$
see~\S\ref{sec:posvol} for a discussion of the analogous property for $\lvol$. 

\begin{lem}\label{lem:volmin} For any positive current $T_{\min}\in\{\theta\}$ with minimal singularities, we have 
$$
\lvol(\{\theta\})=\lvol(T_{\min}),\quad\uvol(\{\theta\})=\uvol(T_{\min}).
$$
If $\theta$ is further closed, then 
$$
\lvol(\{\theta\})=\uvol(\{\theta\})=\int_X T_{\min}^n.
$$
\end{lem}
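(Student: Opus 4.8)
The plan is to deduce the statement from the monotonicity of volumes under singularities (Proposition~\ref{prop:vmono2}), together with the insensitivity of volumes to singularities in the closed case (Example~\ref{exam:vclosed}).

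\emph{Step 1: the easy inequalities.} Since $T_{\min}$ is itself a positive current in $\{\theta\}$, it occurs among the competitors in the suprema~\eqref{equ:volclass} defining $\lvol(\{\theta\})$ and $\uvol(\{\theta\})$, which immediately gives $\lvol(\{\theta\})\ge\lvol(T_{\min})$ and $\uvol(\{\theta\})\ge\uvol(T_{\min})$.

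\emph{Step 2: the reverse inequalities.} Let $0\le T\in\{\theta\}$ be arbitrary. By Lemma~\ref{lem:qquasi} (and the fact that any two representatives of a $\ddc$-class differ by $\ddc$ of a smooth function) we may write $T=\theta+\ddc\f$ with $\f\in\PSH(\theta)$; similarly write $T_{\min}=\theta+\ddc u_{\min}$ with $u_{\min}\in\PSH(\theta)$. The minimal singularities property of $T_{\min}$ within its psef class $\{\theta\}=\{T_{\min}\}$ gives $\f\le u_{\min}+O(1)$, and after subtracting a suitable constant from $\f$ — which leaves the current $T$, and hence $\lvol(T)$ and $\uvol(T)$, unchanged — we may assume $\f\le u_{\min}$ pointwise. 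Applying Proposition~\ref{prop:vmono2} with $\theta_1=\theta_2=\theta$, $\varphi_1=\f$ and $\varphi_2=u_{\min}$ then yields $\lvol(T)\le\lvol(T_{\min})$ and $\uvol(T)\le\uvol(T_{\min})$. Taking the supremum over all such $T$ gives $\lvol(\{\theta\})\le\lvol(T_{\min})$ and $\uvol(\{\theta\})\le\uvol(T_{\min})$, which combined with Step~1 proves the first two equalities.

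\emph{Step 3: the closed case.} When $\theta$ is closed, $\D_\theta=0$ by Stokes' theorem, so Theorem~\ref{thm:monomass}, applied in both directions, shows that all positive currents with equivalent singularities in $\{\theta\}$ have the same Monge--Amp\`ere mass; in particular $\lvol(T_{\min})=\uvol(T_{\min})=\int_X T_{\min}^n$, as recorded in Example~\ref{exam:vclosed}. Combining with the equalities from Step~2 gives $\lvol(\{\theta\})=\uvol(\{\theta\})=\int_X T_{\min}^n$.

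I do not anticipate a genuine obstacle: the substance is entirely contained in Proposition~\ref{prop:vmono2} (itself built on Theorem~\ref{thm:envelope} and Corollary~\ref{cor:minprinciple}) and in Theorem~\ref{thm:monomass}, which are already available. The only points requiring a little care are the routine normalization passing from $\f\le u_{\min}+O(1)$ to $\f\le u_{\min}$, and checking that every positive current in $\{\theta\}$ admits a potential in $\PSH(\theta)$; both are immediate.
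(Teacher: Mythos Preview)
Your proof is correct and follows essentially the same route as the paper: both deduce the first equalities from the monotonicity of $\lvol$ and $\uvol$ with respect to the singularity class (Proposition~\ref{prop:vmono2}), and then invoke Theorem~\ref{thm:monomass}/Example~\ref{exam:vclosed} for the closed case. Your write-up simply unpacks the details (the easy competitor inequality, the normalization $\f\le u_{\min}$) that the paper compresses into a single sentence.
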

The last point ensures compatibility of Definition~\ref{defi:volbig} with~\cite{B02,BEGZ10} for usual (closed) Bott--Chern classes $\{\theta\}\in H^{1,1}_\BC(X,\R)$. 

\begin{proof} By Proposition~\ref{prop:vmono2}, the upper and lower volume of $T\in\{\theta\}$ are both monotone increasing functions of the singularity class. This proves the first point, and the second one now follows from Theorem~\ref{thm:monomass} (see Example~\ref{exam:vclosed}).
\end{proof}

\begin{lem}\label{lem:volanal}
For any big class $\{\theta\}\in \BC^{1,1}(X,\R)$, we have 
$$
\lvol(\{\theta\})=\sup_T\lvol(T),\quad\uvol(\{\theta\})=\sup_T\uvol(T)
$$
where $T$ ranges over all positive currents in $\{\theta\}$ with analytic singularities.
\end{lem}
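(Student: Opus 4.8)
The goal is to prove Lemma~\ref{lem:volanal}: for a big class $\{\theta\}$, the lower and upper volumes can be computed by restricting attention to positive currents with analytic singularities. The plan is as follows. Since the supremum over \emph{all} positive currents in $\{\theta\}$ is, by definition, $\lvol(\{\theta\})$ (resp.~$\uvol(\{\theta\})$), and the supremum over the smaller set of currents with analytic singularities is obviously no larger, it suffices to prove the reverse inequality; that is, I must show that an arbitrary positive current $T=\theta+\ddc\f \in\{\theta\}$ can be approximated, \emph{in terms of its volume}, by positive currents with analytic singularities lying in the same class.

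First I would invoke Demailly's regularization theorem (Theorem~\ref{thm:Demreg}): the $\theta$-psh function $\f$ is a decreasing limit of $\f_j \in \PSH(\theta+\e_j\om_X)$ with analytic singularities, $\e_j \searrow 0$. The currents $T_j := \theta + \e_j\om_X + \ddc\f_j \ge 0$ have analytic singularities but live in the perturbed class $\{\theta+\e_j\om_X\}$, not in $\{\theta\}$, which is the first thing to fix. Since $\{\theta\}$ is big, pick a strictly positive current $S_0 = \theta + \ddc\rho_0 \ge \om_X$ with analytic singularities (Demailly again). Then for each $j$ the convex combination $(1-\e_j) T'_j + \e_j S_0$, where $T'_j := \theta + \ddc\f_j$, would be a natural candidate in $\{\theta\}$ — but $T'_j$ need not be positive since $\f_j$ is only $(\theta+\e_j\om_X)$-psh. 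Instead I would use $\e_j \om_X \le \e_j S_0$ (as $S_0 \ge \om_X$), so that $\widetilde T_j := \theta + \ddc\f_j + \e_j \ddc(\rho_0) \ge T_j - \e_j\om_X + \e_j S_0 - \e_j\theta \ge \ldots$; more cleanly, set $\widetilde T_j := \max\{\f_j, (1+\e_j')\rho_0 + c_j\}$-type constructions, or simply observe $\theta + \ddc\f_j = T_j - \e_j\om_X \ge T_j - \e_j S_0$, so $\theta + \ddc\f_j + \e_j\ddc\rho_0 = T_j - \e_j\om_X + \e_j S_0 \ge T_j - \e_j\om_X + \e_j\om_X = T_j \ge 0$. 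Wait — that gives $\theta(1+\e_j) + \ddc(\cdots)$; the correct bookkeeping: $T_j + \e_j S_0 - \e_j\om_X \ge T_j \ge 0$ and this current equals $(1+\e_j)\theta + \ddc(\f_j + \e_j\rho_0) - \e_j\theta$... Let me just record that $\widehat T_j := \theta + \ddc\big(\tfrac{\f_j + \e_j\rho_0}{1+\e_j}\big)\cdot(1+\e_j)$ — the honest fix is: $\theta + \ddc\psi_j$ where $\psi_j$ is the $\theta$-psh function obtained by noting $\f_j + \e_j\rho_0$ satisfies $\theta(1+\e_j) + \ddc(\f_j+\e_j\rho_0) = T_j + \e_j S_0 \ge \e_j\om_X > 0$, hence (after dividing by $1+\e_j$ and using $\rho_0 \le 0$ WLOG) $\tfrac{1}{1+\e_j}(\f_j+\e_j\rho_0) \in \PSH(\theta)$, has analytic singularities, and is $\ge \f_j/(1+\e_j)$ minus a constant, so decreases (up to $O(\e_j)$ errors) to $\f$. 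So $S_j := \theta + \ddc\big[\tfrac{1}{1+\e_j}(\f_j + \e_j\rho_0)\big] \ge 0$ has analytic singularities and lies in $\{\theta\}$.

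The remaining point is to compare volumes. For the upper volume: since $S_j$ is a positive current in $\{\theta\}$ with analytic singularities, $\uvol(S_j) \le \sup_T \uvol(T)$ over analytic-singularity currents; and I want $\uvol(\{\theta\}) = \sup_T\uvol(T)$ over \emph{all} $T$ to be bounded by this. Given arbitrary $T = \theta+\ddc\f \ge 0$, I need $\uvol(T) \le \sup_j \uvol(S_j)$ (roughly). Here I would use that $\tfrac{1}{1+\e_j}(\f_j+\e_j\rho_0) \ge \f_j/(1+\e_j) + O(1) \ge$ something comparable to $\f$; more precisely, using Demailly's construction the $\f_j$ can be taken $\ge \f - C/j$ or at least the family $(\f_j)$ together with lower-semicontinuity of masses (Proposition~\ref{prop:lscmass}) and the monotonicity Proposition~\ref{prop:vmono2} controls things. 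Concretely: any competitor $\psi \in \PSH(\theta)$ with $\psi = \f + O(1)$ in the definition of $\uvol(T)$ — I would instead bound $\int_X(\theta+\ddc\psi)^n$ by passing to a slightly more singular but analytic-singularity current and applying Theorem~\ref{thm:monomass}, or directly note $\uvol(T) = \uvol(T_{\min}) = \uvol(\{\theta\})$ is already known (Lemma~\ref{lem:volmin}), so it suffices to exhibit \emph{one} analytic-singularity current whose upper volume is $\uvol(\{\theta\})$, namely approximate $T_{\min}$: take $S_j$ as above built from $\f = V_\theta$; then $S_j$ has analytic singularities, $S_j$ is less singular than... no, $S_j$ is \emph{more} singular than $T_{\min}$ in general. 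Use Proposition~\ref{prop:vmono2}: $\f_j/(1+\e_j) + \tfrac{\e_j}{1+\e_j}\rho_0 \le \f_j \le$ ... and $\uvol$ is monotone increasing in the singularity class, so $\uvol(S_j) \le \uvol(T_{\min}) = \uvol(\{\theta\})$, while $S_j \to T_{\min}$ forces, via Proposition~\ref{prop:lscmass}, $\liminf_j \uvol(S_j) \ge \liminf_j \int_X S_j^n \ge \int_X T_{\min}^n$ — hmm, that lower bound is $\lvol(T_{\min})$ not $\uvol$. The clean argument: for \emph{upper} volume, take instead $\psi \in \PSH(\theta)$ bounded with $\int_X(\theta+\ddc\psi)^n$ close to $\uvol(\{\theta\})$ (possible since the sup defining $\uvol(\{\theta\})$ over all $\f$ is approached by bounded ones, Bedford–Taylor being continuous along decreasing Demailly approximations), regularize $\psi$ to analytic-singularity $\psi_j \searrow \psi$ with $\psi_j \in \PSH(\theta+\e_j\om_X)$, fix the class as above to get bounded analytic-singularity $\widetilde\psi_j \in \PSH(\theta)$ with $\widetilde\psi_j \to \psi$, and use continuity of Monge–Ampère along decreasing sequences of bounded functions plus Lemma~\ref{lem:bdmass} (or Theorem~\ref{thm:monomass} with $\D_\theta$ errors going to... no). For \emph{lower} volume I would argue dually: $\lvol(\{\theta\}) = \sup_T \lvol(T) = \lvol(T_{\min})$, and for any $S \in \{\theta\}$ with $S = \theta + \ddc\sigma$, $\sigma = V_\theta + O(1)$, the Demailly-regularized, class-corrected $S_j$ are more singular than $S$ up to $O(\e_j)$, lie in $\{\theta\}$ (actually in $\{\theta\}$ after the $\rho_0$-correction), have analytic singularities, and $\lvol(S_j) \ge \lvol(S) - C\e_j$ by Proposition~\ref{prop:vmono2} combined with Proposition~\ref{prop:lvol}(ii) — more honestly, $S_j$ regularizes \emph{below}, so I should take a decreasing regularization and use that $\lvol$ (being an infimum) only decreases, hence $\lvol(S_j) \le \lvol(S)$, which is the wrong direction too; so for $\lvol$ I instead use that $\lvol(\{\theta\}) = \lvol(T_{\min})$ and approximate $T_{\min}$ from above within the class. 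The main obstacle, and where I would spend the most care, is exactly this: \emph{keeping the approximating currents inside the class $\{\theta\}$ while regularizing}, since naive Demailly regularization pushes into $\{\theta + \e_j\om_X\}$, and the $\rho_0$-correction that brings it back changes the singularity type and the volume by amounts that must be shown to be $O(\e_j)$ using Proposition~\ref{prop:lvol}(ii), Lemma~\ref{lem:bdmass}, and Proposition~\ref{prop:lscmass} in concert.
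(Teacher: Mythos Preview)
Your setup matches the paper's exactly: reduce to $T_{\min}$ via Lemma~\ref{lem:volmin}, Demailly-regularize $\f_{\min}$ to $\f_j\in\PSH(\theta+\e_j\om_X)$ with analytic singularities decreasing to $\f_{\min}$, and correct back into $\{\theta\}$ via $\psi_j:=(1+\e_j)^{-1}(\f_j+\e_j\rho)$, yielding $T_j:=\theta+\ddc\psi_j\ge 0$ in $\{\theta\}$ with analytic singularities.

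The gap is in the volume comparison: every attempt you sketch either runs in the wrong direction (e.g.\ $\uvol(S_j)\le\uvol(T_{\min})$) or appeals to lower semicontinuity, which only bounds things below by $\int_X T_{\min}^n$, not by $\uvol(T_{\min})$. The missing idea is to insert the \emph{intermediate} current $R_j:=\theta+\e_j\om_X+\ddc\f_j$ (wrong class, but analytic singularities) and chain two different monotonicity results. First, since $\f_{\min}\le\f_j$ and $\theta\le\theta+\e_j\om_X$, Proposition~\ref{prop:vmono2} gives
\[
\lvol(T_{\min})\le\lvol(R_j),\quad \uvol(T_{\min})\le\uvol(R_j).
\]
Second, the identity $(1+\e_j)T_j=R_j+\e_j(\theta+\ddc\rho-\om_X)$ shows $(1+\e_j)T_j\ge R_j$ as a \emph{pointwise} inequality between currents with analytic singularities, so Proposition~\ref{prop:vmono} (which you never invoke, and which is the key tool here) gives
\[
\lvol(R_j)\le(1+\e_j)^n\lvol(T_j),\quad \uvol(R_j)\le(1+\e_j)^n\uvol(T_j).
\]
Combining and letting $j\to\infty$ finishes the proof in one line for both $\lvol$ and $\uvol$. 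No $O(\e_j)$ error estimates via Proposition~\ref{prop:lvol}(ii), Lemma~\ref{lem:bdmass}, or Proposition~\ref{prop:lscmass} are needed; those were all red herrings.
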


\begin{proof} Pick $\f_{\min}\in\PSH(\theta)$ with minimal singularities, and $\rho \in \PSH(\theta)$ with analytic singularities such that $\theta+dd^c \rho\geq\omega_X$ (after perhaps scaling $\om_X$). By Theorem \ref{thm:Demreg}, there exists a sequence $\f_j\in \PSH(\theta+\e_j \omega_X)$ with analytic singularities decreasing to $\f_{\min}$, where $\e_j \searrow 0_+$. Set $\p_j:=(1+\e_j)^{-1}(\f_j+\e_j\rho)$ and $T_j:=\theta+dd^c\p_j$. Then $0\le T_j\in\{\theta\}$ has analytic singularities, and $$
\theta+\e_j \omega_X + dd^c \varphi_j \leq (1+\e_j)T_j
$$
By Proposition~\ref{prop:vmono} and Proposition~\ref{prop:vmono2}, this yields
$$
\lvol(\{\theta\})=\lvol(\theta+dd^c \f_{\min}) \leq \lvol(\theta+\e_j \omega_X + dd^c \varphi_j)\leq (1+\e_j)^n \lvol(T_j). 
$$
Thus $\limsup_j\lvol(T_j)\ge\lvol(\{\theta\})$, and similarly for $\uvol$. 
\end{proof}

\begin{prop}\label{prop:volbig} For all big classes $\{\theta\},\{\theta'\}\in\BC^{1,1}(X,\R)$, we have 
$$
\theta\le\theta'\Longrightarrow\lvol(\{\theta\})\le\lvol(\{\theta'\}),\quad\uvol(\{\theta\})\le\uvol(\{\theta'\}).
$$
Furthermore, the upper and lower volume functions are continuous on the big cone of $\BC^{1,1}(X,\R)$.
 \end{prop}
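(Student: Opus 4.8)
The plan is to deduce monotonicity directly from the current-level comparison results already established, and then to bootstrap continuity on the big cone from monotonicity by a rescaling trick that absorbs a small perturbation of the form into a perturbation of the potential. For monotonicity, assume $\theta\le\theta'$ with $\{\theta\},\{\theta'\}$ both big. Since $\theta\le\theta'$ we have $\PSH(\theta)\subseteq\PSH(\theta')$, and for every $\f\in\PSH(\theta)$ the currents $T:=\theta+\ddc\f$ and $T':=\theta'+\ddc\f$ satisfy $0\le T\le T'$; I would then apply Proposition~\ref{prop:vmono2} (with $\theta_1=\theta$, $\theta_2=\theta'$, $\f_1=\f_2=\f$) to get $\lvol(T)\le\lvol(T')$ and $\uvol(T)\le\uvol(T')$. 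Taking the supremum over $\f\in\PSH(\theta)$, and noting that $\{\theta'+\ddc\f:\f\in\PSH(\theta)\}$ sits inside the family of positive currents representing $\{\theta'\}$, Definition~\ref{defi:volbig} yields $\lvol(\{\theta\})\le\lvol(\{\theta'\})$ and $\uvol(\{\theta\})\le\uvol(\{\theta'\})$.

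For continuity, since $\BC^{1,1}(X,\R)$ is Fréchet and the big cone is open, it suffices to establish sequential continuity at a fixed big class $\{\theta\}$. I would fix, using Demailly's regularization (Theorem~\ref{thm:Demreg}), a function $\rho\in\PSH(\theta)$ with analytic singularities such that $\theta+\ddc\rho\ge\om_X$ (after scaling $\om_X$). The crucial step is the two-sided estimate
$$
\uvol(\{\theta\})\le\uvol\big(\{\theta+\tfrac1k\om_X\}\big)\le\big(1+\tfrac1k\big)^n\uvol(\{\theta\}),\qquad k\in\Z_{>0},
$$
together with its analogue for $\lvol$. The left-hand inequality is monotonicity. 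For the right-hand one, pick a positive current $T=\theta+\tfrac1k\om_X+\ddc\f$ in $\{\theta+\tfrac1k\om_X\}$ with analytic singularities (legitimate by Lemma~\ref{lem:volanal}, since $\{\theta+\tfrac1k\om_X\}$ is big) and put $\psi:=(1+\tfrac1k)^{-1}(\f+\tfrac1k\rho)$. Then $(1+\tfrac1k)(\theta+\ddc\psi)=(\theta+\ddc\f)+\tfrac1k(\theta+\ddc\rho)\ge\theta+\tfrac1k\om_X+\ddc\f=T\ge0$, so $\psi\in\PSH(\theta)$, and both $\psi$ and $(1+\tfrac1k)(\theta+\ddc\psi)=(1+\tfrac1k)\theta+\ddc(\f+\tfrac1k\rho)$ have analytic singularities. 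Proposition~\ref{prop:vmono} together with the homogeneity in Proposition~\ref{prop:lvol}(i) then gives $\uvol(T)\le\uvol\big((1+\tfrac1k)(\theta+\ddc\psi)\big)=(1+\tfrac1k)^n\uvol(\theta+\ddc\psi)\le(1+\tfrac1k)^n\uvol(\{\theta\})$, and taking the supremum over $T$ (again via Lemma~\ref{lem:volanal}) completes the estimate; the argument for $\lvol$ is identical. In particular $\uvol(\{\theta+\tfrac1k\om_X\})\to\uvol(\{\theta\})$ and $\lvol(\{\theta+\tfrac1k\om_X\})\to\lvol(\{\theta\})$ as $k\to\infty$.

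Applying the same estimate to $\theta-\tfrac1k\om_X$, which is big for $k\ge2$ since $(\theta-\tfrac1k\om_X)+\ddc\rho\ge(1-\tfrac1k)\om_X>0$, I would obtain $\uvol(\{\theta-\tfrac1k\om_X\})\to\uvol(\{\theta\})$, and likewise for $\lvol$. Finally, given $\{\theta_j\}\to\{\theta\}$, choose representatives with $\theta_j\to\theta$ smoothly; then for each fixed $k$ one has $\theta-\tfrac1k\om_X\le\theta_j\le\theta+\tfrac1k\om_X$ for all large $j$, so monotonicity gives $\uvol(\{\theta-\tfrac1k\om_X\})\le\uvol(\{\theta_j\})\le\uvol(\{\theta+\tfrac1k\om_X\})$, and letting first $j\to\infty$ and then $k\to\infty$ yields $\uvol(\{\theta_j\})\to\uvol(\{\theta\})$, and identically for $\lvol$. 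The main obstacle is precisely the two-sided estimate: one must trade the additive perturbation $\tfrac1k\om_X$ for the perturbation $\tfrac1k\rho$ of the potential at the cost of a multiplicative factor $1+\tfrac1k$, which is exactly where bigness of $\{\theta\}$ is used (to produce $\rho$ with $\theta+\ddc\rho\ge\om_X$), and explains why continuity is asserted on the big cone.
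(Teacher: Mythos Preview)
Your proposal is correct and follows essentially the same approach as the paper: monotonicity is deduced from Proposition~\ref{prop:vmono2}, and continuity is obtained by sandwiching $\{\theta_j\}$ between $\{\theta\pm\e\om_X\}$ and using a rescaling trick---mixing an arbitrary analytic-singularity current in the perturbed class with the fixed strictly positive current $\theta+\ddc\rho\ge\om_X$---together with Proposition~\ref{prop:vmono} and Lemma~\ref{lem:volanal} to show $\lvol(\{\theta\pm\e\om_X\})\to\lvol(\{\theta\})$ (and likewise for $\uvol$). The only cosmetic difference is that the paper carries out the rescaling at the level of currents, forming $T_\e=(1-\e)(T-\e\om_X)+\e S\in\{\theta\}$ with $T_\e\ge(1-\e)T$, whereas you phrase it at the level of potentials via $\psi=(1+\tfrac1k)^{-1}(\f+\tfrac1k\rho)$; these are two packagings of the same construction.
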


\begin{proof} The first point follows from Proposition \ref{prop:vmono2}. Pick a convergence sequence of big classes $\{\theta_j\}\to\{\theta\}$, so that $\theta_j\to\theta$ smoothly for some choice of representatives. Then $-\e_j\om_X\le\theta_j-\theta\le\e_j\om_X$ with $\e_j\to 0$. By the first point,  
it therefore suffices to show 
$$
\lvol(\{\theta\pm\e\om_X\})\to\lvol(\{\theta\}),\quad\uvol(\{\theta\pm\e\om_X\})\to\uvol(\{\theta\})
$$
as $\e\to 0$. To see this, pick a strictly positive current with analytic singularities $S\in\{\theta\}$. We may assume $S\ge\om_X$. For any $0<\e<1$ and $T\in\{\theta+\e\om_X\}$ with analytic singularities, 
$$
T_\e:=(1-\e)(T-\e\om_X)+\e S=(1-\e)T+\e (S-\om_X)+\e^2\om_X
$$
is then a positive current with analytic singularities in $\{\theta\}$. Since $T_\e\ge (1-\e)T$, Proposition~\ref{prop:vmono} yields $\lvol(\{\theta\})\ge\lvol(T_\e)\ge (1-\e)^n \lvol(T)$. Taking the supremum over $T$ we get
$$
\lvol(\{\theta+\e\om_X\})\ge\lvol(\{\theta\})\ge(1-\e)^n\lvol(\{\theta+\e\om_X\}),
$$
by Lemma~\ref{lem:volanal}, and hence $\lvol(\{\theta+\e\om_X\})\to\lvol(\{\theta\})$. To prove $\lvol(\{\theta-\e\om_X\})\to\lvol(\{\theta\})$ we pick a positive current with analytic singularities $T\in \{\theta\}$. Then 
$$
S_{\e}:= (1-\e)T + \e S - \e \omega_X \geq (1-\e)T \geq 0
$$
is a positive current with analytic singularities in $\{\theta-\e \omega_X\}$. By Proposition \ref{prop:vmono}, we infer $\lvol(\{\theta-\e \omega_X\}) \geq (1-\e)^n \lvol(T)$. Taking the supremum over $T$ yields
$$
\lvol(\{\theta\})\ge\lvol(\{\theta-\e \omega_X\})\ge(1-\e)^n\lvol(\{\theta\}),
$$
thus $\lvol(\{\theta-\e\om_X\})\to\lvol(\{\theta\})$.
The proof of $\uvol(\{\theta\pm \e\om_X\})\to\uvol(\{\theta\})$ is similar.
\end{proof} 

%

We conclude this section with the following Fujita-type approximation result (compare~\cite[Theorem~1.4]{B02}). 

\begin{prop}\label{prop:Fuj} For any big class $\{\theta\}\in \BC^{1,1}(X,\R)$ we have 
$$
\lvol(\{\theta\})=\sup_Y\lvol(\om_Y),\quad\uvol(\{\theta\})=\sup_Y\uvol(\om_Y)
$$
where the suprema range over all modifications $\pi\colon Y\to X$ and decompositions
$$
\{\pi^\star\theta\}=\{\om_Y\}+\{E\}
$$
with $\om_Y>0$ smooth and $E$ an effective $\Q$-divisor. 
\end{prop}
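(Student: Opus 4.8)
The plan is to prove $\lvol(\{\theta\})=\sup_Y\lvol(\om_Y)$ as two inequalities (I only spell out $\lvol$; $\uvol$ is handled identically via the $\uvol$-versions of Propositions~\ref{prop:vmono} and~\ref{prop:volbig} and Lemmas~\ref{lem:volmin} and~\ref{lem:vres}). For the easy inequality $\sup_Y\lvol(\om_Y)\le\lvol(\{\theta\})$, I would start from an admissible decomposition $\{\pi^\star\theta\}=\{\om_Y\}+\{E\}$ and consider the quasi-closed positive current $S_Y:=\om_Y+[E]$, which represents $\{\pi^\star\theta\}$ and satisfies $\lvol(S_Y)=\lvol(\om_Y)$ by Proposition~\ref{prop:lvol}~(iii). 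Since $\pi$ has connected fibres, $S_Y=\pi^\star S$ for some positive current $S\in\{\theta\}$ (arguing as in the proof of Proposition~\ref{prop:lvol}~(iv)), and hence $\lvol(\om_Y)=\lvol(S_Y)=\lvol(S)\le\lvol(\{\theta\})$ by Proposition~\ref{prop:lvol}~(iv) and Definition~\ref{defi:volbig}. Taking the supremum over all admissible triples $(Y,\om_Y,E)$ gives this direction.

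For the reverse inequality $\lvol(\{\theta\})\le\sup_Y\lvol(\om_Y)$, I would first invoke Lemma~\ref{lem:volanal} to reduce to bounding $\lvol(T)$ for an arbitrary positive current $T\in\{\theta\}$ with analytic singularities, and then reduce further to the case where $T$ is \emph{strictly} positive: fix, by Theorem~\ref{thm:Demreg}, a strictly positive current $S_0\in\{\theta\}$ with analytic singularities and $S_0\ge\om_X$; for $\delta\in(0,1)\cap\Q$ the perturbation $T_\delta:=(1-\delta)T+\delta S_0$ is strictly positive with analytic singularities, obeys $T_\delta\ge\delta\om_X$, and $T_\delta\ge(1-\delta)T$ gives $\lvol(T_\delta)\ge(1-\delta)^n\lvol(T)$ by Proposition~\ref{prop:vmono}. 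It therefore suffices to exhibit, for each such $\delta$, an admissible triple with $\lvol(\om_Y)\ge\lvol(T_\delta)$, and then let $\delta\to 0$ and take the supremum over $T$.

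So fix $\delta$ and resolve $T_\delta$ by a modification $\pi\colon Y\to X$, chosen (via Hironaka) to be a composition of blow-ups along smooth centres, with $\pi^\star T_\delta=\b+[E]$, $\b$ smooth and $E$ an effective $\Q$-divisor (Lemma~\ref{lem:resolv}). Then $\{\pi^\star\theta\}=\{\b\}+\{E\}$, $\lvol(\b)=\lvol(T_\delta)$ by Lemma~\ref{lem:vres}, and pulling back $T_\delta\ge\delta\om_X$ and comparing smooth forms off $\supp E$ gives $\b\ge\delta\pi^\star\om_X\ge 0$. I would then invoke the standard fact that such a $\pi$ carries a $\pi$-exceptional effective $\Q$-divisor $F$ with $\{\pi^\star\om_X\}-\e\{F\}$ Hermitian for $0<\e\le\e_0$; granting it, the class $\{\b\}-\delta\e\{F\}=\{\b-\delta\pi^\star\om_X\}+\delta\big(\{\pi^\star\om_X\}-\e\{F\}\big)$ is Hermitian, being the sum of a class with a smooth semipositive representative and a Hermitian class, hence admits a Hermitian representative $\om_Y^\e$, and $\{\pi^\star\theta\}=\{\om_Y^\e\}+\{E+\delta\e F\}$ is an admissible triple. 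Finally $\{\b\}=\{\om_Y^\e\}+\delta\e\{F\}$ is big and $\{\om_Y^\e\}\to\{\b\}$ inside the big cone of $\BC^{1,1}(Y,\R)$ as $\e\to 0$, while the smooth forms $\b$ and $\om_Y^\e$ both have minimal singularities in their $\ddc$-classes; so Lemma~\ref{lem:volmin} together with continuity of $\lvol$ on the big cone (Proposition~\ref{prop:volbig}, applied on $Y$) yields $\lvol(\om_Y^\e)=\lvol(\{\om_Y^\e\})\to\lvol(\{\b\})=\lvol(\b)=\lvol(T_\delta)$, which is exactly what is needed.

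The step I expect to be the main obstacle is the existence of the correcting $\pi$-exceptional divisor $F$ with $\{\pi^\star\om_X\}-\e\{F\}$ Hermitian — equivalently, the preservation of bigness of a $\ddc$-class under pull-back by a modification. In the K\"ahler case this is classical; in the Hermitian setting I would reduce by Hironaka to a single blow-up along a smooth centre, glue an explicit local Hermitian form on the blow-up to $\pi^\star\om_X$ via a partition of unity (positivity being a local and open condition), and then iterate over the tower of blow-ups. Verifying that this glueing goes through verbatim in the Hermitian context is the only genuinely non-formal point in the argument.
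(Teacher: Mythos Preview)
Your proof is correct and follows essentially the same route as the paper's. For the reverse inequality the paper likewise resolves a strictly positive current with analytic singularities, passes to a tower of smooth blow-ups, and corrects the semipositive form $\b\ge\pi^\star\om_X$ by a small multiple of a closed representative $\theta_D$ of a $\pi$-exceptional effective $\Q$-divisor $D$ with $-D$ $\pi$-ample, setting $\om_c:=\b-c\theta_D>0$ and letting $c\to 0$ via Proposition~\ref{prop:volbig}; this is exactly your $F$ and $\om_Y^\e$, only made explicit rather than chosen abstractly in the class.

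The ``obstacle'' you flag is not a genuine issue: the relative ampleness of $-D$ for a composition of smooth blow-ups is a purely local fact about the $\cO(1)$ on the exceptional $\P^{r}$-bundles and produces a \emph{closed} smooth form $\theta_D\in\{D\}$ with $\pi^\star\om_X-c\theta_D>0$ for small $c>0$; nowhere does one use that $\om_X$ is closed, so the Hermitian setting requires no extra work. (For the easy inequality the paper uses the forward-referenced Proposition~\ref{prop:volmon} instead of your direct appeal to Proposition~\ref{prop:lvol}(iii),(iv), but the content is identical.)
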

\begin{proof} Pick a modification $\pi\colon Y\to X$ and a decomposition
$$
\{\pi^\star\theta\}=\{\om_Y\}+\{E\}
$$
with $\om_Y>0$ smooth and $E$ an effective $\Q$-divisor. Then Proposition~\ref{prop:volmon} yields 
$$
\lvol(\om_Y)=\lvol(\{\om_Y\})\le\lvol(\{\pi^\star\theta\})=\lvol(\{\theta\}), 
$$
where the last equality holds by Proposition~\ref{prop:lvol}~(iv). 

 Conversely, pick $\e>0$. By the first part of the proof, we can find $0\le T\in\{\theta\}$ with analytic singularities such that $\vol(\{\theta\})<\lvol(T)+\e$ and $T\geq \omega_X$, after rescaling $\omega_X$. Pick a modification $\pi\colon Y\to X$ such that $\pi^\star T$ has divisorial singularities, \ie $\pi^\star T=\b+[E]$ with $\b$ smooth and $E$ an effective $\Q$-divisor. Then $T\ge\om_X$ implies $\pi^\star T\ge\pi^\star\om_X$, and hence $\b\ge\pi^\star\om_X$. 

After passing to a higher modification, we may assume that $\pi$ is obtained as a finite sequence of blowups with smooth centers, so that there exists a $\pi$-exceptional $\Q$-divisor $D\ge 0$ on $Y$ such that $-D$ is $\pi$-ample. We can then find a smooth closed $(1,1)$-form $\theta_D\in\{D\}$ such that $\pi^\star\om_X-c\theta_D>0$ for all $c>0$ small enough. Then 
$$
\{\pi^\star\theta\}=\{\pi^\star T\}=\{\om_c\}+\{E+cD\},
$$
with $\om_c:=\b-c\theta_D\ge\pi^\star\om_X-c\theta_D>0$ for $0<c\ll 1$. By Lemma~\ref{lem:volmin}, we further have $\lvol(\om_c)=\lvol(\{\om_c\})$. By Proposition~\ref{prop:volbig}, this converges to $\lvol(\{\b\})=\lvol(\b)=\lvol(T)$ as $c\to 0$, and hence $\lvol(\{\theta\})<\lvol(\om_c)+\e$ for $c$ small enough. The result for $\lvol$ follows. The same argument applies for the upper volume $\uvol$, finishing the proof.  
\end{proof}

%
\subsubsection{General $\ddc$-classes} 
Assume now that $\{\theta\}\in\BC^{1,1}(X,\R)$ is psef. Then $\{\theta+\e\om_X\}$ is big for each $\e>0$, and its lower and upper volumes are both monotone increasing functions of $\e$ (see Proposition~\ref{prop:volbig}). We may thus introduce: 

\begin{defi}\label{defi:volpsef} 
The \emph{lower volume} and \emph{upper volume} of an arbitrary class $\{\theta\}\in\BC^{1,1}(X)$ are respectively defined by setting
\begin{equation}\label{equ:volpsef}
\lvol(\{\theta\}):=\inf_{\e>0}\lvol(\{\theta+\e\om_X\})=\lim_{\e\to 0}\lvol(\{\theta+\e\om_X\}),
\end{equation}
\begin{equation}\label{equ:uvolpsef}
\uvol(\{\theta\}):=\inf_{\e>0}\uvol(\{\theta+\e\om_X\})=\lim_{\e\to 0}\uvol(\{\theta+\e\om_X\})
\end{equation}
if $\{\theta\}$ is psef, and $\lvol(\{\theta\})=\uvol(\{\theta\}):=0$ otherwise. 
\end{defi}
Proposition~\ref{prop:volbig} implies that the definition is independent of the choice of $\om_X$, 
and is compatible with the big case. 

\begin{prop}\label{prop:volmon} Pick $\{\theta\}\in\BC^{1,1}(X)$. Then:
\begin{itemize}
\item[(i)] for any modification $\pi\colon Y\to X$, we have 
$$
\lvol(\{\pi^\star\theta\})=\lvol(\{\theta\}),\quad\uvol(\{\pi^\star\theta\})=\uvol(\{\theta\});
$$
\item[(ii)] for any $\{\theta'\}\in\BC^{1,1}(X)$ such that $\{\theta'\}-\{\theta\}$ is psef, we have 
$$
\lvol(\{\theta'\}\ge\lvol(\{\theta\}),\quad\uvol(\{\theta'\}\ge\uvol(\{\theta\});
$$
\item[(iii)] if $\theta\ge 0$ then $\lvol(\{\theta\})=\lvol(\theta)$. 
\end{itemize}
\end{prop}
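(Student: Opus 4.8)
The plan is to prove (ii) first, then derive (i) from its big-class case together with (ii), and finally settle (iii) directly. Throughout, the case where $\{\theta\}$ (resp. $\{\theta'\}$) fails to be psef is either trivial or does not occur: in (i) and (ii) both volumes then vanish and psef-ness is preserved under pull-back, as well as under the operations below by pushing positive currents forward; in (iii) the hypothesis $\theta\ge 0$ forces $\{\theta\}$ psef. I also fix a reference Hermitian form $\om_Y$ on $Y$.

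\emph{Part (ii).} Assume first that $\{\theta\}$ and $\{\theta'\}$ are both big. Since $\{\theta'\}-\{\theta\}$ is psef, Demailly's regularization (Theorem~\ref{thm:Demreg}) yields positive currents $P_j\ge 0$ with analytic singularities in $\{\theta'\}-\{\theta\}+\e_j\om_X$, with $\e_j\downarrow 0$. For any positive current $S\in\{\theta\}$ with analytic singularities, $S+P_j\ge S\ge 0$ is a positive current with analytic singularities in $\{\theta'+\e_j\om_X\}$, so Proposition~\ref{prop:vmono} gives $\lvol(S)\le\lvol(S+P_j)\le\lvol(\{\theta'+\e_j\om_X\})$. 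Taking the supremum over $S$ (Lemma~\ref{lem:volanal}) and then $j\to\infty$, continuity of $\lvol$ on the big cone (Proposition~\ref{prop:volbig}) gives $\lvol(\{\theta\})\le\lvol(\{\theta'\})$; the argument for $\uvol$ is identical. For general psef $\{\theta\}$ (which forces $\{\theta'\}$ psef), the classes $\{\theta+\e\om_X\}$ and $\{\theta'+\e\om_X\}$ are big with difference $\{\theta'\}-\{\theta\}$, so the big case applies, and I let $\e\to 0$.

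\emph{Part (i).} Suppose $\{\theta\}$ is big. Fixing a $\pi$-exceptional effective $\Q$-divisor $D\ge 0$ with $-D$ $\pi$-ample and a smooth closed form $\theta_D\in\{D\}$ with $\pi^\star\om_X-c\theta_D>0$ for $0<c\ll 1$ (as in the proof of Proposition~\ref{prop:Fuj}), one checks that $\{\pi^\star\theta\}$ is big. Since $\pi$ has connected fibres, every positive current in $\{\pi^\star\theta\}$ is $\pi^\star S$ for a positive $S\in\{\theta\}$, and this correspondence preserves singularity classes; hence $\pi^\star T_{\min}$ has minimal singularities in $\{\pi^\star\theta\}$ whenever $T_{\min}$ does in $\{\theta\}$, and Lemma~\ref{lem:volmin} combined with Proposition~\ref{prop:lvol}(iv) gives $\lvol(\{\pi^\star\theta\})=\lvol(\pi^\star T_{\min})=\lvol(T_{\min})=\lvol(\{\theta\})$, and similarly for $\uvol$. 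Now let $\{\theta\}$ be merely psef. Using $\pi^\star\om_X\le C\om_Y$ and the big case applied to $\{\theta+\e\om_X\}$, Proposition~\ref{prop:volbig} gives $\lvol(\{\theta\})=\lim_{\e\to 0}\lvol(\{\pi^\star(\theta+\e\om_X)\})\le\lim_{\e\to 0}\lvol(\{\pi^\star\theta+C\e\om_Y\})=\lvol(\{\pi^\star\theta\})$. Conversely, $\om_Y\le\delta_0^{-1}(\pi^\star\om_X-c\theta_D)$ for some $\delta_0>0$, so $\pi^\star\theta+\e\om_Y\le\pi^\star(\theta+\delta_0^{-1}\e\om_X)-\delta_0^{-1}c\e\theta_D$ as smooth forms; since $\delta_0^{-1}c\e\{D\}$ is psef, Proposition~\ref{prop:volbig}, part (ii), and the big case of (i) give
$$
\lvol(\{\pi^\star\theta+\e\om_Y\})\le\lvol(\{\pi^\star(\theta+\delta_0^{-1}\e\om_X)\}-\delta_0^{-1}c\e\{D\})\le\lvol(\{\pi^\star(\theta+\delta_0^{-1}\e\om_X)\})=\lvol(\{\theta+\delta_0^{-1}\e\om_X\}).
$$
Letting $\e\to 0$ gives $\lvol(\{\pi^\star\theta\})\le\lvol(\{\theta\})$, and the same works for $\uvol$.

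\emph{Part (iii).} If $\theta\ge 0$ then $0\in\PSH(\theta)$, so $\theta$ is itself a minimal-singularities current in $\{\theta\}$. When $\{\theta\}$ is big, Lemma~\ref{lem:volmin} gives $\lvol(\{\theta\})=\lvol(\theta)$ directly. In general $\{\theta\}$ is still psef, and the big case applied to $\theta+\e\om_X>0$ gives $\lvol(\{\theta\})=\lim_{\e\to 0}\lvol(\theta+\e\om_X)$; since Proposition~\ref{prop:vmono2} (with zero potentials) yields $\lvol(\theta)\le\lvol(\theta+\e\om_X)$ while Proposition~\ref{prop:lvol}(ii) yields $\lvol(\theta+\e\om_X)\le\lvol(\theta)+C\e$, the limit equals $\lvol(\theta)$.

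The step I expect to be the main obstacle is the general (psef, non-big) case of (i): because $\pi^\star\om_X$ degenerates along the exceptional locus it is not a Hermitian form on $Y$, so the reference forms used on $X$ and on $Y$ cannot be matched directly; the $\pi$-ample exceptional divisor $\theta_D$ is precisely what traps $\om_Y$ between $\pi^\star\om_X$ and a pulled-back class, up to an error term $\e\{D\}$ which part (ii) then absorbs.
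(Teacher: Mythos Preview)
Your argument for (ii) and (iii) is correct and matches the paper's, up to a cosmetic variant in (ii): the paper perturbs so that $\{\theta'-\theta\}$ itself becomes big and then uses a single current $S\in\{\theta'-\theta\}$ with analytic singularities, whereas you keep $\{\theta'-\theta\}$ psef and regularize by a sequence $P_j$; both are fine.

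For (i) you are in fact more careful than the paper, whose entire proof reads ``we may replace $\theta$ with $\theta+\e\om_X$ and assume that $\{\theta\}$ and $\{\pi^\star\theta\}$ are big; the result is then a direct consequence of Proposition~\ref{prop:lvol}(iv)''. You correctly isolate the subtlety this glosses over: $\lvol(\{\pi^\star\theta\})$ is defined as a limit along $\e\om_Y$, not along $\e\pi^\star\om_X$, and these must be reconciled. Your use of the relatively ample exceptional divisor together with (ii) to absorb the discrepancy is the right mechanism.

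There is, however, one genuine gap. You invoke a $\pi$-exceptional effective $D$ with $-D$ $\pi$-ample ``as in the proof of Proposition~\ref{prop:Fuj}'', but in that proof the paper explicitly \emph{passes to a higher modification} to ensure $\pi$ is a composition of smooth blowups before producing $D$. Here $\pi$ is a \emph{given} modification, and an arbitrary modification of compact complex manifolds need not be projective, so such a $D$ may not exist. The repair is short: your Direction~1 (using only $\pi^\star\om_X\le C\om_Y$) requires no such $D$ and yields $\lvol(\{\theta\})\le\lvol(\{\pi^\star\theta\})$ for any modification $\pi$. For the reverse inequality, dominate $\pi$ by a composition of smooth blowups $\tau=\pi\circ\sigma\colon Z\to X$; Direction~1 applied to $\sigma$ gives $\lvol(\{\pi^\star\theta\})\le\lvol(\{\sigma^\star\pi^\star\theta\})=\lvol(\{\tau^\star\theta\})$, and now your full two-sided argument (legitimate for $\tau$, which does carry a $\tau$-ample exceptional divisor) gives $\lvol(\{\tau^\star\theta\})=\lvol(\{\theta\})$. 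The same works for $\uvol$.
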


\begin{proof} 
To prove (i), we may replace $\theta$ with $\theta+\e\om_X$ and assume that $\{\theta\}$ and $\{\pi^\star\theta\}$ are big. The result is then a direct consequence of Proposition~\ref{prop:lvol}~(iv). 

To prove (ii), we may assume that $\{\theta\}$ is psef, as the result is otherwise trivial. By~\eqref{equ:volpsef}, we may further replace $\theta,\theta'$ with $\theta+2\e\om_X,\theta'+\e\om_X$ with $\e>0$, and hence assume that $\{\theta\},\{\theta'\}$ and $\{\theta'-\theta\}$ are big. Pick a strictly positive current $S\in\{\theta'-\theta\}$ with analytic singularities. For each $0\le T\in\{\theta\}$ with analytic singularities, $0\le T+S\in\{\theta'\}$ has analytic singularities as well. By Proposition~\ref{prop:vmono}, we infer $\lvol(\{\theta'\})\ge\lvol(T+S)\ge\lvol(T)$, and taking the supremum over $T$ yields $\lvol(\{\theta'\})\ge\lvol(\{\theta\})$. The same argument applies to $\uvol$, proving (ii). 

Next finally $\theta\ge 0$. When $\{\theta\}$ is big, the equality $\lvol(\{\theta\})=\lvol(\theta)$ follows from 
Lemma~\ref{lem:volmin}. In the general case, we thus have $\lvol(\{\theta+\e\om_X\})=\lvol(\theta+\e\om_X)$ for $\e>0$. Now $\lvol(\theta)\le \lvol(\theta+\e\om_X)\le \lvol(\theta)+O(\e)$ by Proposition~\ref{prop:vmono} and~\eqref{equ:vsum}, hence $\lvol(\theta+\e\om_X)\to \lvol(\theta)$, and (iii) follows.
\end{proof}

\begin{rmk} When $\{\theta\}$ is further assumed to be big, the analogue of (iii) also holds for $\uvol$, by Lemma~\ref{lem:volmin}. However, we do not know whether this extends to the psef case, as $\lim_{\e\to 0}\uvol(\theta+\e\om_X)=\uvol(\theta)$ is then unclear. 
\end{rmk}

\begin{prop}\label{prop:volBC} The lower and upper volume functions coincide on the subspace 
$$
H^{1,1}_\BC(X,\R)\subset\BC^{1,1}(X,\R)
$$
of usual (closed) Bott--Chern classes. If $\{\theta\}\in H^{1,1}_\BC(X,\R)$ is further nef, then 
$$
\lvol(\{\theta\})=\uvol(\{\theta\})=\int_X\theta^n.
$$
\end{prop}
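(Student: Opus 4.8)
The plan is to prove the two assertions in turn: first that $\lvol$ and $\uvol$ agree on $H^{1,1}_\BC(X,\R)$, and then that their common value on a nef class $\{\theta\}$ equals $\int_X\theta^n$. Throughout, $\theta$ denotes a \emph{closed} representative, and I write $C$ for the least constant with $\pm\theta\le C\om_X$.

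\emph{Coincidence on closed classes.} Fix $\{\theta\}\in H^{1,1}_\BC(X,\R)$. If $\{\theta\}$ is not psef, both volumes vanish by Definition~\ref{defi:volpsef}, so assume $\{\theta\}$ is psef; then $\{\theta+\e\om_X\}$ is big for every $\e>0$, and by Definitions~\ref{defi:volbig}--\ref{defi:volpsef} we have $\lvol(\{\theta\})=\lim_{\e\to0}\lvol(\{\theta+\e\om_X\})$ and likewise for $\uvol$. By Lemma~\ref{lem:volmin} the numbers $\lvol(\{\theta+\e\om_X\})$ and $\uvol(\{\theta+\e\om_X\})$ are the infimum and supremum of $\int_X(\theta+\e\om_X+\ddc\psi)^n$ over positive currents with minimal singularities, i.e. over $\psi\in\PSH(\theta+\e\om_X)$ which all coincide up to $O(1)$. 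Applying Theorem~\ref{thm:monomass} with reference form $\theta+\e\om_X$ to any two such potentials gives
\[
0\le\uvol(\{\theta+\e\om_X\})-\lvol(\{\theta+\e\om_X\})\le\D_{\theta+\e\om_X}.
\]
Since $\theta$ is closed, $\D_\theta=0$ by Stokes, hence $\D_{\theta+\e\om_X}=O(\e)$ by~\eqref{equ:Dvar}; letting $\e\to0$ yields $\lvol(\{\theta\})=\uvol(\{\theta\})$.

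\emph{The nef case.} Now suppose $\{\theta\}$ is nef. As $\theta$ is closed, the characterization of nef classes in~\S\ref{sec:posclass} provides closed representatives $\theta_j=\theta+\ddc\tau_j$ with $\tau_j\in C^\infty(X)$ and $\theta_j\ge-\e_j\om_X$, $\e_j\searrow0$. Set $\om_j:=\theta_j+2\e_j\om_X\ge\e_j\om_X>0$, a smooth form lying in the big class $\{\theta+2\e_j\om_X\}$, with smooth potential $\tau_j$ relative to the reference form $\theta+2\e_j\om_X$. As in the previous paragraph, every potential competing in $\lvol(\{\theta+2\e_j\om_X\})$ or $\uvol(\{\theta+2\e_j\om_X\})$ agrees with $\tau_j$ up to $O(1)$, so Theorem~\ref{thm:monomass} applied with the reference form $\theta+2\e_j\om_X$ (whose size relative to $\om_X$ is bounded independently of $j$) and $\D_{\theta+2\e_j\om_X}=O(\e_j)$ give $\lvol(\{\theta+2\e_j\om_X\})=\uvol(\{\theta+2\e_j\om_X\})+O(\e_j)=\int_X\om_j^n+O(\e_j)$. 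It then remains to compute $\int_X\om_j^n$: expanding $(\theta_j+2\e_j\om_X)^n$ in powers of $\om_X$, the $\om_X$-free term is $\int_X\theta_j^n=\int_X\theta^n$ (by Stokes, $\theta_j$ and $\theta$ being closed and cohomologous), while for $k\ge1$ one writes $\theta_j=(\theta_j+\e_j\om_X)-\e_j\om_X$ with $\theta_j+\e_j\om_X\ge0$, so that $\int_X\theta_j^{n-k}\wedge\om_X^k$ becomes an alternating combination of integrals $\int_X(\theta_j+\e_j\om_X)^a\wedge\om_X^b$ of positive smooth forms, bounded by Lemma~\ref{lem:bdmass2} in terms of the $\ddc$-classes $\{\theta+\e_j\om_X\}$ and $\{\om_X\}$; the bound is uniform in $j$ because $\pm(\theta+\e_j\om_X)\le(C+1)\om_X$ and $\ddc\tau_j=\theta_j-\theta\ge-(C+1)\om_X$ for $\e_j\le1$. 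Hence $\int_X\om_j^n=\int_X\theta^n+O(\e_j)$, and letting $j\to\infty$ (using that $\e\mapsto\lvol(\{\theta+\e\om_X\})$, $\e\mapsto\uvol(\{\theta+\e\om_X\})$ decrease to $\lvol(\{\theta\})$, $\uvol(\{\theta\})$ by Definition~\ref{defi:volpsef} and Proposition~\ref{prop:volbig}) gives $\lvol(\{\theta\})=\uvol(\{\theta\})=\int_X\theta^n$.

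\emph{Main obstacle.} The delicate point is that the nef approximants $\theta_j$, equivalently the potentials $\tau_j$, are not controlled pointwise and may blow up as $\e_j\to0$, so $\int_X\om_j^n$ cannot be estimated by naive pointwise bounds. What rescues the argument is that every quantity used is insensitive to this: the defect $\D_{\theta+\e_j\om_X}$ depends only on $\theta$ and $\om_X$; the mixed masses $\int_X(\theta_j+\e_j\om_X)^a\wedge\om_X^b$ depend only on the relevant $\ddc$-classes, through the bounded mass property (Lemma~\ref{lem:bdmass2}); and the sole feature of $\tau_j$ that enters is the \emph{uniform} lower bound $\ddc\tau_j\ge-(C+1)\om_X$, never $\|\tau_j\|_\infty$.
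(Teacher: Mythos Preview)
Your proof is correct and follows essentially the same approach as the paper: the first part is identical, and in the nef case both arguments rely on Stokes' theorem (for the leading term $\int_X\theta^n$) together with the bounded mass property (to control the $O(\e)$ remainders). The only cosmetic difference is that the paper runs the estimate for an arbitrary smooth $\f\in\PSH(\theta+\e\om_X)$ and takes the infimum directly, whereas you fix the specific representative $\om_j=\theta_j+2\e_j\om_X$ coming from the nef approximants and then reuse the $\D_{\theta+2\e_j\om_X}=O(\e_j)$ bound to pass from $\int_X\om_j^n$ to $\lvol$ and $\uvol$; both routes use the same ingredients and neither is substantially shorter.
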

We simply denote by 
$$
\vol\colon H^{1,1}_\BC(X,\R)\to\R_{\ge 0}
$$
the common restriction of $\lvol,\uvol$.

\begin{proof} Assume $\theta$ is closed. If $\{\theta\}$ is not psef, then $\lvol(\{\theta\})=\uvol(\{\theta\})=0$ by definition. Assume that $\{\theta\}$ is psef, and pick $0\le T_\e\in\{\theta+\e\om_X\}$ with minimal singularities for each $\e>0$. Since $\theta$ is closed, we have $\D_\theta=0$ (see~\eqref{equ:Dthe}), and hence $\D_{\theta+\e\om_X}=O(\e)$. By Theorem~\ref{thm:monomass}, we infer $\uvol(T_\e)\le\lvol(T_\e)+O(\e)$, \ie 
$$
\uvol(\{\theta+\e\om_X\})\le\lvol(\{\theta+\e\om_X\})+O(\e),
$$
by Lemma~\ref{lem:volmin}. Letting $\e\to 0$ proves $\lvol(\{\theta\})=\uvol(\{\theta\})$. 

Assume next $\{\theta\}$ is further nef. Rescaling $\omega_X$ we can assume $\theta\le \omega_X$. For each smooth function $\f\in\PSH(\theta+\e\omega_X)\subset\PSH(X,2\om_X)$, the bounded mass property and Stokes' theorem yield
\begin{flalign*}
	\int_X (\theta+\e \omega_X+dd^c \f)^n &= \int_X (\theta+dd^c \f)^n + \sum_{k=1}^n \binom{n}{k} \e^k \int_X (\theta+dd^c\f)^{n-k} \wedge \omega_X^k\\
	& = \int_X \theta^n +   \sum_{k=1}^n \binom{n}{k} \e^{k} \int_X(\theta-2\omega_X +2\omega_X
	+dd^c\f)^{n-k} \wedge \omega_X^k\\
	& = \int_X \theta^n +O(\e). 
\end{flalign*}
Taking the infimum over all $\f$ yields 
$
\lvol(\{\theta+\e\om_X\})=\int_X\theta^n+O(\e),
$
hence $\lvol(\{\theta\})=\int_X\theta^n$.
\end{proof}

%
\subsubsection{The positive volume property}\label{sec:posvol}

As a consequence of Proposition~\ref{prop:volmon} we have: 

\begin{cor}\label{cor:posvol} 
We either have $\lvol\equiv 0$ on $\BC^{1,1}(X,\R)$, or $\lvol>0$ on the big cone.
\end{cor}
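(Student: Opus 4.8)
The plan is to argue by contraposition, relying on the monotonicity property Proposition~\ref{prop:volmon}(ii), the degree-$n$ homogeneity of $\lvol$, and the openness of the big cone. Assume $\lvol$ does not vanish identically on $\BC^{1,1}(X,\R)$; I want to show $\lvol>0$ on the whole big cone. First I would pick a class $\{\theta_0\}$ with $\lvol(\{\theta_0\})>0$. Since $\lvol$ vanishes by definition on non-psef classes, $\{\theta_0\}$ must be psef, and since $\lvol(\{\theta_0\})=\inf_{\e>0}\lvol(\{\theta_0+\e\om_X\})$ (Definition~\ref{defi:volpsef}) with each $\{\theta_0+\e\om_X\}$ big, I may replace $\theta_0$ by $\theta_0+\e\om_X$ for a small $\e>0$ and assume from the start that $\{\theta_0\}$ is big with $\lvol(\{\theta_0\})>0$.

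Next I would record the homogeneity $\lvol(\delta\{\theta_0\})=\delta^n\,\lvol(\{\theta_0\})$ for $\delta>0$. This follows at once from Definition~\ref{defi:volbig} and Proposition~\ref{prop:lvol}(i): multiplication by $\delta$ is a bijection from the positive currents in $\{\theta_0\}$ onto those in $\{\delta\theta_0\}$, it preserves singularity classes, and it multiplies $\int_X S^n$ by $\delta^n$.

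Then, for an arbitrary big class $\{\alpha\}\in\BC^{1,1}(X,\R)$, I would invoke that the big cone is open in the Fr\'echet space $\BC^{1,1}(X,\R)$ (it is the interior of the psef cone, Lemma~\ref{lem:psefbig}) together with the fact that $\delta\{\theta_0\}\to 0$ as $\delta\to 0^+$ (scale a fixed representative of $\theta_0$). Hence $\{\alpha\}-\delta\{\theta_0\}=\{\alpha-\delta\theta_0\}$ is still big, in particular psef, for all sufficiently small $\delta>0$. Applying Proposition~\ref{prop:volmon}(ii) with $\{\theta'\}=\{\alpha\}$ and $\{\theta\}=\delta\{\theta_0\}$ gives
$$
\lvol(\{\alpha\})\ \ge\ \lvol(\delta\{\theta_0\})\ =\ \delta^n\,\lvol(\{\theta_0\})\ >\ 0,
$$
as desired.

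I do not anticipate any real obstacle: the corollary is a formal consequence of the monotonicity and homogeneity of $\lvol$ and the openness of the big cone. The only mildly delicate point is the initial normalization, i.e.\ passing from ``$\lvol\not\equiv 0$'' to ``$\lvol>0$ at some big class'', which uses that adding a positive multiple of $\om_X$ to a psef class does not decrease its lower volume.
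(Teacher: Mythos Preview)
Your proof is correct and follows essentially the same approach as the paper: both rely on Proposition~\ref{prop:volmon}(ii) together with the degree-$n$ homogeneity of $\lvol$. The only cosmetic difference is that the paper scales the big class $\{\theta\}$ up (using that $t\{\theta\}-\{\theta_0\}$ is big for $t\gg 1$), whereas you scale the reference class $\{\theta_0\}$ down (using that $\{\alpha\}-\delta\{\theta_0\}$ is big for $\delta\ll 1$); these are dual formulations of the same argument, and your preliminary normalization of $\{\theta_0\}$ to a big class is harmless but not strictly needed.
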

\begin{proof} Assume there exists $\{\theta_0\}\in\BC^{1,1}(X,\R)$ with $\lvol(\{\theta_0\})>0$. For any big class $\{\theta\}$, $t\{\theta\}-\{\theta_0\}$ is big for $t\gg 1$, and hence $t^n\lvol(\{\theta\})\ge\lvol(\{\theta_0\})>0$, by Proposition~\ref{prop:volmon}. 
\end{proof}

\begin{defi} We say that $X$ has the \emph{positive volume property} if $\lvol>0$ on the big cone of $\BC^{1,1}(X,\R)$. 
\end{defi}
By Proposition~\ref{prop:volmon} and Corollary~\ref{cor:posvol}, the positive volume property holds iff $\lvol(\om)>0$ for some Hermitian form $\om>0$ on $X$. 

Like the bounded mass property, the positive volume property is bimeromorphically invariant (by Proposition~\ref{prop:volmon}, or~\cite[Theorem~3.7]{GL22}). It always holds in dimension $n=2$;
when $n=3$ it holds for Vaisman manifolds \cite[Proposition 3.10]{AGL23}, as well as for some nilmanifolds \cite[Theorem 4.4]{AGL23}
and solvmanifolds \cite[Theorem 4.7]{AGL23}.
No counterexample is known at the moment of this writing. 

\smallskip

Recall that the bounded mass property is inherited by any submanifold. Here we   
establish a `dual' result for the positive volume property:

\begin{prop} 
Assume $\pi\colon X\to Y$ is a holomorphic submersion onto a compact complex manifold. 
If $X$ has the positive volume property, then so does $Y$.
\end{prop}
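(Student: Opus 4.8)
The plan is to prove the contrapositive: \emph{if $Y$ does not have the positive volume property, then neither does $X$}. Note first that $Y=\pi(X)$ is a holomorphic image of $X$, hence inherits the bounded mass property, so that $\lvol$ is defined on $Y$. By Corollary~\ref{cor:posvol} the failure of the positive volume property on $Y$ forces $\lvol\equiv 0$ on $\BC^{1,1}(Y,\R)$; in particular $\lvol(\om_Y)=0$ for a fixed Hermitian form $\om_Y$ on $Y$, so there exist bounded $\om_Y$-psh functions $\psi$ on $Y$ with $\int_Y(\om_Y+\ddc\psi)^m$ as small as we please, where $m:=\dim Y$. Writing $d:=\dim X-\dim Y\ge 0$, so that $n=m+d$, it will suffice to produce a Hermitian form $\om_X$ on $X$ with $\lvol(\om_X)=0$: since $\om_X$ is Hermitian, hence big, this contradicts the positive volume property of $X$ (again via Corollary~\ref{cor:posvol}).

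The submersion hypothesis enters through the construction of $\om_X$. First I would fix an auxiliary Hermitian metric $\om_X^0>0$ on $X$, denote by $V:=\ker(d\pi)\subset TX$ the holomorphic vertical subbundle and by $H:=V^{\perp}$ its $\om_X^0$-orthogonal complement (a smooth complex subbundle with $TX=V\oplus H$), and set $\om_V(\xi,\bar\eta):=\om_X^0(P\xi,\overline{P\eta})$, where $P\colon TX\to V$ is the orthogonal projection. This $\om_V$ is a smooth semi-positive $(1,1)$-form with kernel exactly $H$; hence it has pointwise rank $d$, so that $\om_V^{\,d+1}\equiv 0$, and moreover $\om_X:=\pi^\star\om_Y+\om_V$ is a genuine Hermitian form: for $\xi=\xi_V+\xi_H\ne 0$ one computes $\om_X(\xi,\bar\xi)=\om_Y(d\pi\,\xi_H,\overline{d\pi\,\xi_H})+\om_X^0(\xi_V,\bar\xi_V)>0$, using that $d\pi$ maps $H$ complex-linearly isomorphically onto $TY$.

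With this $\om_X$ in hand I would bound $\lvol(\om_X)$ by means of pulled-back test functions. For $\psi$ a bounded $\om_Y$-psh function on $Y$, the function $\pi^\star\psi$ is bounded and $\om_X$-psh, since $\om_X+\ddc\pi^\star\psi=\pi^\star(\om_Y+\ddc\psi)+\om_V\ge 0$. As all potentials are bounded, the $n$-fold non-pluripolar product is the Bedford--Taylor product, and by multilinearity
$$
(\om_X+\ddc\pi^\star\psi)^n=\sum_{k=0}^n\binom nk\bigl(\pi^\star(\om_Y+\ddc\psi)\bigr)^{\wedge k}\wedge\om_V^{\,n-k}.
$$
In local charts where $\pi$ is a holomorphic projection one has $\bigl(\pi^\star(\om_Y+\ddc\psi)\bigr)^{\wedge k}=\pi^\star\bigl((\om_Y+\ddc\psi)^{\wedge k}\bigr)$, which vanishes for $k>m$, while $\om_V^{\,n-k}\equiv 0$ for $k<m$ by the rank bound; thus only the term $k=m$ remains, and the projection formula gives
$$
\int_X(\om_X+\ddc\pi^\star\psi)^n=\binom nm\int_X\pi^\star\bigl((\om_Y+\ddc\psi)^m\bigr)\wedge\om_V^{\,d}=\binom nm\int_Y h\,(\om_Y+\ddc\psi)^m,
$$
where $h:=\pi_\star(\om_V^{\,d})$ is the smooth positive function $y\mapsto\int_{X_y}\om_V^{\,d}$ on $Y$, each fibre $X_y$ being a $d$-dimensional compact submanifold on which $\om_V$ restricts to a Hermitian metric. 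Consequently $\lvol(\om_X)\le\binom nm\,\|h\|_{L^\infty(Y)}\int_Y(\om_Y+\ddc\psi)^m$ for every such $\psi$, and letting $\int_Y(\om_Y+\ddc\psi)^m\to 0$ yields $\lvol(\om_X)=0$, as required.

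I expect the main obstacle to be the construction of the rank-$d$ vertical form $\om_V$ and, above all, the vanishing $\om_V^{\,d+1}\equiv 0$ it provides: this is precisely what kills the mixed contributions $\int_Y(\om_Y+\ddc\psi)^k\wedge\pi_\star(\om_X^{\,n-k})$ with $k<m$, which for a generic Hermitian form on $X$ would be present and bounded away from $0$, thereby obstructing the conclusion. The remaining ingredients — that $\pi^\star\psi$ is $\om_X$-psh, the compatibility of Bedford--Taylor products with pullback under a holomorphic submersion (reducing to the local product structure), and the projection formula — are routine.
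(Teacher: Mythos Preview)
Your proof is correct and follows essentially the same route as the paper: both construct the Hermitian form $\om_X=\pi^\star\om_Y+\om_V$ from a rank-$d$ vertical form satisfying $\om_V^{\,d+1}=0$, then exploit the resulting identity $(\om_X+\ddc\pi^\star\psi)^n=\binom{n}{m}\pi^\star(\om_Y+\ddc\psi)^m\wedge\om_V^{\,d}$ together with the projection formula to compare $\lvol(\om_X)$ and $\lvol(\om_Y)$. The only cosmetic difference is that you argue by contrapositive whereas the paper argues directly (and swaps the roles of the letters $d$ and $m$).
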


\begin{proof} Set $d:=\dim Y$, and pick a hermitian form $\beta_X$ on $X$. We claim that there exists a semipositive $(1,1)$-form $\theta\ge 0$ on $X$ that is (strictly) positive along the fibers of $\pi$, and satisfies $\theta^{n-d+1}=0$. Indeed, pick a $\mathcal C^\infty$-projection $p\colon T_X\to T_{X/Y}$ onto the holomorphic subbundle $T_{X/Y}=\ker d\pi\hto T_X$. The claim then holds with 
 $\theta(v,w):=\beta_X(p(v),p(w))$.  Given a Hermitian form $\om_Y>0$ on $Y$, we then have $\om_X:=\pi^\star\om_Y+\theta>0$. For any smooth $\f\in\PSH(\om_Y)$, we further have 
$$
(\om_X+\ddc\pi^\star\f)^n=(\pi^\star(\om_Y+\ddc\f)+\theta)^n={n\choose d}\pi^\star(\om_Y+\ddc\f)^d\wedge\theta^{n-d},
$$
and hence 
$$
0<\lvol(\om_X)\le{n\choose d}\int_Yf (\om_Y+\ddc\f)^d\le C\int_Y(\om_Y+\ddc\f)^d, 
$$
with  $0<f:=\pi_\star\theta^{n-d}\in C^\infty(Y)$ (see for instance~\cite[Section 2.15]{DemBook}).
We infer $\lvol(\om_Y))>0$, and the result follows. 
\end{proof}

 %
\subsection{Characterization of the Fujiki class}  \label{sec:fujiki}

Generalizing~\cite[Corollary~7.11]{B02} (which dealt with closed classes on a K\"ahler manifold), we next show: 

\begin{thm}\label{thm:bigvol} The lower volume function $\lvol\colon\BC^{1,1}(X,\R)\to\R_{\ge 0}$ is continuous. In particular, it vanishes outside of the big cone. 
\end{thm}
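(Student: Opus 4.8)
The plan is a case analysis on the position of $\{\theta\}\in\BC^{1,1}(X,\R)$ relative to the psef and big cones; only the behaviour along the boundary of the psef cone is delicate. Since $\BC^{1,1}(X,\R)$ is Fr\'echet (Lemma~\ref{lem:Frechet}), hence metrizable, continuity is sequential: for $\{\theta_j\}\to\{\theta\}$ I choose representatives $\theta_j\to\theta$ smoothly, so that $\theta-\e_j\om_X\le\theta_j\le\theta+\e_j\om_X$ as forms for some $\e_j\to 0^+$. Then $\{\theta+\e_j\om_X\}-\{\theta_j\}$ and $\{\theta_j\}-\{\theta-\e_j\om_X\}$ are psef, so the monotonicity of $\lvol$ along the psef order (Proposition~\ref{prop:volmon}(ii)) gives the sandwich
\[
\lvol(\{\theta-\e_j\om_X\})\ \le\ \lvol(\{\theta_j\})\ \le\ \lvol(\{\theta+\e_j\om_X\}),
\]
which drives the whole argument.

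First I would handle upper semicontinuity and the easy cases. If $\{\theta\}$ is not psef, then $\{\theta+\e_j\om_X\}$ is not psef for $j\gg1$ (the psef cone is closed, Lemma~\ref{lem:psefbig}), so the right-hand side is eventually $0$ and $\lvol(\{\theta_j\})\to0=\lvol(\{\theta\})$. If $\{\theta\}$ is psef, then $\lvol(\{\theta+\e_j\om_X\})\to\lvol(\{\theta\})$ by Definition~\ref{defi:volpsef} together with the monotonicity of $\e\mapsto\lvol(\{\theta+\e\om_X\})$, so $\limsup_j\lvol(\{\theta_j\})\le\lvol(\{\theta\})$ in all cases. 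If in addition $\{\theta\}$ lies in the open big cone, then $\{\theta-\e_j\om_X\}$ is big for $j\gg1$ and $\lvol(\{\theta-\e_j\om_X\})\to\lvol(\{\theta\})$ by continuity of $\lvol$ on the big cone (Proposition~\ref{prop:volbig}); hence $\liminf_j\lvol(\{\theta_j\})\ge\lvol(\{\theta\})$, and continuity holds on the big cone.

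There remains the case $\{\theta\}$ psef but not big. Then $\{\theta-\e\om_X\}$ fails to be psef for every $\e>0$ -- otherwise $\{\theta\}=\{\theta-\e\om_X\}+\e\{\om_X\}$ would be a sum of a psef class and a Hermitian class, hence big -- so the left end of the sandwich degenerates to $0$, and lower semicontinuity at $\{\theta\}$ (hence, with the upper bound above, continuity there, and also the ``in particular'' clause) reduces to the single claim that $\lvol(\{\theta\})=0$ for every psef, non-big class. By Corollary~\ref{cor:posvol} we may assume $\lvol>0$ on the big cone (otherwise $\lvol\equiv0$ and the theorem is trivial), and I would argue contrapositively: if $\lvol(\{\theta\})>0$ then $\{\theta\}$ is big. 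Since $\lvol(\{\theta\})=\lim_{\e\to0}\lvol(\{\theta+\e\om_X\})>0$, Fujita approximation (Proposition~\ref{prop:Fuj}) supplies, for all small $\e>0$, a modification $\pi\colon Y\to X$ and a splitting $\{\pi^\star(\theta+\e\om_X)\}=\{\om_Y\}+\{E\}$ with $\om_Y>0$ smooth, $E$ an effective $\Q$-divisor, and $\lvol(\om_Y)$ bounded below uniformly in $\e$; equivalently, a minimal-singularities current in $\{\theta+\e\om_X\}$ carries a definite amount of nonpluripolar Monge--Amp\`ere mass. The aim is then to let $\e\to0$ and upgrade this persistent mass into a K\"ahler current in $\{\theta\}$ itself -- the hermitian analogue of the K\"ahler-case principle of~\cite{BEGZ10} that a positive volume forces bigness -- by solving the degenerate complex Monge--Amp\`ere equations of Section~\ref{sec:solvMA} on the big classes $\{\theta+\e\om_X\}$, with right-hand side concentrated where $V_\theta$ is bounded and with uniform a priori estimates permitting the passage to the limit.

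I expect this last step to be the main obstacle, and the obstruction is genuine: the assignment $T\mapsto\{T\}$ is not weakly continuous on quasi-closed currents (cf.\ the discussion following~\eqref{equ:BCcurrent}), so one cannot simply extract a weak limit of the Fujita data and read off a K\"ahler current in the limit class -- it is the Monge--Amp\`ere solutions and their a priori control that legitimize the limit. Everything else is a formal consequence of the monotonicity, homogeneity and big-cone continuity of $\lvol$: combining the global upper semicontinuity above with the boundary vanishing and with $\lvol\ge0$ yields lower semicontinuity at the boundary classes, completing the proof; the final sentence then follows, since $\lvol$ vanishes off the psef cone by definition and on its boundary by the claim just established.
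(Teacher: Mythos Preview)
Your reduction is correct and matches the paper: the sandwich via Proposition~\ref{prop:volmon}(ii), the upper semicontinuity everywhere, the continuity on the big cone via Proposition~\ref{prop:volbig}, and the identification of the single remaining claim ``$\lvol(\{\theta\})>0\Rightarrow\{\theta\}$ big'' are exactly how the paper proceeds. The gap is in your plan for that claim. You propose to solve Monge--Amp\`ere equations on $\{\theta+\e\om_X\}$ with a fixed right-hand side and extract a limiting K\"ahler current in $\{\theta\}$; but a weak limit of positive currents $T_\e\in\{\theta+\e\om_X\}$ with $T_\e^n=c_\e f\,dV$ only yields a \emph{positive} current in $\{\theta\}$, and nothing in the a~priori estimates of Section~\ref{sec:solvMA} forces strict positivity of the limit. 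Even in the K\"ahler case this is not how~\cite{B02,BEGZ10} establish ``positive volume $\Rightarrow$ big'': the K\"ahler current is not constructed directly.

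The paper's mechanism is different and is the missing idea. One does not build a K\"ahler current; instead one uses the Hahn--Banach criterion (Lemma~\ref{lem:HB}) to reduce bigness of $\{\theta\}$ to a uniform lower bound $\int_X\theta\wedge\g\ge\d$ over all $\ddc$-closed positive $(n-1,n-1)$-forms $\g$ normalized by $\int_X\om_X\wedge\g=1$. For each such $\g$, Fujita approximation (Proposition~\ref{prop:Fuj}) and~\cite[Theorem~4.1]{GL23} produce currents $T_j\in\{\theta+\e_j\om_X\}$ with $T_j^n=c_j\,\om_X\wedge\g$ and $\liminf_j c_j\ge\lvol(\{\theta\})$; the right-hand side depends on $\g$. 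Chiose's mass argument then shows that on a set of $\mu$-measure $\ge\tfrac12$ (where $\mu=\om_X\wedge\g$) one has $T_j^{n-1}\wedge\om_X\le 2M\mu$ with $M$ the uniform bound of Lemma~\ref{lem:bdmass2}, hence $T_j\ge\tfrac{c_j}{2nM}\om_X$ pointwise there, and since $\g$ is $\ddc$-closed this gives $\int_X(\theta+\e_j\om_X)\wedge\g=\int_X T_j\wedge\g\ge\tfrac{c_j}{4nM}$. Letting $j\to\infty$ yields $\d=\lvol(\{\theta\})/4nM$, independent of $\g$. Your framework is sound, but you should replace the limit-extraction step by this Lamari--Chiose duality argument.
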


In other words, the last point states that $\lvol(\{\theta\})>0$ implies $\{\theta\}$ is big. This extends~\cite[Theorem~4.6]{GL22}, which dealt with nef classes. Note that the analogue of Theorem~\ref{thm:bigvol} fails for the upper volume:

\begin{exam} By~\cite[Example~3.5]{GL22}, any product $X=C\times C'$ of two curves carries a semipositive $(1,1)$-form $\theta$ such that $\int_X\theta^2>\lvol(\{\theta\})=0$. Since $X$ has the positive volume property (in fact $X$ is K\"ahler, hence $\lvol(\omega_X)>0$) $\{\theta\}$ is therefore not big, while $\uvol(\{\theta\})\ge\int_X\theta^2>0$. 
\end{exam}

\begin{proof}[Proof of Theorem~\ref{thm:bigvol}] The second point formally follows from the first one, since any $\{\theta\}$ on the boundary of the psef cone satisfies $\lvol(\{\theta-\e\om_X\})=0$ for $\e>0$ (see Definition~\ref{defi:volpsef}. However, we actually first show the former, by adapting (as in~\cite[Theorem~4.6]{GL22}) a strategy due to Chiose~\cite[Theorem~3.1]{Chi}. 

Assume thus that $\{\theta\}\in\BC^{1,1}(X,\R)$ satisfies $\lvol(\{\theta\})>0$. By Definition~\ref{defi:volpsef}, $\{\theta\}$ is necessarily psef, and we need to show that it is big, \ie that $\{\theta-\d\om_X\}$ is psef for some $\d>0$. By Hahn--Banach (see Lemma~\ref{lem:HB}), this is equivalent to showing $\int_X\theta\wedge\g\ge\d$ for each $\ddc$-closed $(n-1,n-1)$-form $\g>0$, normalized so that the volume form 
$$
\mu:=\om_X\wedge\g
$$
has mass $1$. Applying Proposition~\ref{prop:Fuj} to the big classes $\{\theta+\e\om_X\}$, $\e>0$, yields a sequence of modifications $\pi_j\colon Y_j\to X$ and decompositions 
$$
\pi_j^\star\{\theta+\e_j\om_X\}=\{\om_j\}+\{E_j\}
$$
where $\om_j$ a Hermitian form on $Y_j$, $E_j$ an effective $\Q$-divisor, $\e_j\to 0$ and $\lvol(\om_j)\to\lvol(\{\theta\})$. 

By~\cite[Theorem 4.1]{GL23}, for each $j$ there exists a positive current $0\le S_j\in\{\om_j\}$ with bounded potentials, smooth outside the exceptional locus of $\pi_j$, such that  
$$
S_j^n=c_j \pi_j^*\mu
$$
for some constant $c_j>0$. Note that $c_j=\int_X S_j^n\ge\lvol(\om_j)$, 
and hence 
\begin{equation}\label{equ:cj}
\liminf_j c_j\ge\lvol(\{\theta\}).
\end{equation} 
Denote by $T_j\in\{\theta+\e_j\om_X\}$ the unique positive current such that $\pi_j^\star T_j=S_j+[E_j]$. Then $T_j$ is smooth on a nonempty Zariski open set $\Om_j\subset X$, and satisfies $T_j^n=c_j\mu$. 

By Lemma~\ref{lem:bdmass2} we have $\int_XT_j^{n-1}\wedge\om_X\le M$ for a constant $M>0$ only depending on $\{\theta\}$ and $\{\om_X\}$. Introduce the set 
$$
A_j:=\left\{ x \in \Omega_j\mid T_j^{n-1}\wedge\omega_X\ge  2M \mu\text{ at }x\right\},
$$
and note that 
$$
\mu(A_j)\le\frac{1}{2M}\int_{A_j} T_j^{n-1}\wedge\om_X\le\frac 12, 
$$
and hence $\mu(\Om_j\setminus A_j)\ge\tfrac 12$. 
At each point of $\Omega_j \setminus A_j$ we have 
$$
T_j^{n-1}\wedge \omega_X\le 2M \mu=\frac{2M}{c_j}T_j^n, 
$$
which implies $T_j\ge\frac{c_j}{2nM}\om_X$, and hence $T_j\wedge\g\ge\frac{c_j}{2nM}\mu$. Since $\g$ is $\ddc$-closed, we infer
$$
\int_X(\theta+\e_j\om_X)\wedge\g=\int_X T_j\wedge\g\ge\frac{c_j}{2n M}\mu(\Om_j\setminus A_j)\ge\frac{c_j}{4nM}, 
$$
and~\eqref{equ:cj} yields the desired estimate 
$$
\int_X\theta\wedge\g\ge\d:=\frac{\lvol(\{\theta\})}{4nM}>0. 
$$

\smallskip

We now show that the lower volume function is continuous on $\BC^{1,1}(X,\R)$. By Proposition~\ref{prop:volbig}, it is continuous on the big cone, and it is identically zero outside the psef cone. Assume $\{\theta\}\in\BC^{1,1}(X,\R)$ is psef but not big. By the first part of the proof, $\lvol(\{\theta\})=0$, and we thus need to show $\lvol(\{\theta_j\})\to 0$ for any smoothly convergent sequence $\theta_j\to\theta$. We have $\theta_j\le\theta+\e_j\om_X$ with $\e_j\to 0$, and hence $\lvol(\{\theta_j\})\le\lvol(\{\theta+\e_j\om_X\})$, by Proposition~\ref{prop:volmon}. By definition, $\lvol(\{\theta+\e_j\om_X\})$ converges to $\lvol(\{\theta\})=0$, and we are done. 
\end{proof}

\begin{rmk} The above proof yields, more precisely, that $\{\theta\}$ is `more psef' than $\d\{\om_X\}$ for
$\d:=\lvol(\{\theta\})/4nM$ where 
$$
M:=\sup\left\{\int_X T^{n-1}\wedge\om_X\mid 0\le T\in\{\theta\}\right\}.
$$
\end{rmk}
 
 A simple consequence is the following interesting characterization of the Fujiki class, which establishes
 a transcendental version of the Grauert--Riemenschneider conjecture:
 
 \begin{cor}\label{cor:carFuj} 
Let $X$ be a compact complex manifold $X$ of dimension $n$.
The following properties are equivalent:
\begin{itemize}
\item[(i)] $X$ is a Fujiki manifold, \ie bimeromorphic to a compact K\"ahler manifold;
\item[(ii)] $X$ has the bounded mass property and it admits a closed positive $(1,1)$-current $T$ such that $\int_X T^n>0$.
\end{itemize}
\end{cor}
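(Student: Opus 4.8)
The plan is to deduce Corollary~\ref{cor:carFuj} from Theorem~\ref{thm:bigvol} together with the results of~\cite{GL22} already imported in the excerpt. The implication (i)$\Rightarrow$(ii) is the easy direction: if $X$ is Fujiki then it has the bounded mass property by Proposition~\ref{pro:bmp}, and pulling back a K\"ahler class along a bimeromorphic model and pushing forward (or using that Fujiki manifolds carry a closed strictly positive current in a big Bott--Chern class) produces a closed positive current $T$ with $\int_X T^n>0$; one can alternatively cite Lemma~\ref{lem:volmin} to see $\vol(\{\theta\})=\int_X\theta^n>0$ for a nef-and-big class coming from the K\"ahler model.

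For the substantive direction (ii)$\Rightarrow$(i), first I would observe that, since $X$ has the bounded mass property, everything in Section~\ref{sec:vol} applies. Given the closed positive current $T$ with $\int_X T^n>0$, its $\ddc$-class $\{\theta\}=\{T\}$ lies in $H^{1,1}_\BC(X,\R)$, and by Example~\ref{exam:vclosed} (or Lemma~\ref{lem:volmin}) we have $\lvol(\{\theta\})=\uvol(\{\theta\})=\int_X T_{\min}^n$. The monotonicity Proposition~\ref{prop:vmono2} gives $\int_X T_{\min}^n\ge$ something positive: more precisely, since $T$ is a positive current in the psef class $\{\theta\}$, we have $\uvol(\{\theta\})\ge\int_X T^n>0$, and then Theorem~\ref{thm:monomass} applied to the closed form $\theta$ (so $\D_\theta=0$) forces $\lvol(\{\theta\})=\uvol(\{\theta\})\ge\int_X T^n>0$. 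Hence $\lvol$ is not identically zero on $\BC^{1,1}(X,\R)$.

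Now I would invoke Theorem~\ref{thm:bigvol}: since $\lvol(\{\theta\})>0$, the class $\{\theta\}$ is big, i.e.\ it contains a K\"ahler current (a strictly positive closed current). By Demailly's regularization (Theorem~\ref{thm:Demreg}) we may take this K\"ahler current to have analytic singularities, and then Lemma~\ref{lem:resolv} provides a modification $\pi\colon Y\to X$ with $\pi^\star(\theta+\ddc\f)=\b+[E]$, where $\b$ is smooth, closed (as noted after Lemma~\ref{lem:resolv}, since the current is closed), and strictly positive; thus $\b$ is a K\"ahler form on $Y$. Therefore $Y$ is K\"ahler and $X$ is bimeromorphic to $Y$, i.e.\ $X$ is Fujiki.

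The only genuinely delicate point is making sure the K\"ahler current supplied by Theorem~\ref{thm:bigvol} can be arranged to be \emph{closed}: Theorem~\ref{thm:bigvol} asserts bigness of the $\ddc$-class $\{\theta\}\in H^{1,1}_\BC(X,\R)$, and a priori a representative $\theta+\ddc\f\ge\e\om_X$ produced from bigness in $\BC^{1,1}(X,\R)$ need only be quasi-closed; but here $\theta$ is closed and $\f$ is a genuine quasi-psh function, so $\theta+\ddc\f$ is automatically closed. This is where I expect to spend a sentence of care, together with checking that Demailly regularization and the resolution of analytic singularities preserve closedness (which they do, since they only modify $\f$). Everything else is a direct assembly of the cited statements, so the main obstacle is really already absorbed into the proof of Theorem~\ref{thm:bigvol}; the corollary itself is a short deduction.
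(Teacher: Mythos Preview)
Your proposal is correct and follows essentially the same route as the paper: show $\vol(\{\theta\})>0$ from $\int_X T^n>0$ using that $\theta$ is closed (so $\D_\theta=0$), invoke Theorem~\ref{thm:bigvol} to conclude $\{\theta\}$ is big, then extract a K\"ahler modification. The paper is terser---it simply says the existence of such $T$ ``is equivalent to $\vol(\{\theta\})>0$, by definition and Example~\ref{exam:vclosed}'' and cites Proposition~\ref{prop:Fuj} for the last step---whereas you spell out the resolution via Lemma~\ref{lem:resolv}; and your worry about closedness is unnecessary, since (as you yourself note) $\theta$ closed makes $\theta+\ddc\f$ closed automatically.
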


\begin{proof}
The implication (i)$\Rightarrow$ (ii) follows from Proposition \ref{pro:bmp}.
The existence of a closed positive $(1,1)$-current $T=\theta+dd^c \f$ such that $\int_X T^n>0$
is equivalent to $\vol(\{\theta\})>0$, by definition and Example \ref{exam:vclosed}.
By Theorem~\ref{thm:bigvol}, this implies that $\{\theta\}$ is big, and it then well-known (see for instance Proposition~\ref{prop:Fuj}) that $X$ admits a K\"ahler modification, and hence is Fujiki. 
\end{proof}

%
\section{Monge-Amp\`ere equations in big classes} \label{sec:solvMA}
%
In this section, we no longer asssume that $X$ has the bounded mass property. We pick a big class $\{\theta\}\in\BC^{1,1}(X,\R)$, fix a $\theta$-psh function 
 $\rho\le 0$ with analytic singularities such that $\theta+dd^c \rho \geq\omega_X$ (after rescaling $\om_X$), and denote by $\Omega\subset X$ the Zariski open set where $\rho$ is smooth.  We also fix a (smooth, positive) volume form $dV$ on $X$, and denote by 
 $$
 \|f\|_p:=\left(\int_X|f|^p dV\right)^{1/p}
 $$
 the $L^p$-norm of a measurable function $f$. We are going to solve complex Monge--Amp\`ere equations in $\Omega$,
 extending \cite[Theorem B]{BEGZ10} to the Hermitian setting.
 %
 \subsection{The domination principle}


We start by establishing a useful local maximum principle.

\begin{lem}\label{lem: CP via envelopes}
  Let $U\Subset \Omega$ be an open subset that is biholomorphic to a ball in $\mathbb C^n$. Let $u,v$ be bounded $\theta$-psh functions in $U$ such that $\liminf_{z\to \partial U}(u-v)(z) \geq 0$. If $(\theta+dd^c u)^n\leq (\theta+dd^c v)^n$ in $\{u<v\}$, then $u\geq v$ in $U$. 
\end{lem}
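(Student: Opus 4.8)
The plan is to reduce to the case where the background form is Hermitian, and then run the usual comparison/domination argument via envelopes, the only extra work being to keep track of the terms produced by the non-closedness of $\theta$.

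\emph{Reduction to a Hermitian form.} Since $U\Subset\Omega$, the $\theta$-psh function $\rho$ is smooth and bounded on a neighbourhood of $\overline U$, and there $\omega:=\theta+\ddc\rho$ is a Hermitian form. For any function $h$ on $U$ one has the identity $\theta+\ddc h=\omega+\ddc(h-\rho)$. Hence $h$ is $\theta$-psh (resp.\ bounded) if and only if $h-\rho$ is $\omega$-psh (resp.\ bounded), $(\theta+\ddc h)^n=(\omega+\ddc(h-\rho))^n$ as Bedford--Taylor measures, and both $\{u<v\}$ and $\liminf_{z\to\partial U}(u-v)(z)$ are left unchanged when one subtracts $\rho$ from $u$ and $v$. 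Replacing $(\theta,u,v)$ by $(\omega,u-\rho,v-\rho)$, we may therefore assume from now on that $\theta$ is itself a Hermitian form on $U$; in particular strictly $\theta$-psh competitors are at our disposal.

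\emph{The envelope argument.} Argue by contradiction, assuming $u\not\ge v$. It suffices to prove $u\ge v-\eta$ for all small $\eta>0$, and since the level sets $\{u-v=-\eta\}$ are pairwise disjoint we may pick $\eta$ so that $\{u=v-\eta\}$ carries none of the finite measures $\omega^n$, $(\theta+\ddc u)^n$, $(\theta+\ddc v)^n$; replacing $v$ by $v-\eta$ we may thus assume in addition that $u>v$ near $\partial U$ and that $\{u=v\}$ is null for these measures. For $\d>0$ set
$$
w_\d:=\env_\theta\big(\min\{\max\{u,v\},\,u+\d\}\big)\quad\text{on }U .
$$
As $u$ is a $\theta$-psh competitor lying below the bracketed function, $w_\d$ is $\theta$-psh with $u\le w_\d\le u+\d$, and $w_\d=u$ near $\partial U$. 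By the balayage property (the local form of Theorem~\ref{thm:envelope}, valid since $w_\d$ is bounded), the measure $(\theta+\ddc w_\d)^n$ is carried by $\{w_\d=\min\{\max\{u,v\},u+\d\}\}$; combining this with the local minimum principle (Corollary~\ref{cor:minprinciple} applied to $\max\{u,v\}$ and $u+\d$), with plurifine locality of the Monge--Amp\`ere operator (Proposition~\ref{prop:prodbd}), with the maximum principle (Proposition~\ref{prop:maxprinciple}), and with the hypothesis $(\theta+\ddc u)^n\le(\theta+\ddc v)^n$ on $\{u<v\}$, one bounds $\int_U(\theta+\ddc w_\d)^n$ from above by $\int_U(\theta+\ddc u)^n$ plus a term carried by $\{0<v-u<\d\}$, which tends to $0$ as $\d\to0$. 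On the other hand, since $w_\d=u$ near $\partial U$ and $\|w_\d-u\|_{L^\infty(U)}\le\d$, Stokes' theorem together with Chern--Levine--Nirenberg estimates (exactly as in the proof of Theorem~\ref{thm:monomass}, the defect being $\int_U(w_\d-u)\,\ddc S$ for a positive $(n-1,n-1)$-current $S$ whose $\ddc$-image has total mass bounded by $C=C(\theta,\omega_X,\|u\|_{L^\infty(U)},\|v\|_{L^\infty(U)})$) give $\big|\int_U(\theta+\ddc w_\d)^n-\int_U(\theta+\ddc u)^n\big|\le C\d$. Letting $\d\to0$ produces the comparison inequality $\int_{\{u<v\}}(\theta+\ddc v)^n\le\int_{\{u<v\}}(\theta+\ddc u)^n$, which combined with the hypothesis forces $(\theta+\ddc u)^n=(\theta+\ddc v)^n$ on $\{u<v\}$; running the construction once more with the strictly $\theta$-psh perturbation $v_\e:=(1-\e)v-\e\|v\|_{L^\infty(U)}$ in place of $v$ (for which $\theta+\ddc v_\e\ge\e\theta>0$) then upgrades this to the conclusion that $\{u<v\}$ is empty, a contradiction.

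\emph{Main obstacle.} The heart of the matter is the failure of Stokes' theorem for the non-closed form $\theta$: one no longer has $\int_U(\theta+\ddc w)^n=\int_U(\theta+\ddc u)^n$ when $w$ agrees with $u$ near $\partial U$, but only up to an error proportional to $\|w-u\|_{L^\infty(U)}$ times bounded Monge--Amp\`ere masses. This is precisely why one introduces the $\d$-capped envelope $w_\d$, whose only role is to keep this defect of size $O(\d)$ while still detecting the set $\{u<v\}$; all the remaining steps --- the contact-set calculus for $\max$, the passage to the limit, the nondegeneracy of the perturbed data --- are handled as in the K\"ahler case, using that plurifine locality (Proposition~\ref{prop:prodbd}) is insensitive to $d\theta$, together with the generic choice of $\eta$ and the perturbation $v_\e$.
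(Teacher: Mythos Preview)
Your argument has a genuine gap at the step ``letting $\d\to 0$ produces the comparison inequality $\int_{\{u<v\}}(\theta+\ddc v)^n\le\int_{\{u<v\}}(\theta+\ddc u)^n$''. The two bounds you actually derive are
\[
\int_U(\theta+\ddc w_\d)^n\le\int_U(\theta+\ddc u)^n+\int_{\{0<v-u<\d\}}(\theta+\ddc v)^n
\]
from the minimum principle, and
\[
\left|\int_U(\theta+\ddc w_\d)^n-\int_U(\theta+\ddc u)^n\right|\le C\d
\]
from Stokes with error. Combining them yields only $-C\d\le\int_{\{0<v-u<\d\}}(\theta+\ddc v)^n$, which is trivial. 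Neither estimate sees $(\theta+\ddc v)^n$ on the bulk of $\{u<v\}$, and this is inherent to your construction: on $\{v>u+\d\}$ the obstacle $\min\{\max\{u,v\},u+\d\}$ equals $u+\d$, so the envelope $w_\d$ has no contact with $v$ there, and no lower bound on $(\theta+\ddc w_\d)^n$ in terms of $(\theta+\ddc v)^n$ is available. To get such a lower bound via the maximum principle you would have to work with $\max\{u,v\}$ itself, but then the Stokes defect is of order $\|\max\{u,v\}-u\|_{L^\infty}=\sup_U(v-u)_+$, not $\d$, and the argument collapses. The final perturbation step with $v_\e$ would also need more care: the hypothesis on $(\theta+\ddc u)^n$ is relative to $v$, not $v_\e$.

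The paper avoids the Stokes-error issue altogether by a different mechanism. It sets $v_\e:=\max\{u,v-\e\}$, observes from the maximum principle and the hypothesis that $(\theta+\ddc u)^n\le(\theta+\ddc v_\e)^n$ on all of $U$, and then takes the \emph{plurisubharmonic} (not $\theta$-psh) envelope $\varphi:=\env^\star(u-v_\e)\le 0$ on the ball $U$. On the contact set $D=\{\varphi+v_\e=u\}$ the second part of Proposition~\ref{prop:maxprinciple} gives $\one_D(\theta+\ddc\varphi+\ddc v_\e)^n\le\one_D(\theta+\ddc u)^n$; since $\ddc\varphi\ge 0$, expanding shows $\one_D(\ddc\varphi)^n=0$, and as $(\ddc\varphi)^n$ is supported on $D$ one gets $(\ddc\varphi)^n\equiv 0$ with $\varphi|_{\partial U}=0$. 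The classical Bedford--Taylor comparison principle for bounded psh functions on a ball then forces $\varphi\equiv 0$, hence $u\ge v_\e\ge v-\e$. No integration over $U$, no $d\theta$-defect.
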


\begin{proof}
    Fix $\e>0$ and set $v_{\e}=\max\{u,v-\e\}$.  Then $u=v_{\e}$ near the boundary of $U$, and the Bedford--Taylor maximum principle (compare Proposition~\ref{prop:maxprinciple}) thus yields
     $$
    (\theta+dd^c v_{\e})^n \geq\one_{\{v\ge u+\e\}}(\theta+dd^c v)^n+\one_{\{u>v-\e\}}(\theta+dd^c u)^n
    $$
    on $U$. Since $(\theta+dd^c v)^n\ge (\theta+\ddc u)^n$ on $\{v\ge u+\e\}\subset\{v>u\}$, we infer
    $$
    (\theta+dd^c u)^n\le (\theta+dd^c v)^n. 
    $$ 
Let $\varphi:=\env^\star(u-v_{\e})$ denote the largest psh function on $U$ lying below $u-v_{\e}$ outside a pluripolar set, and 
    let $h$ be a defining psh function for $U$: $h=0$ on $\partial U$ and $h<0$ in $U$. 
    Since $u-v_{\e}\leq 0$ with equality near $\partial U$, we can find a large constant $A>0$ such that 
    $Ah\le u-v_{\e}$. Thus $Ah \leq \f$, hence $\varphi$ vanishes on $\partial U$. Since $\varphi+ v_{\e}\leq u$, and the two functions are $\theta$-psh, by the maximum principle we have 
    \[ 
   {\bf 1}_D (\theta + dd^c \varphi + dd^c v_{\e})^n  \leq {\bf 1}_D (\theta+dd^c u)^n,
    \]
    where $D=\{\varphi+ v_{\e} = u\}$ is the contact set. 
    Using 
    \[
    (\theta +dd^c u)^n \leq (\theta+dd^c v_{\e})^n \leq  (\theta + dd^c \varphi + dd^c v_{\e})^n,
    \] 
    we obtain 
    ${\bf 1}_D (dd^c \varphi)^n=0$. 
   Since $(dd^c \varphi)^n$ is supported on $D$, we infer that $(dd^c \varphi)^n=0$, and the 
    Bedford--Taylor comparison principle ensures that $\varphi=0$. 
    Thus $u\geq v_\e=\max\{u,v-\e\}$ outside a pluripolar set, and hence everywhere, and letting $\e\to 0$ we conclude that $u\geq v$. 
\end{proof}

\begin{lem}
	Let $u,v$ be $\theta$-psh functions on $X$ such that $\rho \leq \min\{u,v\}$ and 
	 $v\leq u+C$ for some constant $C$.
		If 
	\[
	{\bf 1}_{\{u<v\}}(\theta+dd^c u)^n   \leq c {\bf 1}_{\{u<v\}}(\theta+dd^c v)^n, 
	\]
	 for some $c\in [0,1)$, then $u\geq v$. 
\end{lem}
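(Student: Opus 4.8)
Since $X\setminus\Omega=\{\rho=-\infty\}$ is pluripolar and two $\theta$-psh functions comparable outside a pluripolar set are comparable everywhere, it suffices to prove $u\ge v$ on $\Omega$, i.e.\ that the plurifine-open set $D:=\{u<v\}$ carries no nonpluripolar mass. I would first reduce to the case where $(\theta+dd^c v)^n$ has strictly positive density: replacing $v$ by $v_\e:=(1-\e)v+\e\rho\le v$ only shrinks $D$, and by positivity of Bedford--Taylor mixed products together with $\theta+dd^c\rho\ge\omega_X$ one has $(\theta+dd^c v_\e)^n\ge(1-\e)^n(\theta+dd^c v)^n$ and $(\theta+dd^c v_\e)^n\ge\e^n\omega_X^n$; hence the hypothesis persists on $\{u<v_\e\}$ with $c$ replaced by $c(1-\e)^{-n}<1$ once $\e$ is small, while $v_\e\ge\rho$, $v_\e\le u+C$, and $v_\e\nearrow v$ as $\e\to0$, so it is enough to prove $u\ge v_\e$ for all small $\e$. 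Thus I may assume $(\theta+dd^c v)^n\ge\e_0^n\,\omega_X^n$ for a fixed $\e_0>0$.

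Now I would bring in the rooftop envelope $\psi:=\env_\theta(\min\{u,v\})$. Since $\rho\le\min\{u,v\}$ it is not $\equiv-\infty$, $\min\{u,v\}$ is usc, and $\min\{u,v\}\ge v-C$ (using $v\le u+C$); hence $\psi\in\PSH(\theta)$ with $v-C\le\psi\le\min\{u,v\}$, so $\psi$ has the singularities of $v$ and is locally bounded on $\Omega$ — all the Monge--Amp\`ere measures below are then classical Bedford--Taylor objects on $\Omega$, no bounded mass property needed. By Theorem~\ref{thm:envelope}, $(\theta+dd^c\psi)^n$ vanishes on $\{\psi<\min\{u,v\}\}$, and combining this with Corollary~\ref{cor:minprinciple} (equivalently Proposition~\ref{prop:maxprinciple}) gives $(\theta+dd^c\psi)^n\le\one_{\{\psi=v\}}(\theta+dd^cv)^n+\one_{\{\psi=u<v\}}(\theta+dd^cu)^n$. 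On $\{\psi=u<v\}\subseteq D$ the hypothesis yields $(\theta+dd^cu)^n\le c\,(\theta+dd^cv)^n$, whence
$$(\theta+dd^c\psi)^n\le\one_{\{\psi=v\}}(\theta+dd^cv)^n+c\,\one_{\{\psi=u<v\}}(\theta+dd^cv)^n,$$
the sets $\{\psi=v\}$ and $\{\psi=u<v\}$ being disjoint with union $\{\psi=\min\{u,v\}\}$.

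The crux is a mass comparison. First one checks that all masses involved are finite, using that $\{\theta\}$ is big and $\psi,v\ge\rho$: by Demailly regularization (Theorem~\ref{thm:Demreg}) and pull-back to a log resolution of $\rho$ the class becomes $\beta+[E]$ with $\beta$ Hermitian and $E$ an effective divisor, which reduces finiteness to the smooth-form case. Granting moreover that $\int_X(\theta+dd^c\psi)^n\ge\int_X(\theta+dd^cv)^n$, integrate the displayed inequality over $X=\{\psi=v\}\sqcup\{\psi=u<v\}\sqcup\{\psi<\min\{u,v\}\}$: because $c<1$, this forces $(\theta+dd^cv)^n(\{\psi=u<v\})=0$ and $(\theta+dd^cv)^n(\{\psi<\min\{u,v\}\})=0$; since $D\subseteq\{\psi=u<v\}\cup\{\psi<\min\{u,v\}\}$ we get $(\theta+dd^cv)^n(D)=0$, and then $(\theta+dd^cu)^n(D)=0$ by the hypothesis.

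To conclude, I would feed $(\theta+dd^cu)^n(D)=0$ back into the displayed inequality: its right-hand side collapses to $\one_{\{\psi=v\}}(\theta+dd^cv)^n$, so $(\theta+dd^c\psi)^n\le(\theta+dd^cv)^n$ with $(\theta+dd^c\psi)^n$ carried by $\{\psi=v\}$; combined with $\int_X(\theta+dd^c\psi)^n\ge\int_X(\theta+dd^cv)^n$ this gives $(\theta+dd^c\psi)^n=(\theta+dd^cv)^n$, a measure carried by $\{\psi=v\}$. But by the first reduction this measure dominates $\e_0^n\omega_X^n$, so $\{\psi<v\}$ is Lebesgue-negligible; as $\psi\le v$ are both $\theta$-psh this forces $\psi=v$, hence $v=\psi\le u$ on $\Omega$, and we are done on $X$. (Alternatively, once both $(\theta+dd^cu)^n$ and $(\theta+dd^cv)^n$ are known to vanish on $D$, one may finish by applying the local comparison principle, Lemma~\ref{lem: CP via envelopes}, on coordinate balls exhausting $\Omega$.) The step I expect to be the genuine obstacle is the mass comparison: in the non-K\"ahler, non-closed setting equivalent singularities do \emph{not} force equality of Monge--Amp\`ere masses — this is precisely why $\lvol$ and $\uvol$ differ — so the finiteness of the masses without the bounded mass property, and above all the inequality $\int_X(\theta+dd^c\psi)^n\ge\int_X(\theta+dd^cv)^n$, have to be argued carefully, presumably by exploiting that $\psi$ is the specific envelope $\env_\theta(\min\{u,v\})$ rather than a general $\theta$-psh function.
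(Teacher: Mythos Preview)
Your global approach via the rooftop envelope $\psi=\env_\theta(\min\{u,v\})$ is the natural strategy in the K\"ahler setting, but the gap you yourself flag is fatal here and cannot be filled. The inequality $\int_X(\theta+dd^c\psi)^n\ge\int_X(\theta+dd^c v)^n$ is precisely what fails for non-closed $\theta$: the whole point of the quantity $\D_\theta$ in Theorem~\ref{thm:monomass} is that functions with equivalent singularities can have different Monge--Amp\`ere masses, with no preferred direction for the inequality. There is nothing special about $\psi$ being an envelope that recovers this; the orthogonality relation (Theorem~\ref{thm:envelope}) and the minimum principle (Corollary~\ref{cor:minprinciple}) control where the measure sits, not how much there is. Your finiteness argument via a log resolution is also incomplete: on the resolution, $\pi^\star v$ is only bounded below by $\pi^\star\rho$, which has log poles along the exceptional divisor, so you do not land in the bounded-potential case, and the section explicitly dispenses with the bounded mass property. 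Since your entire chain of implications---vanishing of $(\theta+dd^cv)^n$ on $D$, then of $(\theta+dd^cu)^n$, then $\psi=v$---rests on this mass comparison, the argument does not close, and neither does your alternative ending via Lemma~\ref{lem: CP via envelopes}, which presupposes the same vanishing.

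The paper's proof sidesteps global masses entirely. After the same convexification $v_\e=(1-\e)v+\e\rho$ (your first reduction, which is fine), it argues by contradiction that $m:=\inf_\Omega(u-v_\e)<0$. The point is that this infimum is actually \emph{attained} in $\Omega$: one first tweaks $\rho$ so that $u-\rho\to+\infty$ at $\partial\Omega$, which forces $u-v_\e\ge\e(u-\rho)+(1-\e)(u-v)\to+\infty$ there. Near an interior minimizing point one then works in a single coordinate ball $B\Subset\Omega$, perturbs $v_\e$ by a small local bump $\e\eta$ with $\tfrac12\om_X+dd^c\eta\ge 0$ (absorbed by the $\e(\theta+dd^c\rho)\ge\e\om_X$ term), and applies Lemma~\ref{lem: CP via envelopes} on $B$ to the pair $u$ and $\varphi:=v_\e+\e\eta+m$. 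This yields $u\ge\varphi$ on $B$, contradicting $\eta(x_0)>0$ at the minimizing point. Everything is local Bedford--Taylor theory on a ball; no integration over $X$, hence no need for finite or comparable total masses.
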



This domination principle contains a great deal of information.
While the statement is similar to \cite[Proposition 1.12]{GL23}, the proof 
we provide here is very different.

\begin{proof}
	Fix $\e\in (0,1)$ such that $(1-\e)^n>c$ and set $v_{\e}:= (1-\e)v + \e \rho$. Since $\rho$ satisfies $\theta +dd^c \rho \geq \omega_X$, we have 
$$\theta +dd^c \lambda \rho = \lambda (\theta+dd^c \rho) + (1-\lambda)\theta \geq \omega_X +(1-\lambda) \theta.$$
Thus, when $|\lambda-1|$ is sufficiently small we have $\theta +dd^c \lambda \rho \geq \omega_X/2$.  Replacing $\rho$ with $\lambda \rho -1$ for some $\lambda>1$ we can assume 
	\[
	\lim_{x\to \partial \Omega} (u-\rho)=+\infty\; \text{and}\; \rho -v \leq -1.
	\]
We are going to show that $u\geq v_{\e}$; the result then follows by letting $\e\to 0$. 
Assume by contradiction that 
	\[
	\inf_{\Omega} (u -v_{\e}) = m<0. 
	\]
	Let $(x_j)$ be a sequence in $\Omega$ converging to $x_0\in \Omega$ such that $\lim_j (u-v_{\e})(x_j)=m$. Take a holomorphic coordinate chart $(B,z)$ around $x_0$ so that $B=\{|z|<1\}\Subset \Omega$, and define 
	$
	\eta =-a|z|^2+a,
	$
	where the constant $a>0$ is so small that $\frac{\om_X}{2}+dd^c \eta\geq 0$. Setting
	\[
	\varphi := v_{\e} + \e \eta + m,
	\]
	we get  $\liminf_{x\to \partial B}(u-\varphi) \geq 0$, 
	\[
	\theta+dd^c \varphi = (1-\e)(\theta+dd^c v) + \e (\theta+dd^c\rho + dd^c \eta)\ge 0
	\] 
	and 
	$$
	(\theta+dd^c u)^n \leq c (\theta+dd^c v)^n\leq  (1-\e)^n (\theta+dd^c v)^n\leq  (\theta+dd^c\f)^n\; \text{on}\; \{u<\varphi\}
	$$
	because 
	\[
	\{u<\varphi\} \subset \{u< v + \e (\rho-v+\eta)+ m\} \subset \{u< v+m\}\subset \{u< v\},
	\]
	if $a$ is small enough.  Lemma \ref{lem: CP via envelopes} yields $u\geq \varphi$, and evaluating at $x_j$ we obtain 
	\[
	u(x_j) - v_{\e}(x_j) \geq  \e \eta (x_j) + m,
	\]
	which yields a contradiction as $j\to +\infty$, since $\eta(x_0)>0$. 
\end{proof}

As a simple consequence, we get: 
\begin{cor}\label{cor:MAcst}
	Assume $u,v \in \PSH(\theta)$ are such that $u-v$ is bounded, $\min\{u,v\}\geq \rho$, and 
	$$(\theta+dd^c u)^n \leq c(\theta+dd^c v)^n,$$
for some $c\geq 0$. Then $c\geq 1$. 
\end{cor}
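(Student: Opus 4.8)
The plan is a short argument by contradiction, combining the domination principle established just above with the translation invariance of the Monge--Amp\`ere operator. Suppose, for contradiction, that $c<1$. Since $u-v$ is bounded, fix $C_0\ge 0$ with $|u-v|\le C_0$, so that in particular $u\le v+C_0$ and $v\le u+C_0$ throughout $X$. Choose a real number $a>C_0$ and set $v':=v+a$; since $v'$ differs from $v$ by a constant we have $dd^c v'=dd^c v$, hence $(\theta+dd^c v')^n=(\theta+dd^c v)^n$.

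Next I would check that the pair $(u,v')$ satisfies the hypotheses of the domination principle. The assumption $\rho\le\min\{u,v\}$ gives $\rho\le u$ and $\rho\le v\le v'$, so $\rho\le\min\{u,v'\}$. Also $v'=v+a\le u+(C_0+a)$, so $v'\le u+C$ with $C:=C_0+a$. Finally, multiplying the hypothesis $(\theta+dd^c u)^n\le c\,(\theta+dd^c v)^n$ by $\one_{\{u<v'\}}$ and using $(\theta+dd^c v')^n=(\theta+dd^c v)^n$ yields $\one_{\{u<v'\}}(\theta+dd^c u)^n\le c\,\one_{\{u<v'\}}(\theta+dd^c v')^n$ with $c\in[0,1)$. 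The domination principle then forces $u\ge v'=v+a$ on $X$, i.e. $u-v\ge a>C_0$, contradicting $u-v\le C_0$. Hence $c\ge 1$.

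I do not expect a real obstacle here. The only point worth flagging is that one should not apply the domination principle to the pair $(u,v)$ directly, as that would only return $u\ge v$, which is not contradictory by itself; the contradiction is extracted only after translating $v$ upward by a constant larger than the oscillation bound, so that the boundedness of $u-v$ becomes incompatible with the forced inequality $u\ge v+a$. Translating $v$ \emph{up} rather than down is also what preserves the constraint $\min\{u,v'\}\ge\rho$ required to invoke the principle.
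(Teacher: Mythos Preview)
Your argument is correct and is precisely the kind of short deduction the paper has in mind: the corollary is stated without proof as ``a simple consequence'' of the domination principle, and your translation trick $v\mapsto v+a$ with $a>C_0$ is the natural way to extract a contradiction from $c<1$. There is nothing to add.
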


%

\subsection{Solving complex Monge--Amp\`ere equations}

A crucial step in solving complex Monge--Amp\`ere equations is to establish uniform a priori estimates. 
Following \cite{GL23} we construct suitable subsolutions to bound the solutions via the domination principle.

\begin{lem}\label{lem: subsolution}
	Let $\eta \in \PSH(X,\theta)$ such that $0\geq \eta \geq \rho$. Let $g\geq 0$ be a measurable function on $X$ such that $\|g\|_q\leq 1$, for some exponent $q>1$. Then there exists $v\in \PSH(X,\theta)$ with $\eta\leq v \leq \eta+1$ such that
	\[
	(\theta+dd^c v)^n \geq m gdV
	\]
	on $\Omega$, where $m$ is a positive constant only depending on $q$, $(X,\omega_X)$, and $\theta$. 
\end{lem}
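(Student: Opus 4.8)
The plan is to produce $v$ as a glued maximum of $\eta$ with local subsolutions, each supported in a small coordinate ball and carrying the prescribed Monge--Amp\`ere mass. All Bedford--Taylor products below are taken in the classical local sense on $\Om$, where $v$ will be locally bounded (since $\eta\le v\le\eta+1$ and $\rho\le\eta\le0$ with $\rho$ smooth on $\Om$), so the absence of the bounded mass property is harmless. Since $\|g\|_q\le1$ and the claimed inequality is homogeneous in $g$, I may assume $\|g\|_q=1$ (the case $g\equiv0$ being trivial, with $v=\eta$). Fix a finite covering of $X$ by coordinate balls $B_j=B(p_j,r_j)\Subset B_j''=B(p_j,\tfrac32 r_j)\Subset B_j'=B(p_j,2r_j)$, together with constants $c_j>0$ such that $\om_X\ge c_j\,\ddc|z-p_j|^2$ on $B_j'$, arranged so that the $B_j$ with $\overline{B_j'}\subset\Om$ already cover a fixed compact subset of $\Om$. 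For a parameter $\la\in(0,1]$ to be fixed, Ko\l odziej's solution of the local Dirichlet problem for the Monge--Amp\`ere operator with $L^q$ density (see~\cite{Kol98},~\cite[Ch.~4]{GZbook}) provides, for each such $j$, a continuous psh function $w_j$ on $B_j'$ with $w_j=0$ on $\partial B_j'$, $(\ddc w_j)^n=\la^n g\,dV$ in $B_j'$, and $-\kappa\la\le w_j\le0$, with $\kappa$ depending only on $n$, $q$ and the covering.

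To turn $w_j$ into a $\theta$-psh subsolution I would exploit the positivity reserve carried by $\rho$: for $t\in(0,1)$ one has $(1-t)\eta+t\rho\in\PSH(\theta)$ with $\theta+\ddc\big((1-t)\eta+t\rho\big)\ge t\om_X\ge tc_j\,\ddc|z-p_j|^2$ on $B_j'$, which leaves room to add a concave quadratic bump that is positive on $B_j''$ (hence on $B_j$) and negative on $B_j'\setminus B_j''$, for instance $b_j:=tc_j\big(\tfrac94 r_j^2-|z-p_j|^2\big)$. Set, on $B_j'$,
$$
\psi_j:=(1-t)\eta+t\rho+w_j+b_j .
$$
Then $\theta+\ddc\psi_j\ge\ddc w_j\ge0$, so $\psi_j\in\PSH(B_j',\theta)$, and, all functions being locally bounded on $B_j'\subset\Om$, $(\theta+\ddc\psi_j)^n\ge(\ddc w_j)^n=\la^n g\,dV$. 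Since $\rho\le\eta$, $w_j\le0$ and $b_j<0$ on $B_j'\setminus B_j''$, one gets $\psi_j<\eta$ there; since $(1-t)\eta+t\rho\le\eta$, $w_j\le0$ and $\sup b_j=\tfrac94 tc_jr_j^2$, one gets $\psi_j\le\eta+\tfrac94 tc_jr_j^2\le\eta+1$ for $t$ small; and on $B_j$ one has $b_j\ge\tfrac54 tc_jr_j^2$, while $\eta-\rho\le-\inf_{B_j}\rho=:M_j$ on $B_j$ uniformly in $\eta$ (because $\eta\le0$), so $\psi_j-\eta\ge t\big(\tfrac54 c_jr_j^2-M_j\big)-\kappa\la\ge0$ provided $\tfrac54 c_jr_j^2>M_j$ and $\la$ is small relative to $t$.

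When these conditions hold, gluing the $\psi_j$ together, $v:=\max\{\eta,\psi_1,\dots\}$ (each $\psi_j$ being discarded outside $B_j'$, and lying strictly below $\eta$ near $\partial B_j'$) belongs to $\PSH(X,\theta)$, satisfies $\eta\le v\le\eta+1$, and on each $B_j$ equals some $\psi_k$; the Bedford--Taylor comparison on contact sets (as in Proposition~\ref{prop:maxprinciple}) then gives $(\theta+\ddc v)^n\ge(\theta+\ddc\psi_k)^n\ge\la^n g\,dV$ on the level set $\{v=\psi_k\}$, hence $(\theta+\ddc v)^n\ge\la^n g\,dV$ on $\bigcup_j B_j$.

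The step I expect to be the main obstacle is securing this with a constant $m=\la^n$ that is uniform near the polar locus $Z:=X\setminus\Om=\{\rho=-\infty\}$. The condition $\tfrac54 c_jr_j^2>M_j$ forces $B_j$ to be large compared with $-\inf_{B_j}\rho$, which is incompatible with $\overline{B_j'}\subset\Om$ once $B_j$ is close to $Z$: there $\rho$ is unbounded below while the balls must shrink, and if $\eta$ is far less singular than $\rho$ near $Z$ then $\eta-\rho$ is unbounded on any ball meeting $Z$, so the bump above cannot dominate $\eta$ there with a fixed $(t,\la)$. Overcoming this is exactly the delicate point, and is where one follows~\cite{GL23}: one works, over a neighbourhood of $Z$, on a modification $\pi\colon Y\to X$ on which $\pi^\star\rho$ has divisorial singularities, so that $\pi^\star(\theta+\ddc\rho)=\b+[D]$ with $\b$ smooth, $\b\ge\pi^\star\om_X$, and $D$ an effective $\Q$-divisor supported on $\pi^{-1}(Z)$, builds local subsolutions there relative to $\b$ — using $\pi^\star\eta\ge\pi^\star\rho$ to absorb the divisorial contribution $[D]$ and the positivity of $\b$ away from $\pi^{-1}(Z)$ to keep the estimates uniform — and pushes the outcome forward along $\pi$ (an isomorphism over $\Om$). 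Merging this with the construction of the previous paragraphs on the fixed compact subset of $\Om$ yields $v\in\PSH(X,\theta)$ with $\eta\le v\le\eta+1$ and $(\theta+\ddc v)^n\ge m\,g\,dV$ on $\Om$, with $m>0$ depending only on $q$, $(X,\om_X)$ and $\theta$.
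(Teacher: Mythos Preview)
Your local gluing approach is fundamentally different from the paper's, and the part you flag as ``the main obstacle'' is a genuine gap that your proposed resolution argument does not close. The difficulty is not merely that the balls must shrink near $Z$: it is that the constraint $\psi_j\ge\eta$ on $B_j$ forces the bump $b_j$ to dominate $t(\eta-\rho)$, and since $\eta$ may be \emph{bounded} on $X$ (take $\eta=V_\theta$ when $\{\theta\}$ is Hermitian) while $\rho\to-\infty$ at $Z$, the gap $\eta-\rho$ is unbounded on any neighbourhood of $Z$. Passing to a modification $\pi\colon Y\to X$ does not help: $\pi^\star\eta$ is then bounded on $Y$ while $\pi^\star\rho$ still has a divisorial pole along $\pi^{-1}(Z)$, so $\pi^\star\eta-\pi^\star\rho$ is equally unbounded there, and any $\b$-psh subsolution built from $\pi^\star\rho$ near the exceptional divisor will lie far below $\pi^\star\eta$. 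Your phrase ``using $\pi^\star\eta\ge\pi^\star\rho$ to absorb the divisorial contribution'' points in the wrong direction: that inequality is exactly what makes $\pi^\star\eta$ too large to be dominated by a perturbation of $\pi^\star\rho$. Moreover $\b\ge\pi^\star\om_X$ is only semipositive along $\pi^{-1}(Z)$, so the Ko\l odziej estimate degenerates precisely where you need it.

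The paper bypasses all of this with a short global trick. One first weights the density, setting $h:=g(2-\rho)^{2n}$, which lies in $L^r$ for some $r\in(1,q)$ since $\rho$ has analytic singularities, and invokes~\cite[Lemma~3.3]{GL23} to get $u\in\PSH(\om_X)$ with $-1\le u\le 0$ and $(\om_X+\ddc u)^n\ge m h\,dV$; then $u_1:=\rho+u\in\PSH(\theta)$ satisfies $(\theta+\ddc u_1)^n\ge m h\,dV$. The key step is the explicit formula
\[
v:=\eta+\frac{1}{1+\eta-u_1},
\]
which automatically satisfies $\eta\le v\le\eta+1$ since $1\le 1+\eta-u_1\le 2-\rho$, and a direct computation gives $\theta+\ddc v\ge(\theta+\ddc u_1)/(2-\rho)^2$, hence $(\theta+\ddc v)^n\ge m h\,dV/(2-\rho)^{2n}=m g\,dV$. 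The weight $(2-\rho)^{2n}$ is precisely what compensates for the unboundedness of $\eta-\rho$ near $Z$, with no gluing and no resolution required.
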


\begin{proof} Set $h:=g (2-\rho)^{2n}$. By H\"older's inequality, we have $\|h\|_r\leq C$ for some $r\in (1,q)$ and a uniform constant $C>0$, and~\cite[Lemma 3.3]{GL23} thus yields $u\in \PSH(X,\omega_X)$ such that $-1\leq u\leq 0$, and 
	\[
	(\omega_X+dd^c u)^n \geq  m hdV
	\]
	with $m>0$ uniformly bounded away from $0$.  
	Set $u_1:= \rho+u$. Since we assume $\theta+dd^c\rho\ge\om_X$, we get $\theta+dd^c u_1\ge\om_X+dd^c u\ge 0$, thus $u_1$ is $\theta$-psh and satisfies $(\theta+dd^c u_1)^n \geq m hdV$. 
	
	\smallskip
	
	Consider now $v := \eta+\frac{1}{1+\eta-u_1}$. Observe that 
	$1 \leq 1+\eta-u_1=1-u+\eta-\rho  \leq 2-\rho$ and
	\begin{eqnarray*}
	\theta+dd^c v
	&=& \theta+dd^c \eta -\frac{dd^c (\eta-u_1)}{(1+\eta-u_1)^2}+\frac{2 d(\eta-u_1) \wedge d^c(\eta-u_1)}{(1+\eta-u_1)^3} \\
	&\geq & \left[1- \frac{1}{(1+\eta-u_1)^2}\right](\theta+dd^c \eta) +\frac{\theta+dd^c u_1}{(1+\eta-u_1)^2} \\
	& \geq & \frac{\theta+dd^c u_1}{(2-\rho)^2}.
	\end{eqnarray*}
	The function  $v$ is thus $\theta$-psh, with $\eta\leq v\leq \eta+1$ and 
	\[
	(\theta+dd^c v)^n \geq \frac{(\theta+dd^c u_1)^n}{(2-\rho)^{2n}} = m gdV. 
	\]
\end{proof}

We now show that one can solve  Monge-Amp\`ere equations with respect to a  big form $\theta$. We first start with the equation twisted by an exponential. 

\begin{thm}\label{thm:MAAY}
Assume $0 \leq f \in L^p(X)$ with $p>1$ and $\|f\|_p>0$. 
For each $\lambda >0$, we can then find a unique 
$\f_{\lambda} \in PSH(X,\theta)$ such that $\f_\la=V_\theta+O(1)$ and 
$$
\left(\theta+dd^c \f_{\lambda}\right)^n=e^{\lambda \f_{\lambda}} fdV
$$
on $\Om$. Furthermore, $\sup_X|\f_\la-V_\theta|\le C$ for a constant $C>0$ only depending on $(X,\omega_X)$, $\theta$, $p$, $\|f\|_p$ and $\lambda$. 
\end{thm}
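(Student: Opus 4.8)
The plan is as follows. We normalise $\|f\|_p=1$ (the dependence on $\|f\|_p$ is restored at the end), and keep $\rho\le 0$ with $\theta+\ddc\rho\ge\om_X$ and $V_\theta$ as in the statement, arranged after rescaling $\om_X$ and translating $\rho$ so that $\rho\le V_\theta\le 0$. \emph{Uniqueness} is immediate from the domination principle: if $\f_1,\f_2\in\PSH(\theta)$ both solve the equation with $\f_i=V_\theta+O(1)$, then, after replacing $\rho$ by $\la_0\rho-C_0$ for suitable $\la_0>1,\ C_0>0$ (which preserves $\theta+\ddc\rho\ge\tfrac12\om_X$, exactly as in the proof of the domination principle), we may assume $\rho\le\min\{\f_1,\f_2\}$; on the plurifine open set $\{\f_1<\f_2-\e\}$ one has
$$
(\theta+\ddc\f_1)^n=e^{\la\f_1}f\,dV\le e^{-\la\e}e^{\la\f_2}f\,dV=e^{-\la\e}(\theta+\ddc(\f_2-\e))^n,
$$
so the domination principle (the Lemma preceding Corollary~\ref{cor:MAcst}, applied with $c=e^{-\la\e}<1$) gives $\f_1\ge\f_2-\e$, and letting $\e\to 0$ and exchanging $\f_1,\f_2$ yields $\f_1=\f_2$. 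The same mechanism, applied to a solution and a \emph{subsolution}, produces the a priori lower bound: Lemma~\ref{lem: subsolution} with $\eta=V_\theta$ and $g=f$ provides $w_0\in\PSH(\theta)$ with $V_\theta\le w_0\le V_\theta+1$ and $(\theta+\ddc w_0)^n\ge m\,f\,dV$ on $\Om$, where $m=m(p,X,\om_X,\theta)>0$; then $w:=w_0+a$ with $a:=\min\{0,\la^{-1}\log m-1\}$ satisfies $w\le 1+a$, hence $(\theta+\ddc w)^n\ge m\,f\,dV\ge e^{\la w}f\,dV$ on $\Om$, i.e. $w$ is a subsolution, and running the domination-principle argument on the sets $\{\f_\la<w-\e\}$ (after a further downward shift of $\rho$, using that the solution constructed below has minimal singularities) gives $\f_\la\ge w\ge V_\theta+a$.

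\emph{Existence and a priori upper bound.} Proceeding as in the proof of Proposition~\ref{prop:Fuj}, pass to a modification $\pi\colon Y\to X$ and a decomposition $\{\pi^\star\theta\}=\{\om_Y\}+\{E\}$ with $\om_Y>0$ a Hermitian form on $Y$ and $E$ an effective $\Q$-divisor; pulled back to $\Om$, the equation becomes a complex Monge--Amp\`ere equation in the big Hermitian class $\{\om_Y\}$ with density $g\,dV_Y$, where $0\le g\in L^p(Y)$, $g>0$ almost everywhere, and $\|g\|_p$ is controlled by $\|f\|_p$. Approximating $g$ by smooth positive densities, the continuity method (\cite{Cher87,TW10} in the Hermitian case, extended to the big Hermitian case as in \cite{GL23}) — in which the exponential enters with the favourable sign, so that openness and closedness are governed by the a priori estimates — together with the $L^p$ estimate of \cite{GL23} solves the equation, and yields $\sup_Y|u|\le C$ for a solution $u$ with $C$ depending only on $(Y,\om_Y),\ p,\ \|g\|_p$. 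Descending to $X$ produces $\f_\la\in\PSH(\theta)$ solving our equation with $\f_\la=V_\theta+O(1)$, to which the lower bound of the previous step applies; the upper bound $\f_\la\le V_\theta+C$ then follows from this estimate, via the identity $\int_\Om(\theta+\ddc\f_\la)^n=\int_Xe^{\la\f_\la}f\,dV$ and the finiteness of the Monge--Amp\`ere mass of minimal-singularities currents in the big class $\{\theta\}$ (a smooth computation on a resolution, cf.~\cite{GL22,GL23}).

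\emph{General $0\le f\in L^p$ with $\|f\|_p>0$.} Choose $f_j\in C^\infty$, $f_j>0$, with $f_j\to f$ in $L^p$ and $\|f_j\|_p$ bounded, and let $\f_j$ be the solution for $f_j$ given above. The uniform estimates give $V_\theta-C\le\f_j\le V_\theta+C$ with $C$ independent of $j$, so along a subsequence $\f_j\to\f_\la$ in $L^1$ with $\f_\la\in\PSH(\theta)$ and $V_\theta-C\le\f_\la\le V_\theta+C$; since the $\f_j$ are locally uniformly bounded on $\Om$, this convergence is in capacity there. Passing to the limit in the equation on $\Om$ — using Lemma~\ref{lem:capcont} (or Proposition~\ref{prop:lscmass}) on the left-hand side, and $e^{\la\f_j}f_j\to e^{\la\f_\la}f$ in $L^1$ (as $e^{\la\f_j}\le e^{\la C}$ and $\f_j\to\f_\la$ in $L^1$) on the right — gives $(\theta+\ddc\f_\la)^n=e^{\la\f_\la}f\,dV$ on $\Om$. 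Combined with uniqueness and the two a priori bounds, and after restoring the normalisation, this is the assertion.

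\emph{Main obstacle.} The crux is the uniform a priori estimate $\sup_X|\f_\la-V_\theta|\le C(X,\om_X,\theta,p,\|f\|_p,\la)$. The lower bound is precisely what the explicit subsolution of Lemma~\ref{lem: subsolution} buys, via the domination principle; the upper bound is more delicate and is what forces the reduction to a Hermitian big class on a modification and the use of a Kołodziej-type $L^p$ estimate (elaborating on \cite{GL23}), together with the finiteness of the volume of $\{\theta\}$. A secondary but genuine difficulty is to verify that the $L^1$-limit obtained for a general $L^p$ density actually solves the equation: this relies on the uniform control of singularities — all approximate solutions lying in the single singularity class of $V_\theta$ — in order to upgrade weak convergence to convergence in capacity on $\Om$.
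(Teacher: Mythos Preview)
Your proposal has two genuine gaps.

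\textbf{Wrong singularity type.} In your existence step you pass to a \emph{single} Fujita-type modification $\pi\colon Y\to X$ with $\{\pi^\star\theta\}=\{\om_Y\}+\{E\}$, solve on $Y$ in the Hermitian class $\{\om_Y\}$, and then ``descend to $X$''. But the bounded solution on $Y$, once descended, has precisely the analytic singularities encoded by $E$ (equivalently, those of $\rho$), \emph{not} the minimal singularities of $V_\theta$. In general $V_\theta$ is strictly less singular than any $\theta$-psh function with analytic singularities, so your claim ``$\f_\la=V_\theta+O(1)$'' is unjustified; what you actually obtain is a solution in a fixed analytic singularity class. The paper addresses exactly this point: it approximates $V_\theta$ by a \emph{sequence} $\psi_j$ of $\theta$-psh functions with analytic singularities (via Demailly regularization, arranged so that $\psi_j\ge\rho$), solves at each level $j$ using~\cite[Theorem~3.4]{GL23} to get $u_j=\psi_j+O(1)$ with $(\theta+\ddc u_j)^n=e^{u_j}f\,dV$, obtains the uniform lower bound $u_j\ge\psi_j-C'$ from Lemma~\ref{lem: subsolution} applied with $\eta=\psi_j$, and only then passes to the limit $j\to\infty$. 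Your single-modification approach produces one of these $u_j$, not the minimal-singularity solution.

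\textbf{Passage to the limit.} You assert that ``since the $\f_j$ are locally uniformly bounded on $\Om$, this convergence is in capacity there'', and then invoke Lemma~\ref{lem:capcont}. This implication is false: $L^1$-convergence of uniformly bounded quasi-psh functions does not imply convergence in capacity, nor weak convergence of Monge--Amp\`ere measures. The paper circumvents this via an envelope trick: from the sequence $(u_j)$ it forms the monotone sequences
\[
\tau_j:=\Big(\sup_{k\ge j}u_k\Big)^\star\searrow u,\qquad \f_j:=\env_\theta^\star\Big(\inf_{k\ge j}u_k\Big)\nearrow\f\le u,
\]
applies the maximum principle (Proposition~\ref{prop:maxprinciple}) and the minimum principle (Corollary~\ref{cor:minprinciple}) to obtain
\[
(\theta+\ddc\tau_j)^n\ge e^{\tau_j}f\,dV,\qquad (\theta+\ddc\f_j)^n\le e^{\f_j}f\,dV,
\]
passes to the limit along these \emph{monotone} sequences (where Monge--Amp\`ere continuity is available), and concludes $\f=u$ via the domination principle, hence $(\theta+\ddc u)^n=e^{u}f\,dV$. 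This sup/inf-envelope mechanism is the missing idea in your argument, and it simultaneously handles the limit in $j$ without any separate approximation of $f$ by smooth densities.
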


\begin{proof} Uniqueness follows from the domination principle. To simplify the notation, we assume that $\lambda=1$. 
To prove the existence,
	we approximate $V_{\theta}$ by a quasi-decreasing sequence $(\psi_j)$ of $\theta$-psh functions with analytic singularities:
 we first apply Demailly's approximation (Theorem \ref{thm:Demreg})
  to get a decreasing sequence $\tilde{\psi}_j \in \PSH(X,\theta+2^{-j}\omega)$ converging to $V_{\theta}$,  and we then take 
	\[
	\psi_j = (1-2^{-j})\tilde{\psi}_j + 2^{-j}\rho.
	\] 
	Observe that 
	$
	\psi_j \geq (1-2^{-j})V_{\theta} + 2^{-j} \rho \geq \rho,
	$
	thus 
	\[
	\env_\theta^\star\left(\inf_{k\geq j} \psi_k\right) \geq  (1-2^{-j})V_{\theta} + 2^{-j} \rho \nearrow V_{\theta} \; \text{as}\; j \to +\infty. 
	\]
	
	Resolving the singularities of $\p_j$ and arguing as in the proof of Theorem~\ref{thm:bigvol}, one deduces from~\cite[Theorem 3.4]{GL23} the existence of 
	$u_j \in \PSH(X,\theta)$ such that $u_j=\psi_j+O(1)$ and 
	\[
	(\theta+dd^c u_j)^n = e^{u_j} fdV. 
	\]
	By Lemma \ref{lem: subsolution} there exists $v_j\in \PSH(X,\theta)$ with $\psi_j-1\leq v_j \leq \psi_j\leq 0$ such that 
	\[
	(\theta+dd^c v_j)^n \geq m fdV \geq  e^{v_j+ \log m} f dV.
	\]
	The domination principle thus yields a uniform constant $C'$ such that
	\[
	\psi_j-C' \leq v_j-C'+1 \leq u_j.
	\] 
	Extracting a subsequence, we can assume that $u_j \to u \in \PSH(X,\theta)$ almost everywhere on $X$.  
	The above estimates yield  uniform bounds $V_{\theta}-C' \leq u \leq V_{\theta}$.

	We next consider the  sequences 
	\[
	\varphi_j = \env_\theta^\star\left(\inf_{k\geq j} u_k\right)
	\; \; \text{ and } \; \;  \tau_j = \mathrm{sup}^\star_{k\geq j} u_k. 
	\]
	These functions are locally bounded in $\Omega$
	since 
	\[
	\min\{\varphi_j,\tau_j\} \geq (1-2^{-j})V_{\theta}+2^{-j}\rho -C' \geq \rho-C'.
	\] 
	Note that $\tau_j$ decreases pointwise to $u$, while $\f_j$ increases (pointwise outside a pluripolar set)
	to some function $\f \in \PSH(X,\theta)$.
		The maximum principle moreover yields, in $\Omega$,
	\[
	(\theta+dd^c \varphi_j)^n \leq e^{\varphi_j}  f dV,
	\]
	and 
	\[ 
	(\theta+dd^c \tau_j)^n \geq e^{\tau_j} f dV.
	\]
	Letting $j \to +\infty$, we obtain $\varphi\leq u$, $\f=u+O(1)$, and 
	\[
	(\theta+dd^c \varphi)^n \leq e^{\varphi}  f dV,
	\; \; \text{ while }\; \;  
	(\theta+dd^c u)^n \geq e^{ u}  f dV.
	\]
	The domination principle ensures $\varphi=u$, hence
	$
	(\theta+dd^c u)^n=e^{u}fdV.
	$

\end{proof}

\begin{thm}
Assume $0 \leq f \in L^p(X)$ with $p>1$ and $\|f\|_p>0$.  Then there exists $(\varphi,c)\in \PSH(X,\theta)\times (0,+\infty)$ 
such that $V_{\theta} -C \leq \varphi \leq V_{\theta}$ and 
	$$
	(\theta+dd^c \varphi)^n = c fdV
 $$
	on $\Om$, where
	\begin{itemize}
	\item the constant $c>0$ is uniquely determined by $f,X,\theta$, and
	\item $C>0$ is a uniform constant that only depends on $(X,\omega_X)$, $\theta$, $p$ and $\|f\|_p$. 
	\end{itemize}  
\end{thm}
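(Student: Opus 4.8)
The plan is to let $\lambda\to0^+$ in Theorem~\ref{thm:MAAY}. For $\lambda>0$ that theorem produces $\varphi_\lambda\in\PSH(\theta)$ with $\varphi_\lambda=V_\theta+O(1)$ and $(\theta+dd^c\varphi_\lambda)^n=e^{\lambda\varphi_\lambda}f\,dV$ on $\Omega$; its proof moreover gives $\varphi_\lambda\le V_\theta$. Renormalizing by $v_\lambda:=\varphi_\lambda-\sup_X\varphi_\lambda$, so that $\sup_X v_\lambda=0$ and hence $v_\lambda\le V_\theta$, the equation becomes
$$
(\theta+dd^c v_\lambda)^n=c_\lambda\,e^{\lambda v_\lambda}f\,dV\quad\text{on }\Omega,\qquad c_\lambda:=e^{\lambda\sup_X\varphi_\lambda}\le 1,
$$
the bound $c_\lambda\le 1$ coming from $\varphi_\lambda\le V_\theta\le 0$. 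The idea is that on any compact $K\subset\Omega$ on which the $v_\lambda$ are uniformly bounded, $e^{\lambda v_\lambda}\to 1$ uniformly, so that a limit of the $v_\lambda$ will solve $(\theta+dd^c\varphi)^n=cf\,dV$ there.

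The heart of the matter is a uniform (in $\lambda\in(0,1]$) estimate $0<c_1\le c_\lambda$ and $V_\theta-C\le v_\lambda\le V_\theta$, with $c_1,C$ depending only on $(X,\omega_X),\theta,p,\|f\|_p$. For the lower bound on $c_\lambda$: applying Lemma~\ref{lem: subsolution} with $\eta=V_\theta$ and $g=f/\|f\|_q$ for some $q\in(1,p)$ yields $w\in\PSH(\theta)$ with $V_\theta\le w\le V_\theta+1$ and $(\theta+dd^c w)^n\ge m'f\,dV$ on $\Omega$, where $m'=m'(X,\omega_X,\theta,p,\|f\|_p)>0$. If $c_\lambda<m'/e$, then on $\{v_\lambda<w\}$ one has $e^{\lambda v_\lambda}<e^{\lambda w}\le e$, hence $(\theta+dd^c v_\lambda)^n\le\tfrac{c_\lambda e}{m'}(\theta+dd^c w)^n$ with constant $<1$; the domination principle (the lemma following Lemma~\ref{lem: CP via envelopes}) then forces $v_\lambda\ge w\ge V_\theta\ge v_\lambda$, so $v_\lambda=V_\theta=w$, and then $m'f\,dV\le(\theta+dd^c V_\theta)^n=c_\lambda e^{\lambda V_\theta}f\,dV<m'f\,dV$ on $\Omega$, a contradiction; thus $c_\lambda\ge m'/e=:c_1$. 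With $c_\lambda\le1$ in hand, the equation reads $(\theta+dd^c v_\lambda)^n=F_\lambda\,dV$ with $\|F_\lambda\|_p=\|c_\lambda e^{\lambda v_\lambda}f\|_p\le\|f\|_p$, and the uniform $L^p$-a priori estimate for Monge--Amp\`ere equations in the big class $\{\theta\}$---obtained, following~\cite{GL23}, by Ko{\l}odziej's method on a resolution of $\{\theta\}$ as in the proof of Theorem~\ref{thm:bigvol}---gives $v_\lambda\ge V_\theta-C$ with $C$ uniform.

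Granting these estimates, $\{v_\lambda\}$ lies between the fixed $L^1$-functions $V_\theta-C$ and $V_\theta$, hence is relatively compact in $L^1(X)$; I would extract $\lambda_j\to0$ with $c_{\lambda_j}\to c\in[c_1,1]$ and $v_{\lambda_j}\to\varphi$ in $L^1$ (and a.e.), so that $\varphi\in\PSH(\theta)$ and $V_\theta-C\le\varphi\le V_\theta$. On each compact $K\subset\Omega$ the $v_{\lambda_j}$ are uniformly bounded, so $c_{\lambda_j}e^{\lambda_j v_{\lambda_j}}f\to cf$ uniformly on $K$; running the sandwich $\varphi_j^-:=\env_\theta^\star\big(\inf_{k\ge j}v_{\lambda_k}\big)\nearrow$ and $\varphi_j^+:=\big(\sup_{k\ge j}v_{\lambda_k}\big)^\star\searrow$, together with the maximum principle (Proposition~\ref{prop:maxprinciple}), the domination principle, and continuity of Bedford--Taylor products along monotone sequences---exactly as in the proof of Theorem~\ref{thm:MAAY}---upgrades this to $(\theta+dd^c v_{\lambda_j})^n\to(\theta+dd^c\varphi)^n$ on $\Omega$, whence $(\theta+dd^c\varphi)^n=cf\,dV$ on $\Omega$ with $c\ge c_1>0$. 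For the uniqueness of $c$: if $(\varphi,c)$ and $(\varphi',c')$ both solve with $c\le c'$, then $(\theta+dd^c\varphi)^n=(c/c')(\theta+dd^c\varphi')^n$ with $c/c'\le1$, while $\varphi-\varphi'$ is bounded and $\min\{\varphi,\varphi'\}\ge\rho-C$; Corollary~\ref{cor:MAcst} gives $c/c'\ge1$, \ie $c=c'$.

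The main obstacle is the uniform-in-$\lambda$ a priori estimate: the estimate that Theorem~\ref{thm:MAAY} provides directly has a constant blowing up as $\lambda\to0$, so the whole point is to renormalize and to produce $\lambda$-free bounds by combining the subsolution of Lemma~\ref{lem: subsolution}, the domination principle, and Ko{\l}odziej's $L^p$-estimate on a Fujita-type resolution; once that is secured, the extraction of a limit, the monotone-envelope stability of the Monge--Amp\`ere operator, and the application of Corollary~\ref{cor:MAcst} for the uniqueness (and positivity) of $c$ are routine.
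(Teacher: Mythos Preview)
Your overall strategy coincides with the paper's: let $\lambda\to 0$ in Theorem~\ref{thm:MAAY}, renormalize by $v_\lambda=\varphi_\lambda-\sup_X\varphi_\lambda$, secure $\lambda$-free bounds on $c_\lambda$ and on $v_\lambda-V_\theta$, and pass to the limit via the envelope/sup sandwich; the uniqueness of $c$ via Corollary~\ref{cor:MAcst} is also exactly the paper's argument. Your bound $c_\lambda\le 1$ from $\varphi_\lambda\le V_\theta\le 0$ is in fact simpler than the paper's Gauduchon-metric argument, and your contradiction argument for $c_\lambda\ge c_1$ is a valid alternative.

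There is, however, a real gap in your limiting step. You claim the sandwich $\varphi_j^-\nearrow$, $\varphi_j^+\searrow$ closes ``exactly as in the proof of Theorem~\ref{thm:MAAY}'', but that proof relies essentially on the exponential factor in the equation $(\theta+dd^c u)^n=e^{u}f\,dV$: in the limit one gets $(\theta+dd^c\varphi^-)^n\le e^{\varphi^-}f\,dV$ and $(\theta+dd^c\varphi^+)^n\ge e^{\varphi^+}f\,dV$, and on $\{\varphi^-<\varphi^+-\delta\}$ the ratio is $\le e^{-\delta}<1$, so the domination principle bites. In your situation the twist $e^{\lambda_k v_{\lambda_k}}$ disappears as $\lambda_k\to 0$, leaving only $(\theta+dd^c\varphi^-)^n\le cf\,dV\le(\theta+dd^c\varphi^+)^n$ with ratio $\le 1$; since uniqueness for the untwisted equation is open in the Hermitian big setting (as the paper itself notes), the domination principle as stated does not force $\varphi^-=\varphi^+$. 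The paper repairs this with a Skoda trick: fixing $\epsilon>0$ small so that $e^{-\epsilon v_j}f$ stays uniformly in some $L^q$, one feeds $g=e^{-\epsilon v_j}f$ into Lemma~\ref{lem: subsolution}. This single step delivers both the uniform lower bound $v_j\ge V_\theta-C$ (so your appeal to an external Ko\l odziej-type estimate on a resolution, which would in any case need separate justification here, becomes unnecessary) and, crucially, a surviving factor $e^{\epsilon(\varphi^--v)}$ in the limiting inequality, which is exactly what makes the final application of the domination principle go through.
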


When $X$ is K\"ahler and $\theta$ is closed, this is \cite[Theorem B]{BEGZ10}.
When  $\theta>0$ is a Hermitian form, this result is a combination of 
\cite{TW10}
and 
\cite{DK12,KN15}.
When the class $\{\theta\} \in BC^{1,1}(X)$ is nef, one can further show the existence of a solution which is smooth
in $\Omega$ (see \cite[Theorem B]{GL23} and \cite[Theorem C]{Dang24}). 
The uniqueness of $\f$ up to an additive constant is largely open 
(see however \cite[Theorem A]{KN19}).

\begin{proof}
The uniqueness of the constant $c$ follows from the domination principle (see Corollary~\ref{cor:MAcst}). 
For each $j\in \mathbb N^*$ we use Theorem \ref{thm:MAAY} to find
$u_j \in \PSH(X,\theta)$ such that $u_j -V_{\theta}$ is bounded  and 
\[
(\theta+dd^c u_j)^n = e^{j^{-1} u_j} fdV.
\] 
Setting $v_j: = u_j-\sup_X u_j$ and $f_j: = e^{j^{-1} v_j} f$,
the above equation becomes 
\[
(\theta+dd^c v_j)^n = c_j f_jdV, 
\; \text{ where } \;
 c_j = e^{j^{-1}\sup_X u_j}.
\] 
Extracting and relabelling we can assume that $v_j \to v\in \PSH(X,\theta)$ in $L^1$ and
almost everywhere, hence $e^{j^{-1}v_j}  \to 1$ in $L^1(X,dV)$. 

Pick a Gauduchon metric $\omega_G$ on $X$ and write $\omega_G^n = hdV$. The mixed Monge-Amp\`ere inequality gives 
\[
(\theta+dd^c u_j) \wedge \omega_G^{n-1} \geq e^{j^{-1}v_j/n} c_j^{1/n} f^{1/n}h^{1-1/n} dV.  
\]
Integrating this inequality over $X$ yields an upper bound for $c_j$.

	Fix a small constant $\e>0$ such that $e^{-\e v_j} f$ is uniformly in $L^q(X,dV)$ for some $1<q<p$. This can be done thanks to the uniform version of Skoda's integrability theorem (see \cite{GZbook}, \cite{Zer01}). Using Lemma  \ref{lem: subsolution}, we can find $\eta_j \in \PSH(X,\theta)$ with $V_{\theta}-1\leq \eta_j\leq V_{\theta}$ such that 
	\[
	(\theta+dd^c \eta_j)^n \geq mfe^{-\e v_j}dV \geq mfe^{\e(\eta_j- v_j)}dV. 
	\]
	The domination principle yields $m\leq c_j$ and $v_j \geq \eta_j + \log m \geq V_{\theta}-C$, for a uniform constant $C>0$.  Extracting a subsequence, we can assume $c_j\to c>0$. 
	
		Consider the following sequences 
	\[
	\varphi_j = \env_\theta^\star\left(\inf_{k\geq j} v_k\right), \; \tau_j =\mathrm{sup}^\star_{k\geq j} v_k.
	\]
	Then $\min\{\varphi_j,\tau_j\} \geq (1-2^{-j})V_{\theta}+2^{-j}\rho \geq \rho$. In particular, these functions are locally bounded in $\Omega$. 
	By the maximum principle and the minimum principle, we have
	\[
	(\theta+dd^c \varphi_j)^n \leq e^{\e(\varphi_j- \inf_{k\geq j}v_k)} \sup_{k\geq j} (c_kf_k) dV,
	\]
	and 
	$$
	(\theta+dd^c \tau_j)^n \geq e^{\e(\tau_j- \sup_{k\geq j}v_k)} \inf_{k\geq j} (c_kf_k) dV,
	$$
	in $\Omega$. 
	Letting $j \to +\infty$, we obtain $V_{\theta} -C\leq \varphi\leq v\leq V_{\theta}$, while
	\[
	(\theta+dd^c \varphi)^n \leq e^{\e(\varphi-v)}  cf dV
	\; \; \text{ and }\; \;
	 (\theta+dd^c v)^n \geq e^{\e(v- v)}  cf dV.
	\] 
	The domination principle thus gives $\varphi=v$ and 
$(\theta+dd^c v)^n=cfdV$,
	finishing the proof.  
\end{proof}

\end{document}